\newtheorem{theorem}{Theorem}[section]
\newtheorem{proposition}[theorem]{Proposition}
\newtheorem{lemma}[theorem]{Lemma}
\newtheorem{corollary}[theorem]{Corollary}
\newtheorem{definition}[theorem]{Definition}
\newtheorem{example}[theorem]{Example}
\newcommand{\mdeg}{\mathrm{mdeg}}
\newcommand{\A}{\mathbf{A}}
\newcommand{\B}{\mathbf{B}}
\newcommand{\x}{\mathbf{X}}
\newcommand{\y}{\mathbf{Y}}
\newcommand{\F}{\mathbb F}
\newcommand{\N}{\mathbb N}
\newcommand{\lc}{\mathrm{LC}}
\newcommand{\lt}{\mathrm{LT}}
\newcommand{\lm}{\mathrm{LM}}
\newcommand{\GL}{\mathrm{GL}}
\newcommand{\ord}{\mathrm{ord}}
\newcommand{\doublespace}
\begin{document}

\begin{frontmatter}

\title{A group action on multivariate polynomials over finite fields}
\author{Lucas Reis \fnref{fn1}}
\ead{lucasreismat@gmail.com}
\fntext[fn1]{Permanent address: Departamento de Matem\'{a}tica, Universidade Federal de Minas Gerais, UFMG, Belo Horizonte, MG, 30123-970, Brazil}
\address{School of Mathematics and Statistics, Carleton University, 1125 Colonel By Drive, Ottawa ON (Canada), K1S 5B6}

\begin{abstract}
Let $\F_q$ be the finite field with $q$ elements, where $q$ is a power of a prime $p$. Recently, a particular action of the group $\GL_2(\F_q)$ on irreducible polynomials in $\F_q[x]$ has been introduced and many questions concerning the invariant polynomials have been discussed. In this paper, we give a natural extension of this action on the polynomial ring $\F_q[x_1, \ldots, x_n]$ and study the algebraic properties of the invariant elements.
\end{abstract}

\begin{keyword}
{Finite fields, Invariant theory, Group action}

2010 MSC: 12E20 \sep 11T55
\end{keyword}
\end{frontmatter}

\section{Introduction}
Let $\F_q$ be a finite field with $q$ elements, where $q$ is a power of a prime $p$. Given $A\in \GL_2(\F_q)$, the matrix $A$ induces a natural map on $\F_q[x]$. Namely, if we write
$A=
\left(\begin{matrix}
a&b\\
c&d
\end{matrix}\right),
$
given $f(x)$ of degree $n$ we define $$A\diamond f=(cx+d)^nf\left( \frac{ax+b}{cx+d}\right).$$

It turns out that, when restricted to the set $I_n$ of irreducible polynomials of degree $n$ (for $n\ge 2$), this map is a permutation of $I_n$ and, more than that, $\GL_2(\F_q)$ acts on $I_n$ via the compositions $A\diamond f$. This was first noticed by Garefalakis in \cite{Gar11}. Recently, this action (and others related) has attracted attention from several authors (see \cite{ST12}, \cite{LR17} and \cite{Kap17}), and some fundamental questions have been discussed such as the characterization and number of invariant  irreducible polynomials of a given degree. The map induced by $A$ preserves the degree of elements in $I_n$ (for $n\ge 2$), but not in the whole ring $\F_q[x]$: for instance, $A=
\left(\begin{matrix}
1&1\\
1&0
\end{matrix}\right)
$ is such that $A\diamond (x^n-1)=(x+1)^n-x^n$ has degree at most $n-1$. However, if the ``denominator'' $cx+d$ is trivial, i.e., $c=0$ and $d=1$, the map induced by $A$ preserves the degree of any polynomial and, more than that, is an $\F_q$-automorphism of $\F_q[x]$. This motivates us to introduce the following: let $\mathcal A_n:=\F_q[x_1,\ldots, x_n]$ be the ring of polynomials in $n$ variables over $\F_q$ and $G$ be the subgroup of $\GL_2(\F_q)$ comprising the elements of the form
$$A=
\left(\begin{matrix}a&b\\0&1\end{matrix}\right).
$$
The set $G^n:=\underbrace{G\times \cdots \times G}_{n\,\mathrm{times}}$, equipped with the coordinate-wise product induced by $G$, is a group. The group $G^n$ induces $\F_q$-endomorphisms of $\mathcal A_n$; given $\A\in G^n$, $\A=(A_1, \ldots, A_n)$, where $A_i=\left(\begin{matrix}a_i&b_i\\0&1\end{matrix}\right)$, and $f\in \mathcal A_n$, we define
$$\A\circ f:=f(A_1 \circ x_1, \ldots, A_n\circ x_n)= f(a_1x_1+b_1, \ldots, a_nx_n+b_n)\in \mathcal A_n.$$
In other words, $\A$ induces the $\F_q$-endomorphism of $\mathcal A_n$ given by the substitutions $x_i\mapsto a_ix_i+b_i$. We will show that this map induced by $\A$ is an $\F_q$-automorphism of $\mathcal A_n$ and, in fact, this is an action of $G^n$ on the ring $\mathcal A_n$, such that $\A \circ f$ and $f$ have the same {\it multidegree} (a natural extension of degree in several variables). It is then natural to explore the algebraic properties of the fixed elements. We define $R_{\A}$ as the subring of $\mathcal A_n$ comprising the polynomials invariant by $\A$, i.e.,

$$R_{\A}:=\{f\in \mathcal A_n\, |\, \A\circ f=f \}.$$

The ring $R_{\A}$ is frequently called the {\it fixed-point} subring of $\mathcal A_n$ by $\A$. The study of the fixed-point subring plays an important role in the {\it Invariant Theory of Polynomials}. Note that $R_{\A}$ is an $\F_q$-algebra and then some interesting theoretical questions arise:

\begin{enumerate}[$\bullet$]
\item Is $R_{\A}$ a finitely generated $\F_q$-algebra? If yes, can we find a minimal set $S$ of generators? What about the size of $S$?

\item Is $R_{\A}$ a free $\F_q$-algebra? That is, can $R_{\A}$ be viewed as a polynomial ring in some number of variables?
\end{enumerate}

Any polynomial is invariant by $\A$ if and only if is invariant by any element of the group $\langle \A\rangle$ generated by $\A$ in $G^n$. In particular, we can explore the fixed-point subring for any subgroup $H$ of $G^n$.

For $n=1$, the equality $\A\circ f=f$ becomes $f(x)=f(ax+b)$ for some $a\in \F_q^*$ and $b\in \F_q$. In other words, we are taking the substitution $x\mapsto ax+b$. It turns out that, with an affine change of variable, we are able to reduce to the cases of translations $x\mapsto x+b$ and the homotheties $x\mapsto ax$. In those cases, the fixed-point subring is well understood and we can easily answer the questions above (see Theorems 2.5 and 3.1 of \cite{LR17}).

In this paper we discuss those questions for any $n\ge 1$. We show that $R_{\A}$ is always a finitely generated $\F_q$-algebra, we find explicitly a minimal set of generators $S$ and show that the size of such $S$ is related to the number of some special minimal product-one sequences in the multiplicative group $\F_{q}^*$. Also, we give necessary and sufficient conditions on the element $\A$ for $R_{\A}$ to be a free $\F_q$-algebra. 

The paper is structured as follows. In Section 2 we recall some basic theory of multivariate polynomials over commutative rings and present some preliminary results. Section 3 is devoted to characterize the ring $R_{\A}$ and find a minimal set $S$ of generators. In Section 4 we find estimates for the size of $S$ and, in Section 5, we study the fixed-point subring by the action of $H$ of $G^n$, where $H$ is any Sylow subgroup of $G^n$. Finally, in Section 6, we consider the subgroup $\mathcal H$ of $G^2$, comprising the elements whose coordinates are all diagonal matrices; in this case, we obtain an alternative characterization of the fixed elements of $\F_q[x, y]$, looking at its homogeneous components. In particular, we obtain a correspondence between the homogeneous that are fixed by elements of $\mathcal H$ and the univariate polynomials through the action studied in \cite{Gar11}. In fact, $\mathcal H$ can be viewed as a subgroup of $\GL_2(\F_q)$ and we show that this correspondence extends in a more general action of $\GL_2(\F_q)$ on $\F_q[x, y]$. 

\section{Preliminaries }
Throughout this paper, $\F_q$ denotes the finite field with $q$ elements, where $q$ is a power of a prime $p$ and $\mathcal A_n:=\F_q[x_1, \ldots, x_n]$ denotes the ring of polynomials in $n$ variables over $\F_q$. Also, for elements $a\in \F_q^*$, $A\in \GL_2(\F_q)$ and $\A\in G^n$, we denote by $\ord(a)$, $\ord(A)$ and $\ord(\A)$ the multiplicative orders of $a$, $A$ and $\A$, respectively.

As mentioned before, the univariate polynomials that remains invariant by the substitution $x\mapsto ax+b$ are well described and, for completeness, we state the results:
\begin{theorem}\label{LR}
Suppose that $f(x)$ is a polynomial in $\F_q$. Then the following hold:
\begin{enumerate}[(i)]
\item $f(x+b)=f(x)$ if and only if $f(x)=g(x^p-b^{p-1}x)$ for some $g(x)\in \F_q[x]$.
\item $f(ax)=f(x)$ if and only if $f(x)=g(x^k)$ for some $g(x)\in \F_q[x]$, where $k=\ord(a)$.
\end{enumerate}
\end{theorem}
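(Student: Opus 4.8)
The plan is to prove the two statements separately, since each reduces to a transparent structural property of $\F_q[x]$.

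\emph{Part (ii)} I would settle by comparing coefficients. Writing $f(x)=\sum_i c_i x^i$ gives $f(ax)=\sum_i c_i a^i x^i$, so $f(ax)=f(x)$ is equivalent to $c_i(a^i-1)=0$ for every $i$. Since $\ord(a)=k$, we have $a^i=1$ exactly when $k\mid i$, hence $c_i=0$ whenever $k\nmid i$; this says precisely that $f(x)=g(x^k)$ with $g(x)=\sum_j c_{jk}x^j$, and the converse is immediate from $a^k=1$. There is no real difficulty here.

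\emph{Part (i)} is the substantive one, and its natural centerpiece is the polynomial $L(x):=x^p-b^{p-1}x$, where we may assume $b\neq 0$ (for $b=0$ the left-hand identity is vacuous and this case is excluded by the reduction to genuine translations). Two elementary observations drive the argument: $L$ is additive, $L(x+y)=L(x)+L(y)$, and $L(b)=b^p-b^p=0$; together they give $L(x+b)=L(x)$, so every $f$ of the form $g(L(x))$ satisfies $f(x+b)=f(x)$, which is the ``if'' direction. (Equivalently, substituting $x=by$ rewrites $L(x)=b^p\bigl((x/b)^p-(x/b)\bigr)$, reducing everything to the classical Artin-Schreier fact $h(y+1)=h(y)\iff h(y)=g(y^p-y)$.) For the ``only if'' direction I would use that $L$ is monic of degree $p$, so $\F_q[x]$ is free over $\F_q[L(x)]$ with basis $1,x,\dots,x^{p-1}$: every $f$ has a unique expansion $f=\sum_{i=0}^{p-1}g_i(L(x))\,x^i$. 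Applying the difference operator $\Delta h:=h(x+b)-h(x)$, which annihilates each $g_i(L(x))$ because $L(x+b)=L(x)$, yields $\Delta f=\sum_{i=1}^{p-1}g_i(L(x))\,\Delta(x^i)$, where $\Delta(x^i)=(x+b)^i-x^i$ has degree exactly $i-1$ since $i\not\equiv 0\pmod p$ for $1\le i\le p-1$. Consequently, if some $g_i\neq 0$ with $i\ge 1$, the summand $g_i(L(x))\Delta(x^i)$ has degree $p\deg g_i+(i-1)$, and these degrees are pairwise distinct across $i$ because they are pairwise distinct modulo $p$. Hence no cancellation of leading terms can occur, so $\Delta f=0$ forces $g_1=\dots=g_{p-1}=0$ and $f=g_0(L(x))$.

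I expect the main obstacle to be exactly this last step: the characteristic-$p$ bookkeeping showing that the translation-invariant polynomials are precisely the $\F_q[L(x)]$-submodule generated by $1$, with the ``distinct degrees modulo $p$'' observation doing the essential work. A Galois-theoretic route is also available --- the fixed field of $x\mapsto x+b$ acting on $\F_q(x)$ is $\F_q(L(x))$ because $[\F_q(x):\F_q(L(x))]=\deg L=p$, and a polynomial lying in $\F_q(L(x))$ must already be a polynomial in $L(x)$ --- but it demands the same care when descending from rational functions back to polynomials.
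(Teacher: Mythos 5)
Your proof is correct. Note that the paper itself contains no proof of this statement: it is quoted from the reference \cite{LR17} (Theorems 2.5 and 3.1), so there is no in-paper argument to compare against. On its own merits your argument is complete and self-contained: part (ii) is the standard coefficient comparison; in part (i) the ``if'' direction follows from the additivity of $L(x)=x^p-b^{p-1}x$ together with $L(b)=0$, and the ``only if'' direction is carried by the free $\F_q[L(x)]$-module decomposition $f=\sum_{i=0}^{p-1}g_i(L(x))x^i$ plus the observation that the degrees $p\deg g_i+(i-1)$ of the surviving summands of $\Delta f$ are pairwise distinct modulo $p$, so no cancellation of leading terms can occur. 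Your remark that $b\neq 0$ must be assumed is also accurate --- as literally stated the claim fails for $b=0$ (take $f(x)=x$) --- and this is consistent with the paper's own reduction, which only ever applies the translation case to genuine translations.
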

For the proof of this result, see Theorems 2.5 and 3.1 of \cite{LR17}. The case $a\ne 1$ and $b\ne 0$ can be reduced to the case $b=0$. In fact, we have $f(ax+b)=f(x)$ if and only if $f_0(ax)=f_0(x)$, where $f_0(x)=f\left(x-\frac{b}{a-1}\right)$. This kind of trick will be used frequently to simplify some calculations.

From Theorem \ref{LR}, we can deduce that the fixed-point subrings are $$\F_q[x^p-b^{p-1}x]\quad\text{and}\quad\F_q[y^k],$$ where $y=x+\frac{b}{a-1}$ or $y=x$. Clearly these rings are finitely generated $\F_q$-algebras and isomorphic to $\F_q[z]$, the ring of univariate polynomials over $\F_q$. Also, we have explicitly a minimal set of generators $S$ with size one. As we will see, in general, the sets $S$ are more complex. 

We start with some basic theory on multivariate polynomials over commutative rings. For more details, see Chapter 2 of \cite{Cox}. Throughout this paper we always consider the {\it graded lexicographical} order in $\mathcal A_n$, denoted by $<_{\text{gradlex}}$, such that
$$x_1>x_2>\cdots>x_n.$$

For a given monomial in $\mathcal A_n$, say $x_1^{\alpha_1}\ldots x_n^{\alpha_n}$, we write $\x^{\alpha}$, where $\alpha=(\alpha_1, \ldots, \alpha_n)\in \N^n$. For convention, $x_i^{0}=1$.  Sometimes, we simply write $\x$ or $\y$ for a generic monomial in $\mathcal A_n$.

It turns out that the graded lex order is induced by the following ordering of the vectors $\alpha\in \N^n$: given two elements $\alpha=(\alpha_1, \ldots, \alpha_n)$ and $\alpha'=(\alpha_1', \ldots, \alpha_n')$ we say that $\alpha>\alpha'$ if $\sum \alpha_i>\sum \alpha_i'$ or $\sum \alpha_i=\sum \alpha_i'$ and the leftmost nonzero coordinate of the  difference vector $\alpha-\alpha'$ is positive. In this case, we write $\x^{\alpha}>\x^{\alpha'}$.

\begin{example}
Let $\F_q[x, y, z]$ be the ring of polynomials in three variables and $\x^{\alpha}=x^2y^3$, $\x^{\alpha'}=xy^2z^3$ and $\x^{\alpha''}=x^2y^2z^2$. Consider the graded lexicographical order such that $x>y>z$. We have $$x^2y^3<_{\mathrm{gradlex}}xy^2z^3<_{\mathrm{gradlex}} x^2y^2z^2.$$
\end{example}

Any nonzero polynomial $f\in \mathcal A_n$ can be written uniquely as $\sum_{\alpha\in B}a_{\alpha}\x^{\alpha}$ for some non zero elements $a_{\alpha}\in \F_q$ and a finite set $B$.

\begin{definition}
Let $f$ be any nonzero element of $\mathcal A_n$. The {\it multidegree} of $f$ is the maximum of $\alpha$ (with respect to the graded lexicographical order) such that $\alpha\in B$. For simplicity, we write $\alpha=\mdeg f$.
\end{definition}

\subsection{A natural action of $G^n$ over $\mathcal A_n$}
As follows, the compositions $\A\circ f$ have some basic properties.
\begin{lemma}\label{action}
Given $\A\in G^n$, $\A=(A_1, \ldots, A_n)$, where $A_i=\left(\begin{matrix}a_i&b_i\\0&1\end{matrix}\right)$, and $f, g\in \mathcal A_n$. The following hold:

\begin{enumerate}[a)]
\item For any non zero element $f\in \mathcal A_n$, the polynomials $f$ and $\A\circ f$ have the same multidegree.

\item If\, $\mathbf I$ denotes the identity element of $G^n$, $\mathbf I\circ f=f$.

\item $\A\circ (f\cdot g)=(\A\circ f)\cdot (\A\circ g)$. In particular, $\A\circ f$ is irreducible if and only if $f$ is irreducible.

\item Given $\A'\in G^n$, $\A'=(A_1', \ldots, A_n')$ and $A_i'=\left(\begin{matrix}a_i'&b_i'\\0&1\end{matrix}\right)$, we have
$$(\A'\A)\circ f=\A'\circ (\A\circ f).$$
In particular, the endomorphism induced by $\A$ is an $\F_q$-automorphism of $\mathcal A_n$, with inverse induced by $\A^{-1}$.

\item The automorphism induced by $\A$ is of finite order and its order coincides with the order of $\A$ in $G^n$.
\end{enumerate}
\end{lemma}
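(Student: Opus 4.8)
The plan is to prove the five items of Lemma~\ref{action} essentially in the order stated, since the later parts build on the earlier ones. For part (a), I would write a nonzero $f$ as $\sum_{\alpha\in B}a_\alpha\x^\alpha$ and observe that substituting $x_i\mapsto a_ix_i+b_i$ sends each monomial $\x^\alpha$ to $\prod_i(a_ix_i+b_i)^{\alpha_i}$, whose leading term (in $<_{\text{gradlex}}$) is $\left(\prod_i a_i^{\alpha_i}\right)\x^\alpha$ with \emph{nonzero} coefficient because each $a_i\in\F_q^*$. Since the total degree is unchanged and, among monomials of a fixed total degree, the graded lex order is just the lex order on exponent vectors, distinct $\x^\alpha$ produce distinct leading monomials; hence no cancellation can destroy the $<_{\text{gradlex}}$-largest one, and $\mdeg(\A\circ f)=\mdeg f$. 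This also shows $\A\circ f\neq 0$, which is needed for the map to even be well defined on nonzero polynomials and for the multiplicativity claim below.

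Parts (b) and (d) are formal. Item (b) is immediate since the identity of $G^n$ induces the substitutions $x_i\mapsto x_i$. For item (d), the key point is that $\A\circ f$ is obtained by applying the $\F_q$-algebra endomorphism $\varphi_\A\colon x_i\mapsto a_ix_i+b_i$, and composing two such substitutions corresponds to the product in $G$ coordinate-wise: if $A_i'A_i=\left(\begin{matrix}a_i'a_i & a_i'b_i+b_i'\\ 0 & 1\end{matrix}\right)$, then $\varphi_{\A'}\circ\varphi_{\A}$ sends $x_i\mapsto a_i'(a_ix_i+b_i)+b_i' = (a_i'a_i)x_i+(a_i'b_i+b_i')$, which is exactly $\varphi_{\A'\A}$. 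Because these are $\F_q$-algebra homomorphisms, it suffices to check agreement on the generators $x_i$, which is the displayed computation. Taking $\A'=\A^{-1}$ and using (b) then shows $\varphi_\A$ is invertible with inverse $\varphi_{\A^{-1}}$, so it is an $\F_q$-automorphism of $\mathcal A_n$.

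Part (c) follows because $\varphi_\A$ is a ring homomorphism: $\A\circ(fg)=\varphi_\A(fg)=\varphi_\A(f)\varphi_\A(g)=(\A\circ f)(\A\circ g)$. For the ``in particular'' statement, if $f$ is reducible, say $f=gh$ with $g,h$ nonconstant, then $\A\circ f=(\A\circ g)(\A\circ h)$ and by part (a) each factor has the same multidegree as the original, hence is still nonconstant, so $\A\circ f$ is reducible; the converse is the same argument applied to $\A^{-1}$ using part (d). Finally, for part (e), let $m=\ord(\A)$ in $G^n$ and let $N$ be the order of the automorphism $\varphi_\A$. From (d) and (b), $\varphi_\A^m=\varphi_{\A^m}=\varphi_{\mathbf I}=\mathrm{id}$, so $N\mid m$; conversely $\varphi_\A^N=\varphi_{\A^N}=\mathrm{id}$ forces $\A^N$ to induce the identity substitution, i.e.\ $a_i^N=1$ and (looking at the constant obtained from $x_i$) $b_i(a_i^{N-1}+\cdots+1)$ — more simply, evaluating $\varphi_{\A^N}(x_i)=x_i$ gives $\A^N=\mathbf I$, so $m\mid N$. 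Hence $N=m$.

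I do not expect any step to be a serious obstacle; the only place requiring genuine care is part (a), where one must be sure that passing from $\x^\alpha$ to $\prod_i(a_ix_i+b_i)^{\alpha_i}$ cannot create a new, larger leading monomial and that the leading coefficients $\prod_i a_i^{\alpha_i}$ cannot vanish or cancel. Both are handled by the observations that the substitution is degree-preserving on each monomial and that within a fixed total degree $<_{\text{gradlex}}$ orders exponent vectors injectively, so distinct input monomials of top total degree contribute distinct output leading monomials and the maximal one survives.
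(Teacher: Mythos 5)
Your proposal is correct and follows essentially the same route as the paper: reduce part (a) to monomials by linearity, note that $\A\circ\x^{\alpha}$ has leading term $\bigl(\prod_i a_i^{\alpha_i}\bigr)\x^{\alpha}$ with nonzero coefficient so the top monomial survives, and derive (c), (d), (e) from the fact that $\varphi_{\A'}\circ\varphi_{\A}=\varphi_{\A'\A}$ on the generators $x_i$. Your write-up of (a) is in fact slightly more careful than the paper's, which records the leading coefficient as $a_1\cdots a_n$ rather than $a_1^{\alpha_1}\cdots a_n^{\alpha_n}$.
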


\begin{proof}
\begin{enumerate}[a)]
\item Since $\A\circ f$ is linear and $\deg(f+g)=\max(\deg f, \deg g)$ for any elements $f, g\in \mathcal A_n$ of distinct multidegree, we just have to prove the statement for monomials. Notice that, writing $\alpha=(\alpha_1, \ldots, \alpha_n)$, $$\A\circ(\x^{\alpha})=(a_1x_1+b_1)^{\alpha_1}\cdots (a_nx_n+b_n)^{\alpha_n}=a_1\ldots a_n \cdot \x^{\alpha}+\sum_{\beta \in B}a_{\beta} \x^{\beta},$$ where $\beta<\alpha$ for any $\beta \in B$ and $a_{\beta}\in \F_q$. Since $a_1\ldots a_n\ne 0$, we have that the multidegree of $\A\circ(\x^{\alpha})$ is $\alpha$ and this finishes the proof.

\item This follows directly by definition, since $\mathbf I=(I, \ldots, I)$, where $I$ is the identity element of $\GL_2(\F_q)$.

\item This follows directly by calculations. 

\item Notice that $$\A'\circ (\A\circ f)=\A'\circ f(a_1x_1+b_1, \ldots, a_nx_n+b_n)=f(cx_1+d_1,\ldots, c_nx_n+d_n),$$
where $c_i=a_ia_i'$ and $d_i=a_i'b_i+b_i'$ for any $1\le i\le n$. By a directly calculation we see that $\A'\cdot \A=(B_1, \ldots, B_n)$, where $B_i=\left(\begin{matrix}c_i&d_i\\0&1\end{matrix}\right)$, i.e., $(\A'\A)\circ f=\A'\circ (\A\circ f)$. Note that $\A^{-1}=(A_1^{-1}, \ldots, A_n^{-1})$ and then 
$$\A^{-1}\circ (\A\circ f)=(\A^{-1}\cdot \A)\circ f= \mathbf {I}\circ f=f.$$ In particular, the endomorphism induced by $\A$ is an automorphism of $\mathcal A_n$, with inverse induced by $\A^{-1}$.

\item From the previous item, $$\underbrace{\A\circ \ldots\circ  \A}_{d \, \mathrm{times}}\circ f=\A^d\circ f.$$ To conclude the proof, notice that $\A^d$ induces the identity map if and only if $\A^d=\mathbf {I}$ in $G^n$ and the minimal positive integer $d$ with this property is $d=\ord(\A)$.
\end{enumerate}
\end{proof}

Lemma \ref{action} says that $G^n$ acts on $\mathcal A_n$ via the compositions $\A\circ f$. From now, $\A$ denotes an element of $G^n$ and the automorphism of $\mathcal A_n$ induced by it.  We also know that the automorphism $\A$ has order $\ord(\A)$. How large can be this order?

\begin{lemma}
The group $G^n$ has $[q(q-1)]^n$ elements, any of them of order dividing $p(q-1)$. Moreover, for $n>1$, there exist an element of order $p(q-1)$.
\end{lemma}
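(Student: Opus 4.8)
The plan is to prove the three assertions in turn. For the cardinality, note that an element of $G$ is a matrix $\left(\begin{matrix}a&b\\0&1\end{matrix}\right)$ with $a\in\F_q^*$ (forcing invertibility) and $b\in\F_q$ arbitrary, and these two choices are independent; hence $|G|=q(q-1)$, and since $G^n$ is the direct product of $n$ copies of $G$, $|G^n|=|G|^n=[q(q-1)]^n$.

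For the order bound, the cleanest approach is via the group homomorphism $\phi\colon G\to\F_q^*$ defined by $\left(\begin{matrix}a&b\\0&1\end{matrix}\right)\mapsto a$. The product rule for these upper-triangular matrices shows at once that $\phi$ is multiplicative; it is surjective, and its kernel $N$, consisting of the matrices with $a=1$, is isomorphic to the additive group $(\F_q,+)$ and is thus an abelian $p$-group in which every element has order dividing $p$. Now let $A\in G$ be arbitrary. Since $|\F_q^*|=q-1$, we get $\phi(A^{q-1})=\phi(A)^{q-1}=1$, so $A^{q-1}\in N$ and therefore $A^{p(q-1)}=(A^{q-1})^p=I$; hence $\ord(A)\mid p(q-1)$. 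Passing to $G^n$: if $\A=(A_1,\ldots,A_n)$ then $\A^k=(A_1^k,\ldots,A_n^k)$, so $\ord(\A)$ is the least common multiple of $\ord(A_1),\ldots,\ord(A_n)$, and consequently $\ord(\A)\mid p(q-1)$.

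For the sharpness when $n\ge 2$, I would exhibit an explicit element. Take $A_1=\left(\begin{matrix}1&1\\0&1\end{matrix}\right)$, which induces the substitution $x_1\mapsto x_1+1$; its $k$-th power induces $x_1\mapsto x_1+k$, so $\ord(A_1)=p$. Take $A_2=\left(\begin{matrix}g&0\\0&1\end{matrix}\right)$, where $g$ is a generator of $\F_q^*$; it induces $x_2\mapsto gx_2$, so $\ord(A_2)=q-1$. Setting $A_3=\cdots=A_n=I$ and $\A=(A_1,\ldots,A_n)$, we obtain $\ord(\A)=\mathrm{lcm}(p,q-1)=p(q-1)$, the last equality because $\gcd(p,q-1)=1$.

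I do not expect a genuine obstacle here: everything is elementary once the homomorphism $\phi$ is identified. The two points that deserve a word of care are (i) why the hypothesis $n>1$ is needed — for $q>2$ the orders occurring in $G$ are precisely $1$, $p$, and the divisors of $q-1$, none of which equals $p(q-1)$, so no single coordinate can realize this order; and (ii) the routine fact that the order of a tuple in a direct product is the least common multiple of the orders of its coordinates, which is immediate from the coordinatewise formula for powers.
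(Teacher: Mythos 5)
Your proposal is correct and takes essentially the same route as the paper: the same count $|G^n|=[q(q-1)]^n$, the coordinatewise order bound combined via the least-common-multiple formula, and the same witness for sharpness (one coordinate a translation of order $p$, one a homothety by a generator of $\F_q^*$, the rest the identity; the paper merely lists the two nontrivial coordinates in the opposite order). The only cosmetic difference is in the middle step: the paper determines the exact order of each $A\in G$ by a case split on whether $a=1$, concluding that every order divides $p$ or divides $q-1$, whereas you package the same structural fact into the homomorphism $A\mapsto a$ onto $\F_q^*$ with kernel of exponent $p$, giving $A^{p(q-1)}=I$ directly — both are valid and equally elementary.
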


\begin{proof}
Clearly $G$ has $q(q-1)$ elements (since $b\in \F_q$ and $a\in \F_q^*$), hence $G^n$ has $[q(q-1)]^n$ elements. For a generic element $A\in G$ with $A=\left(\begin{matrix}a&b\\0&1\end{matrix}\right)$, we have $\ord(A)=\ord(a)$ if $a\ne 1$ or $b=0$ and $\ord(A)=p$, otherwise. In particular, the order of any element $A\in G$ divides $p$ or $(q-1)$. Since the order of $\A$ is just the least common multiple of the order of its coordinates (viewed as elements of $G$), the order of $\A$ in $G^n$ always divides $p(q-1)$. Also, since $\F_q^*$ is cyclic, there is an element $\theta\in \F_q$ of order $q-1$. A direct calculation shows that, if $n>1$, the element 
$$\A_0=\left[\left(\begin{matrix}\theta &0\\0&1\end{matrix}\right),\left(\begin{matrix}1&1\\0&1\end{matrix}\right), I, \ldots, I\right],$$
has order $p(q-1)$ in $G^n$.
\end{proof}

We have noticed that, in the univariate case, the study of invariant polynomials can be reduced to the study of translations $x\mapsto x+b$ and homotheties $x\mapsto ax$. The idea relies on the changes of variable $y=x+\frac{b}{a-1}$ and $y=\frac{x}{b}$. In terms of matrices, we are just taking conjugations; we will see that this can be extended more generally. For an element $A\in G$ we say that $A$ is of $h$-{\it type} or $t$-{\it type} if $A=\left(\begin{matrix}a&0\\0&1\end{matrix}\right)$ for some $a\ne 0, 1$ or $A=\left(\begin{matrix}1&1\\0&1\end{matrix}\right)$, respectively. We can easily see that any element of $G$ different from the identity is conjugated in $G$ to an element of $h$-type or $t$-type. The first case occurs when $A$ is diagonalizable ($a\ne 1$) and the second one occurs when $A$ has only ones in the diagonal:
$$A=\left(\begin{matrix}a&b\\0&1\end{matrix}\right)=\left(\begin{matrix}1&\frac{-b}{a-1}\\0&1\end{matrix}\right)\left(\begin{matrix}a&0\\0&1\end{matrix}\right)\left(\begin{matrix}1&\frac{b}{a-1}\\0&1\end{matrix}\right),$$
and
$$A=\left(\begin{matrix}1&b\\0&1\end{matrix}\right)=\left(\begin{matrix}b&0\\0&1\end{matrix}\right)\left(\begin{matrix}1&1\\0&1\end{matrix}\right)\left(\begin{matrix}\frac{1}{b}&0\\0&1\end{matrix}\right).$$

We obtain the following.

\begin{theorem}\label{HT}
Let $\A$ and $\B$ two elements in $G^n$ that are conjugated, $\A=\A_0\B\A_0^{-1}$, where $\A_0\in G^n$. The following hold:
\begin{enumerate}[a)]
\item The $\F_q$-automorphism induced by $\A_0^{-1}$, when restricted to $R_{\A}$, is an $\F_q$-isomorphism between $R_{\A}$ and $R_{\B}$. Moreover, if $R_{\A}$ is a finitely generated $\F_q$-algebra such that $R_{\A}=\F_q[f_1, \ldots, f_m]$, where $f_i\in \mathcal A_n$, then $R_{\B}$ is also finitely generated as an $\F_q$-algebra and $R_{\B}=\F_q[\A_0^{-1}\circ f_1, \ldots, \A_0^{-1}\circ f_m]$.
\item There exist unique nonnegative integers $t=t(\A)$ and $h=h(\A)$ and an element $\A'\in G^n$ such that $t$ entries of $\A'$ are of $t$-type, $h$ are of $h$-type and the $n-h-t$ remaining equal to the identity matrix with the additional property that $R_{\A}$ and $R_{\A'}$ are isomorphic, via the isomorphism induced by an element $\A_0\in G^n$.
\end{enumerate}
\end{theorem}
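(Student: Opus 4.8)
The plan is to handle the two parts in order, reducing everything to the group-action formalism of Lemma~\ref{action} together with the fact that $G^n$ is a direct product on which conjugation acts coordinate-wise.

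For part a), I would first verify that the $\F_q$-automorphism $\varphi$ of $\mathcal A_n$ induced by $\A_0^{-1}$ maps $R_{\A}$ into $R_{\B}$. Take $f\in R_{\A}$, so $\A\circ f=f$; I want $\B\circ(\A_0^{-1}\circ f)=\A_0^{-1}\circ f$. By Lemma~\ref{action} d), the left-hand side is $(\B\A_0^{-1})\circ f$, while $\A_0^{-1}\circ f=\A_0^{-1}\circ(\A\circ f)=(\A_0^{-1}\A)\circ f$; and the relation $\A=\A_0\B\A_0^{-1}$ gives $\A_0^{-1}\A=\B\A_0^{-1}$, so the two coincide. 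Exchanging the roles of $\A$ and $\B$ (using $\B=\A_0^{-1}\A\A_0$), the automorphism induced by $\A_0$ maps $R_{\B}$ into $R_{\A}$, and Lemma~\ref{action} d) shows the two restrictions are mutually inverse. Since $\varphi$ is an $\F_q$-algebra homomorphism of $\mathcal A_n$, its restriction $R_{\A}\to R_{\B}$ is an $\F_q$-algebra isomorphism; in particular it carries an $\F_q$-algebra generating set $\{f_1,\ldots,f_m\}$ of $R_{\A}$ to the generating set $\{\A_0^{-1}\circ f_1,\ldots,\A_0^{-1}\circ f_m\}$ of $R_{\B}$, which is the ``moreover''.

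For part b), the point is that a conjugation in $G^n$ is coordinate-wise, so it suffices to put each coordinate $A_i\in G$ into normal form inside $G$. If $A_i=I$, leave it alone. If $A_i\ne I$, the explicit conjugation identities displayed just before the statement show that $A_i$ is conjugate in $G$ to $\left(\begin{matrix}1&1\\0&1\end{matrix}\right)$ when $a_i=1$ (so $b_i\ne 0$) and to $\left(\begin{matrix}a_i&0\\0&1\end{matrix}\right)$ when $a_i\ne 1$. Taking $\A_0=(C_1,\ldots,C_n)$ with $C_i$ the conjugating matrix in coordinate $i$ (and $C_i=I$ wherever $A_i=I$), the element $\A':=\A_0^{-1}\A\A_0$ has the required shape, with $t=t(\A)$ equal to the number of indices $i$ with $a_i=1,\ b_i\ne 0$ and $h=h(\A)$ equal to the number with $a_i\ne 1$. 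Writing $\A=\A_0\A'\A_0^{-1}$ and applying part a) with $\B=\A'$ then gives $R_{\A}\cong R_{\A'}$ via the automorphism induced by $\A_0^{-1}\in G^n$.

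Finally, for the uniqueness of $t$ and $h$ I would observe that a direct computation of $\left(\begin{matrix}\alpha&\beta\\0&1\end{matrix}\right)\left(\begin{matrix}a&b\\0&1\end{matrix}\right)\left(\begin{matrix}\alpha&\beta\\0&1\end{matrix}\right)^{-1}$ shows conjugation in $G$ fixes the $(1,1)$-entry $a$ and, when $a=1$, fixes the property $b=0$; hence the three classes $\{I\}$, $\{\,a=1,\ b\ne 0\,\}$, $\{\,a\ne 1\,\}$ in $G$ are each stable under $G$-conjugation, so the numbers of coordinates of $\A$ of each kind are invariants of the $G^n$-conjugacy class of $\A$, forcing $t$ and $h$ to be the quantities above. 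The whole argument is bookkeeping; the only place that genuinely needs care is tracking the order of composition in Lemma~\ref{action} d) when checking that $\A_0^{-1}\circ(-)$ really lands in $R_{\B}$, since a misplaced inverse there would invalidate both parts.
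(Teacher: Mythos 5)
Your proposal is correct and follows essentially the same route as the paper: part a) by checking that the automorphism induced by $\A_0^{-1}$ carries $R_{\A}$ onto $R_{\B}$ using Lemma \ref{action} d), and part b) by conjugating coordinate-wise to $h$-type or $t$-type normal forms and invoking part a). Your explicit computation showing that $G$-conjugation preserves the $(1,1)$-entry (and, when $a=1$, the vanishing of $b$) actually justifies the uniqueness of $t$ and $h$ more carefully than the paper's one-line assertion.
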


\begin{proof}
\begin{enumerate}[a)]
\item  Notice that $\A_0^{-1}\A\A_0=\B$. Hence, for any $f\in \mathcal A_n$, $\B\circ f=f$ if and only if $\A\circ (\A_0\circ f)=\A_0\circ f$, i.e., $\A_0\circ f\in R_{\A}$. In other words, $R_{\B}$ is the homomorphic image of $R_{\A}$ by the $\F_q$-automorphism $\A_0^{-1}$ of $\mathcal A_n$. Hence, if $\varphi_{\A, \B}:R_{\A}\rightarrow R_{\B}$ is the restriction of this automorphism to $R_{\A}$, $\varphi_{\A, \B}$ is an $\F_q$-isomorphism.  Suppose that $R_{\A}=\F_q[f_1, \ldots, f_m]$, where $f_i\in \mathcal A_n$, and let $g\in R_{\B}$. In particular, $\varphi_{\A, \B}^{-1}(g)$ is in $R_{\A}$, hence it is a polynomial expression in terms of the elements $f_1, \ldots, f_m$. Therefore, $g=\varphi_{\A, \B}(\varphi_{\A, \B}^{-1}(g))$ is a polynomial expression in terms of $\varphi_{\A, \B}(f_1), \ldots, \varphi_{\A, \B}(f_m)$. In other words, $R_{\B}\subseteq \F_q[\varphi_{\A, \B}(f_1), \ldots, \varphi_{\A, \B}(f_m)]$. The inverse inclusion follows in a similar way. Notice that, from definition, each $f_i$ is in $R_{\A}$, hence $\varphi_{\A, \B}(f_i)=\A_0^{-1}\circ f_i$ for $1\le i\le m$.

\item As we have seen, any coordinate of $\A\in G$ is conjugated in $G$ to an element of $h$-type or $t$-type. Write $\A=(A_1, \ldots, A_n)$ and let $C_H$ (resp. $C_T$) be the sets of integers $i$ (resp. $j$) with $1\le i, j\le n$ such that the $i$-th (resp. $j$-th) coordinate of $\A\in G$ is conjugated in $G$ to an element of $h$-type (resp. $t$-type), and set $h=|C_H|$, $t=|C_T|$. Also, for each $i\in C_h\cup C_t$, let $B_i$ be the element of $G$ such that $B_iA_iB_i^{-1}$ is of $t$-type or $h$-type and $B_i=I$ for $i\not\in C_h\cup C_t$. If we set $\A_0=(B_1, \ldots, B_n)$, notice that the element $\A'=\A_0\A\A_0^{-1}\in G^n$ is such that $t$ entries of $\A'$ are of $t$-type, $h$ are of $h$-type and the $n-h-t$ remaining equal to the identity matrix. Now, the result follows from the previous item. The uniqueness of the nonnegative integers $h$ and $t$ follows from the fact that the sets $C_H$ and $C_T$ are unique.
\end{enumerate}
\end{proof}

Theorem \ref{HT} shows that any element $\A \in G^n$ is conjugated to another element $\A'\in G^n$ in a reduced form (any coordinate is either of $h$-type, $t$-type or the identity matrix), such that the rings $R_{\A}$ and $R_{\A'}$ are isomorphic. We also note that, if we reorder the variables, no algebraic structure of the ring $R_{\A}$ is affected. From now, we assume that $\A\in G^n$ has the first coordinates as matrices of the $h$-type, the following ones of the $t$-type and the last ones equal  to the identity matrix.

\begin{definition}
Let $\A\in G^n$. For nonnegative integers $t$ and $h$ such that $t+h\le n$ we say that $\A$ is of the type $(h, t)$ if the first $h$ coordinates of $\A$ are of $h$-type, the following $t$ are of $t$-type and the $n-h-t$ remaining equal to the identity matrix.
\end{definition}

The type of $\A$, through the elements we are considering now, is well defined. We fix some notation on the coordinates of $h-$type of $\A$.

\begin{definition}
Let $\A$ be an element of $G^n$ of type $(h, t)$ and write $\A=(A_1, \ldots, A_n)$. For $h=0$, set $H(\A)=\emptyset$ and, for $h\ge 1$, set $H(\A)=\{a_1, \ldots, a_h\}\in \F_q^h$, where each $a_i$ is the first entry in the main diagonal of $A_i$ and $a_i\ne 1$ for $1\le i\le h$.
\end{definition}

It is clear that an element $\A$ of type $(h, t)$ is uniquely determined by $t$ and the set $H(\A)$.

\subsection{Translations and homotheties}

We start looking at the elements $\A$ of type $(0, t)$, i.e., maps consisting of translations $x_i\mapsto x_i+1$ for $1\le i\le t$, that fixes the remaining variables. In the univariate case we see that the set of invariant polynomials equals $\F_q[x^p-x]$. Let us see what happens in two variables: notice that $x^p-x$ and $y^p-y$ are polynomials invariant by the translations $x\mapsto x+1$ and $y\mapsto y+1$ and, if we consider these maps independently, i.e., if we look at the identity $$f(x+1, y)=f(x, y+1)=f(x, y),$$ one can show that the fixed-point subring is $\F_q[x^p-x, y^p-y]$. However, we are considering a less restrictive identity:
$$f(x+1, y+1)=f(x, y),$$
and in this case the polynomial $f(x, y)=x-y$ appears as an invariant element. Is not hard to see that $x-y$ does not belong to $\F_q[x^p-x, y^p-y]$. Is there any other exception? Well, notice that any polynomial $f(x, y)\in \F_q[x, y]$ can be written {\bf uniquely} as an univariate polynomial in $(x-y)$ with coefficients in $\F_q[y]$. In fact, $f(x, y)=g(x-y, y)$, where $g(x, y)=f(x+y, y)$. Hence $$f(x, y)=\sum_{i=0}^{m}(x-y)^iP_i(y),$$ and then $f(x+1, y+1)=f(x,y)$ if and only if $P_i(y+1)=P_i(y)$. In particular, from Theorem \ref{LR}, we know that each $P_i(y)$ is a polynomial expression in $t=y^p-y$. From this, one can show that the fixed-point subring is $\F_q[x-y, y^p-y]$. What about $x^p-x$? Well, notice that $x^p-x=(x-y)^p-(x-y)+(y^p-y)$, hence $x^p-x\in \F_q[x-y, y^p-y]$, as expected. 

As above, we will be frequently interested in writing an arbitrary polynomial $f\in \mathcal A_n$ as a finite sum of the form $\sum_{i\in B}g_ih_i$, where the variables appearing in $g_i\in \mathcal A_n$ are disjoint (or at least not contained) from the ones appearing in each $h_i\in \mathcal A_n$. This allows us to reduce our identities to well-known cases. We have the following:

\begin{lemma}\label{separation}
Let $m$ and $n$ be positive integers such that $m\le n$. Then any non zero $f\in \mathcal A_n$ can be written uniquely as 
$$f=\sum_{\beta \in B}\x^{\beta} P_{\beta},$$
where $B$ is a finite set (of distinct elements) and, for any $\beta \in B$, $\x^{\beta}$ is a monomial (or a constant) in $\F_q[x_1, \ldots, x_m]$ and each $P_{\beta}$ is a nonzero element of $\F_q[x_{m+1}\ldots, x_n]$ (which is $\F_q$ for $m=n$).
\end{lemma}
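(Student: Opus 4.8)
The plan is to prove Lemma \ref{separation} by a direct ``collect by monomials in the first $m$ variables'' argument, which is really a statement about the tensor-product structure $\mathcal A_n \cong \F_q[x_1,\ldots,x_m]\otimes_{\F_q}\F_q[x_{m+1},\ldots,x_n]$, but phrased elementarily so it stays within the self-contained style of Section 2.

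First I would establish existence. Write $f=\sum_{\alpha\in C}c_\alpha \x^\alpha$ in its unique expansion over the monomial basis of $\mathcal A_n$, with $C$ finite and each $c_\alpha\in\F_q^*$. Each multi-index $\alpha=(\alpha_1,\ldots,\alpha_n)$ splits uniquely as $\alpha=(\beta,\gamma)$ with $\beta=(\alpha_1,\ldots,\alpha_m)\in\N^m$ and $\gamma=(\alpha_{m+1},\ldots,\alpha_n)\in\N^{n-m}$, so that $\x^\alpha=\x^\beta\cdot\mathbf{x}^\gamma$ where $\x^\beta\in\F_q[x_1,\ldots,x_m]$ and $\mathbf{x}^\gamma\in\F_q[x_{m+1},\ldots,x_n]$. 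Let $B$ be the (finite) set of distinct $\beta$ that occur as the first component of some $\alpha\in C$. For each $\beta\in B$ set $P_\beta:=\sum_{\gamma:(\beta,\gamma)\in C} c_{(\beta,\gamma)}\,\mathbf{x}^\gamma\in\F_q[x_{m+1},\ldots,x_n]$; this is nonzero precisely because $\beta$ was chosen to occur. Grouping the terms of $f$ according to their first component $\beta$ then gives $f=\sum_{\beta\in B}\x^\beta P_\beta$, which is the desired form (when $m=n$ the index $\gamma$ is empty, $B$ is the full support of $f$, and each $P_\beta=c_\beta\in\F_q^*$).

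For uniqueness, suppose $\sum_{\beta\in B}\x^\beta P_\beta=\sum_{\beta\in B'}\x^{\beta}P'_\beta$ are two such representations; after adding the ``missing'' $\beta$'s with zero coefficient polynomials we may assume $B=B'$ is a common index set and must show $P_\beta=P'_\beta$ for all $\beta$, hence that the genuinely nonzero parts agree. Expanding each $P_\beta$ and $P'_\beta$ in the monomial basis of $\F_q[x_{m+1},\ldots,x_n]$ and multiplying by $\x^\beta$, every term is of the form (coefficient)$\cdot\x^\beta\mathbf{x}^\gamma=$ (coefficient)$\cdot\x^{(\beta,\gamma)}$, and distinct pairs $(\beta,\gamma)$ give distinct monomials of $\mathcal A_n$; so comparing coefficients of $\x^{(\beta,\gamma)}$ on both sides (using uniqueness of the monomial expansion in $\mathcal A_n$) forces the $\mathbf{x}^\gamma$-coefficient of $P_\beta$ to equal that of $P'_\beta$ for every $\gamma$, i.e.\ $P_\beta=P'_\beta$. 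In particular the set of $\beta$ with $P_\beta\neq 0$, and the corresponding $P_\beta$, are uniquely determined by $f$.

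There is no serious obstacle here: the only thing to be careful about is bookkeeping with the index sets (ensuring the $\beta\in B$ are distinct and the associated $P_\beta$ are nonzero, and handling the degenerate case $m=n$ where the $P_\beta$ live in $\F_q$), together with an explicit appeal to the uniqueness of the monomial-basis expansion of elements of $\mathcal A_n$ recalled just before Definition~2.3. The mild subtlety worth stating cleanly is that the map $\alpha\mapsto(\beta,\gamma)$ on multi-indices is a bijection $\N^n\to\N^m\times\N^{n-m}$ compatible with the product of monomials, which is exactly what makes both the grouping (existence) and the coefficient comparison (uniqueness) go through.
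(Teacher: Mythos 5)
Your proof is correct and takes essentially the same route as the paper, which simply observes that $\mathcal A_n$ is the polynomial ring in $x_1,\ldots,x_m$ over $R=\F_q[x_{m+1},\ldots,x_n]$ and invokes uniqueness of that representation; you have merely unpacked this one-line argument into an explicit grouping of monomials and coefficient comparison. No gap, and no genuinely different idea.
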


\begin{proof}
Since the variables are independent, $\mathcal A_n$ can be viewed as the ring of polynomials in the variables $x_1, \ldots, x_m$ with coefficients in $R=\F_q[x_{m+1}\ldots, x_n]$ and the result follows.\end{proof}

It is straightforward to check that, in Lemma \ref{separation}, we can replace $x_1, \ldots, x_m$ and $x_{m+1}, \ldots, x_n$ by any partition of $\{x_1, \ldots, x_n\}$ into 2 sets. We now present a natural extension of the ideas that we have discussed for translations in $\F_q[x,y]$:
\begin{theorem}\label{translation}
Suppose that $\A$ is of type $(0, t)$, where $t\le n$ is a nonnegative integer. Then $R_{\A}=\mathcal A_n$ if $t=0$, $R_{\A}=\F_q[x_1^p-x_1, x_2, \ldots, x_n]$ if $t=1$ and, for $t\ge 2$,
$$R_{\A}=\F_q[x_1-x_2, \ldots, x_{t-1}-x_t, x_t^p-x_t, x_{t+1}, \ldots, x_{n}].$$ In particular, $R_{\A}$ is a finitely generated $\F_q$-algebra.
\end{theorem}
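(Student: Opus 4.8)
The plan is to treat $t=0$ separately and, for $t\ge1$, to reduce to the univariate translation result Theorem \ref{LR}(i) by a linear change of variables. If $t=0$ then $\A=\mathbf{I}$, so $R_{\A}=\mathcal A_n$ by part (b) of Lemma \ref{action}. Assume $t\ge1$. The inclusion of the right-hand side into $R_{\A}$ is a direct check: $\A\circ-$ is an $\F_q$-algebra endomorphism of $\mathcal A_n$ (Lemma \ref{action}), and it fixes each proposed generator, since $\A\circ(x_i-x_{i+1})=(x_i+1)-(x_{i+1}+1)=x_i-x_{i+1}$ for $1\le i\le t-1$, $\A\circ(x_t^p-x_t)=(x_t+1)^p-(x_t+1)=x_t^p-x_t$ (recall $\Char\F_q=p$), and $\A\circ x_j=x_j$ for $j>t$; hence it fixes the whole subalgebra they generate.

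For the reverse inclusion I would pass to coordinates in which the $t$ simultaneous translations $x_i\mapsto x_i+1$ collapse to a single translation. Put $z_i=x_i-x_{i+1}$ for $1\le i\le t-1$, $z_t=x_t$, and $z_j=x_j$ for $t<j\le n$. These are the images of $x_1,\dots,x_n$ under the $\F_q$-algebra automorphism of $\mathcal A_n$ sending $x_i\mapsto x_i-x_{i+1}$ for $i<t$ and fixing the remaining $x_i$ (a unitriangular linear substitution, with inverse $x_i\mapsto x_i+x_{i+1}+\cdots+x_t$ for $i<t$), so $z_1,\dots,z_n$ are algebraically independent over $\F_q$ and $\mathcal A_n=\F_q[z_1,\dots,z_n]$. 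By the computation above, $\A$ fixes every $z_i$ with $i\ne t$ and sends $z_t\mapsto z_t+1$; writing $f=F(z_1,\dots,z_n)$ and using that $\A\circ-$ is a ring homomorphism gives $\A\circ f=F(z_1,\dots,z_{t-1},z_t+1,z_{t+1},\dots,z_n)$. Thus $f\in R_{\A}$ if and only if $F$ is unchanged by the substitution $z_t\mapsto z_t+1$.

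To finish, I would apply Lemma \ref{separation} in the polynomial ring $\F_q[z_1,\dots,z_n]$ with the partition $\{z_t\}$ versus $\{z_i:i\ne t\}$, writing $F=\sum_{\beta\in B}\mathbf m^{\beta}P_{\beta}$ with the $\mathbf m^{\beta}$ distinct monomials in the $z_i$ ($i\ne t$) and each $P_{\beta}\in\F_q[z_t]$ nonzero. The substitution $z_t\mapsto z_t+1$ fixes each $\mathbf m^{\beta}$ and replaces $P_\beta(z_t)$ by $P_\beta(z_t+1)$, so by the uniqueness part of Lemma \ref{separation}, $F$ is invariant if and only if $P_\beta(z_t+1)=P_\beta(z_t)$ for all $\beta$. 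Each $P_\beta$ has coefficients in $\F_q$, so Theorem \ref{LR}(i) (with $b=1$) yields $P_\beta\in\F_q[z_t^p-z_t]$ for every $\beta$; this is precisely the statement that $F\in\F_q[z_1,\dots,z_{t-1},z_t^p-z_t,z_{t+1},\dots,z_n]$, and conversely every such $F$ is invariant. Translating back to the $x_i$ gives the claimed description of $R_{\A}$ (for $t=1$ the list of differences is empty and one obtains $\F_q[x_1^p-x_1,x_2,\dots,x_n]$), and since the generating set is finite, $R_{\A}$ is a finitely generated $\F_q$-algebra.

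Most of this is routine; the step that deserves care is the last one — invoking the uniqueness in Lemma \ref{separation} to factor out the monomials in the variables $z_i$ ($i\ne t$) so that the remaining coefficients are honest univariate polynomials over $\F_q$, to which Theorem \ref{LR} applies verbatim — together with the (elementary but necessary) verification that $x_i\mapsto z_i$ presents $\mathcal A_n$ as a polynomial ring in the new generators, so that Lemma \ref{separation} is legitimately available there.
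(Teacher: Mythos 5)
Your proof is correct, and it uses the same essential ingredients as the paper's: the inclusion $\supseteq$ by checking generators, the separation-of-variables Lemma \ref{separation}, and the univariate Theorem \ref{LR}(i) applied to the coefficients. The only difference is organizational. The paper handles $t\ge 2$ by an iteration: it writes $f$ uniquely as $\sum_i (x_1-x_2)^i Q_i$ with $Q_i\in\F_q[x_2,\ldots,x_n]$, concludes $R_{\A}=L[x_1-x_2]$ with $L$ the fixed-point subring of $\F_q[x_2,\ldots,x_n]$, and repeats this $t-1$ times before invoking the $t=1$ case for the last translated variable. You instead perform the composite of all these steps at once via the single unitriangular change of coordinates $z_i=x_i-x_{i+1}$ ($i<t$), $z_t=x_t$, $z_j=x_j$ ($j>t$), under which the $t$ simultaneous translations collapse to the single translation $z_t\mapsto z_t+1$, and then apply Lemma \ref{separation} and Theorem \ref{LR}(i) exactly once. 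This buys a cleaner argument -- no induction and no need to track the intermediate fixed-point subrings $L$ -- at the modest cost of verifying explicitly that $z_1,\ldots,z_n$ give a new polynomial coordinate system on $\mathcal A_n$, which you do. Both proofs are complete; yours is arguably tidier since it avoids the paper's informal ``after $t-1$ iterations of this process'' step.
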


\begin{proof}
The case $t=0$ is straightforward since the only element of type $(0, 0)$ is the the identity of $G^n$. For $t=1$ we obtain the equation $f(x_1+1, x_2, \ldots, x_n)=f(x_1, \ldots, x_n)$. From Lemma \ref{separation}, any $f\in \mathcal A_n$ can be written uniquely as 
$$f=\sum_{\alpha \in B}\x^{\alpha}P_\alpha(x_1),$$
where $B$ is a finite set and $\x^{\alpha}$ is a monomial in the variables $x_2, \ldots, x_n$. In particular, we have $f\in R_{\A}$ if and only if 
$$f=\sum_{\alpha \in B}\x^{\alpha}P_\alpha(x_1+1),$$
that is, $P_{\alpha}(x_1+1)=P_{\alpha}(x)$. From Theorem \ref{LR}, we know that the last equality holds if and only if $P_{\alpha}(x_1)$ is a polynomial in $x_1^p-x_1$, and then $R_{\A}=\F_q[x_1^p-x_1, x_2, \ldots, x_n]$. Suppose now that $t\ge 2$; given $f\in \mathcal A_n$, notice that, from Lemma \ref{separation}, $g=f(x_1+x_2, x_2, \ldots, x_n)\in \mathcal A_n$ can be written uniquely as $g=\sum_{i=1}^{n}x_1^iP_i$, where $P_i\in \F_q[x_2, \ldots, x_n]$ and $P_n$ is non zero. Therefore, $f=g(x_1-x_2, x_2, \ldots, x_n)$ can be written uniquely as
$$f=\sum_{i=1}^{n}(x_1-x_2)^iQ_i,$$
where $Q_i=P_i\in \F_q[x_2, \ldots, x_n]$. In particular, $f\in R_{\A}$ if and only if
$$f=\sum_{i=1}^{n}(x_1-x_2)^i\cdot \A\circ Q_i.$$
From the uniqueness of the polynomials $Q_i$ we have that $\A\circ f=f$ if and only if $\A\circ Q_i=Q_i$, where each $Q_i$ is in $\F_q[x_2, \ldots, x_n]$. In other words, $R_{\A}=L[x_1-x_2]$, where $L$ is the fixed-point subring of $\F_q[x_2, \ldots, x_n]$ by $\A$. We follow in the same way for the ring $L$. After $t-1$ iteration of this process, we obtain $R_{\A}=L_0[x_1-x_2, x_2-x_3, \ldots, x_{t-1}-x_t]$, where $L_0$ is the fixed-point subring of $\F_q[x_t, \ldots, x_n]$ by $\A$. Since $\A$ maps $x_t$ to $x_t+1$ and fixes $x_i$ for $t<i\le n$, we are back to the case $t=1$ (now with $n-t+1$ variables) and so $L_0=\F_q[x_t^p-x_t, x_{t+1}, \ldots, x_n]$. This finishes the proof.
\end{proof}

We introduce an useful notation:

\begin{definition}
\begin{enumerate}[(i)]
\item For any nonnegative integers $h, t$ such that $h+t\le n$, let $L(h, 0)=\emptyset$, $L(h, 1)=\{x_{h+1}^p-x_{h+1}\}$ and, for $t\ge 2$, $$L(h, t)=\{x_{h+1}-x_{h+2}, \ldots, x_{h+t-1}-x_{h+t}, x_{h+t}^p-x_{h+t}\}.$$
\item For any nonnegative integer $d\le n$, set $V_n=\emptyset$ and, for $d\le n-1$, $V_{d}=\{x_{d+1}, \ldots, x_{n}\}$.
\end{enumerate}
\end{definition}

From definition, Theorem \ref{translation} says that if $\A$ is of type $(0, t)$, the set $L(0, t)\cup V_{t}$ is a set of generators for $R_{\A}$ as an $\F_q$-algebra. We have the following ``translated'' version of Theorem \ref{translation}.

\begin{corollary}\label{trans}
Let $\Psi(h, t)$ be the $\F_q$-automorphism of $\mathcal A_n$, that maps $x_i$ to $x_i+1$ for $h+1\le i\le h+t$, where $h$ and $t$ are nonnegative integers such that $h+t\le n$. Let $f$ be a polynomial in $\F_{q}[x_{h+1}, \ldots, x_n]$.  Then $f$ is invariant by $\Psi(h, t)$ if and only if $f$ is a polynomial expression in terms of the elements of $L(h, t)\cup V_{h+t}$, i.e., the fixed point subring of $\F_{q}[x_{h+1}, \ldots, x_n]$ by $\Psi(h, t)$ coincides with the $\F_q$-algebra generated by $L(h, t)\cup V_{h+t}$.
\end{corollary}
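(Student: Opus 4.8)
The plan is to reduce Corollary \ref{trans} to Theorem \ref{translation} by a relabelling of variables. The point is that although $\Psi(h,t)$ is defined as an automorphism of the full ring $\mathcal A_n$, it sends each of the variables $x_{h+1}, \ldots, x_n$ to a polynomial in these same variables (namely $x_i+1$ for $h+1\le i\le h+t$ and $x_i$ for $h+t<i\le n$). Hence $\Psi(h,t)$ restricts to an $\F_q$-automorphism of the subring $R:=\F_q[x_{h+1}, \ldots, x_n]$, and since $f\in R$ by hypothesis, the condition ``$f$ invariant by $\Psi(h,t)$'' can be read entirely inside $R$.

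First I would set $y_j:=x_{h+j}$ for $1\le j\le n-h$, so that $R=\F_q[y_1, \ldots, y_{n-h}]$ is itself a polynomial ring in $m:=n-h$ variables, and the restriction of $\Psi(h,t)$ to $R$ is exactly the $\F_q$-automorphism of $\F_q[y_1, \ldots, y_m]$ that sends $y_j\mapsto y_j+1$ for $1\le j\le t$ and fixes $y_j$ for $t<j\le m$; in other words, it is the automorphism induced by an element $\A\in G^{m}$ of type $(0,t)$ in the sense of Theorem \ref{translation} (with $n$ replaced by $m$). Note $t\le m$ because $h+t\le n$. Applying Theorem \ref{translation} in the variables $y_1,\ldots,y_m$ then gives that the fixed-point subring of $R$ under $\Psi(h,t)$ is the $\F_q$-algebra generated by $L(0,t)\cup V_{t}$, where these sets are formed with respect to the $y$-variables: explicitly $R^{\Psi(h,t)}=\F_q[y_1,\ldots,y_m]$ if $t=0$, $\F_q[y_1^p-y_1, y_2,\ldots,y_m]$ if $t=1$, and $\F_q[y_1-y_2,\ldots,y_{t-1}-y_t, y_t^p-y_t, y_{t+1},\ldots,y_m]$ if $t\ge 2$.

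Finally I would translate the generating set back through $y_j=x_{h+j}$ and check that it matches the sets defined before the corollary: $y_i-y_{i+1}=x_{h+i}-x_{h+i+1}$, $y_t^p-y_t=x_{h+t}^p-x_{h+t}$, and $\{y_{t+1},\ldots,y_m\}=\{x_{h+t+1},\ldots,x_n\}$, so that the $y$-version of $L(0,t)$ is precisely $L(h,t)$ and the $y$-version of $V_t$ is precisely $V_{h+t}$ (the three cases $t=0$, $t=1$, $t\ge 2$ being handled by the corresponding cases of the definition of $L(h,t)$, and $V_{h+t}=\emptyset$ exactly when $t=0,\,h=n$ or $h+t=n$, consistent with $V_m=\emptyset$). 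This yields $R^{\Psi(h,t)}=\F_q[L(h,t)\cup V_{h+t}]$, which is the claim.

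There is no genuine obstacle here; the argument is a bookkeeping reduction. The only point requiring a little care is the very first one — confirming that $\Psi(h,t)$ legitimately restricts to $R$ and that this restriction is, after relabelling, an honest type-$(0,t)$ element of $G^{n-h}$ so that Theorem \ref{translation} applies verbatim — together with the routine but notation-heavy verification that the combinatorial definitions of $L(\cdot,\cdot)$ and $V_{\cdot}$ are compatible with the index shift by $h$.
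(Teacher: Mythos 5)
Your proposal is correct and follows exactly the paper's own argument: restrict $\Psi(h,t)$ to $\F_q[x_{h+1},\ldots,x_n]$, relabel $y_j=x_{h+j}$ so that the restriction becomes the type-$(0,t)$ automorphism in $G^{n-h}$, and apply Theorem \ref{translation}. The paper states this in two sentences; your additional bookkeeping (checking that the $y$-versions of $L(0,t)$ and $V_t$ translate back to $L(h,t)$ and $V_{h+t}$) is the same verification, just made explicit.
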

\begin{proof}
Just notice that $\Psi(h,t )$, when restricted to $\F_{q}[x_{h+1}, \ldots, x_n]$, coincides with the automorphism induced by the element of type $(0, t)$ in $G^{n-h}$. We write $y_i=x_{h+i}$ for $1\le i\le n-h$ and the result follows from Theorem \ref{translation}.

\end{proof}

We now look at the elements $\A$ of type $(h, 0)$, i.e., maps consisting of homotheties $x_i\mapsto a_ix_i$ for $1\le i\le h$, that fixes the remaining variables. We have the following:

\begin{proposition}\label{homothety}
Suppose that $\A$ is of type $(h, 0)$, where $h$ is a nonnegative integer and, for $h\ge 1$, set $H(\A)=\{a_1, \ldots, a_h\}$ and $d_i=\ord(a_i)$. For $h\ge 1$, let $C_{\A}\in \N^{h}$ be the set of all vectors $(b_1, \ldots, b_h)\in \N^h$ such that $b_i\le d_i$, at least one $b_i$ is nonzero and \begin{equation}\label{product-one}a_1^{b_1}\cdots a_h^{b_h}=1.\end{equation}
For each $b\in C_{\A}$, $b=(b_1, \ldots, b_h)$ we associate the monomial $\y^{b}:=x_1^{b_1}\ldots x_h^{b_h}$. Let $M_{\A}:=\{\y^b\,|\, b\in C_{\A}\}$ and $h_{\A}:=|M_{\A}|$. Then $R_{\A}=\mathcal A_n$ if $h=0$ and, for $h\ge 1$, $$R_{\A}=\F_q[y_1, \ldots, y_{h_{\A}}, x_{h+1}, \ldots, x_{n}],$$ where $y_i$ runs through the distinct elements of $M_{\A}$. In particular, $R_{\A}$ is finitely generated as an $\F_q$-algebra.
\end{proposition}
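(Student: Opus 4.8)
<br>

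The plan is to mimic the proof of Theorem \ref{translation}, reducing to the univariate homothety case from Theorem \ref{LR}(ii) by an iterated "separation of variables" argument, but the bookkeeping is organized around the product-one sequences in $\F_q^*$. First I would dispose of the case $h=0$, where $\A=\mathbf I$ and $R_{\A}=\mathcal A_n$ trivially. For $h\ge 1$, the automorphism $\A$ acts by $x_i\mapsto a_ix_i$ for $1\le i\le h$ and fixes $x_{h+1},\ldots,x_n$, so using Lemma \ref{separation} with the partition $\{x_1,\ldots,x_h\}$ versus $\{x_{h+1},\ldots,x_n\}$, any $f$ is uniquely $\sum_{\beta}\x^\beta P_\beta$ with $\x^\beta$ a monomial in $x_1,\ldots,x_h$ and $P_\beta\in\F_q[x_{h+1},\ldots,x_n]$. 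Since $\A$ fixes the latter variables, $\A\circ f=\sum_\beta (a_1^{\beta_1}\cdots a_h^{\beta_h})\x^\beta P_\beta$, and by uniqueness $\A\circ f=f$ holds if and only if for every $\beta$ appearing we have $a_1^{\beta_1}\cdots a_h^{\beta_h}=1$. Thus $R_{\A}$ is exactly the $\F_q$-span of the monomials $\x^\beta P_\beta$ where $\x^\beta$ is a "product-one" monomial in $x_1,\ldots,x_h$ and $P_\beta$ arbitrary; equivalently $R_{\A}$ is the $\F_q[x_{h+1},\ldots,x_n]$-algebra generated by the monoid of product-one monomials in $x_1,\ldots,x_h$.

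The next step is to show that this monoid of product-one monomials is generated, as a monoid, by the finite set $M_{\A}$ of monomials $\y^b$ with $b\in C_{\A}$ (the minimal ones, with each $b_i\le d_i$). The key observation is that the exponent vectors $\beta=(\beta_1,\ldots,\beta_h)\in\N^h$ with $a_1^{\beta_1}\cdots a_h^{\beta_h}=1$ form a submonoid $S$ of $(\N^h,+)$: it is the intersection of $\N^h$ with the sublattice $\Lambda=\{\beta\in\Z^h:\prod a_i^{\beta_i}=1\}$. I would argue that $S$ is generated as a monoid by its elements $b$ with $b_i\le d_i$ for all $i$: given any $\beta\in S$ with some $\beta_i\ge d_i$, one can subtract the vector $d_i e_i$ (which lies in $S$ since $a_i^{d_i}=1$), staying in $S\cap\N^h$, and iterate; this terminates and reduces to the bounded region, so every element of $S$ is an $\N$-combination of vectors in that finite bounded set, hence of the minimal ones among them — which is precisely $C_{\A}$. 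Consequently every product-one monomial is a product of monomials from $M_{\A}$, so $R_{\A}=\F_q[M_{\A}\cup\{x_{h+1},\ldots,x_n\}]=\F_q[y_1,\ldots,y_{h_{\A}},x_{h+1},\ldots,x_n]$, giving finite generation. Here $\{e_i\}$ denotes the standard basis of $\N^h$.

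I do not expect serious obstacles — the argument is essentially linear algebra over $\Z$ combined with Lemma \ref{separation} — but the point requiring the most care is the monoid-generation claim: one must be careful that "$b_i\le d_i$" (rather than $b_i<d_i$) is the correct cutoff, because a coordinate equal to $d_i$ can be genuinely needed (e.g. the monomial $x_i^{d_i}$ itself is a minimal product-one monomial and must appear among the generators), and that after the reduction one really lands inside $C_{\A}$ as defined, i.e. that the truly redundant bounded vectors are exactly the non-minimal ones. I would also note explicitly that the description makes no claim of minimality of the generating set $\{y_1,\ldots,y_{h_\A}\}$ at this stage — that finer question (and the size of a minimal $S$) is precisely what Sections 3 and 4 address — so for this proposition it suffices to exhibit $M_{\A}\cup V_h$ as a generating set and conclude finite generation.
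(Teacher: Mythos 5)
Your proposal is correct and follows essentially the same route as the paper: separation of variables via Lemma \ref{separation}, uniqueness forcing the product-one condition $a_1^{\beta_1}\cdots a_h^{\beta_h}=1$ on each monomial that appears, and reduction of the exponents modulo the orders $d_i$ (the paper does this in one step by Euclidean division $c_i=d_iQ_i+r_i$, you by iterated subtraction of $d_ie_i$) to write each invariant monomial as a product of elements of $M_{\A}$, using $x_i^{d_i}\in M_{\A}$. One small mislabeling: $C_{\A}$ is the set of \emph{all} bounded nonzero product-one vectors, not only the minimal ones (minimality enters only later, with $M_{\A}^*$), but this is harmless since generation by the full bounded set is exactly what your reduction establishes and all the proposition requires.
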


\begin{proof}
The case $t=0$ is straightforward since the only element of type $(0, 0)$ is the the identity of $G^n$. Suppose that $h>0$. From Lemma \ref{separation}, we know that any $f\in \mathcal A_n$ can be written uniquely as 
$$f=\sum_{\alpha \in B}\x^{\alpha}P_{\alpha},$$
where $B$ is a finite set, each $\x^{\alpha}$ is a monomial in the variables $x_1, \ldots, x_h$ and each $P_{\alpha}$ is a nonzero element of $\F_q[x_{h+1}, \ldots, x_{n}]$. Notice that
$$\A\circ f=\sum_{\alpha \in B}\x^{\alpha}(a_{\alpha}P_{\alpha}),$$
where, for each $\alpha=(c_1, \ldots, c_n)\in \N^n$, $a_{\alpha}$ is defined as the product $a_1^{c_1}\ldots a_h^{c_h}$. If $f\in R_{\A}$, then $\A\circ f=f$ and, from the uniqueness of the polynomials $P_{\alpha}$, it follows that $a_{\alpha}P_{\alpha}=P_{\alpha}$ for any $\alpha\in B$. In particular, since $P_{\alpha}\ne 0$, we get $a_{\alpha}=1$, that is, $$a_1^{c_1}\cdots a_h^{c_h}=1.$$ 
Writing $c_i=d_iQ_i+r_i$, where $d_i=\ord(a_i)$ and $0\le r_i<d_i$ for $1\le i\le h$, we see that the last equality implies that $a_1^{r_1}\ldots a_h^{r_h}=1$, i.e., $(r_1, \ldots, r_h)$ is either the zero vector or belongs to $C_{\A}$. In other words, $\x^{\alpha}=(x_1^{d_1})^{Q_1}\ldots (x_h^{d_h})^{Q_h}\cdot \y^{b}$, where $\y^b$ is either $1\in \F_q$ or an element of $M_{\A}$. But notice that, since $a_i^{d_i}=1$, each $x_i^{d_i}$ is in $M_{\A}$. Hence $f=\sum_{\alpha \in B}\x^{\alpha}P_{\alpha}$ is such that each $\x^{\alpha}$ is a finite product of elements in $M_{\A}$ (or equal to $1\in \F_q$) and, in particular, $R_{\A}\subseteq \F_q[y_1, \ldots, y_{h_{\A}}, x_{h+1}, \ldots, x_{n}]$, where $y_i$ runs through the distinct elements of $M_{\A}$. For the reverse inclusion $R_{\A}\supseteq \F_q[y_1, \ldots, y_{h_{\A}}, x_{h+1}, \ldots, x_{n}]$, notice that each monomial $y_i$ satisfies $\A\circ y_i=y_i$ and $\A$ trivially fixes the variables $x_{h+1}, \ldots, x_n$. Thus $R_{\A}=\F_q[y_1, \ldots, y_{h_{\A}}, x_{h+1}, \ldots, x_{n}]$ and we conclude the proof.
\end{proof}

For instance, suppose that $q$ is odd and, for $f\in \F_q[x, y, z]$, consider the identity $$f(x, y, z)=f(-x, -y, z).$$ In other words, $\A\circ f=f$, where $\A\in G^n$ is of type $(2, 0)$ and $H(\A)=\{-1, -1\}$. From Proposition \ref{homothety} we get $M_{\A}=\{x^2, y^2, xy, x^2y^2\}$ and $$R_{\A}=\F_q[x^2, y^2, xy, x^2y^2, z].$$ However, the element $x^2y^2$ is already in $\F_q[x^2, y^2, xy, z]$, since $x^2y^2=x^2\cdot y^2$ or even $x^2y^2=(xy)^2$. We then may write $R_{\A}=\F_q[x^2, y^2, xy, z]$. We introduce a subset of $M_{\A}$ that removes these {\it redundant} elements. 

\begin{definition}
Suppose that $\A$ is of type $(h, 0)$, where $h$ is a non negative integer. Let $M_{\A}^*=\emptyset$ if $h=0$ and, for $h\ge 1$, $M^*_{\A}$ is the subset of $M_{\A}$ comprising the monomials $\x^{\alpha}$ of $M_{\A}$ that are not divisible by any element of $M_{\A}\setminus \{\x^{\alpha}\}$, where $M_{\A}$ is as in Theorem \ref{homothety}. We set $N(\A)=|M_{\A}^*|-h$.
\end{definition}
From definition, if $\A$ is of type $(h, 0)$, where $h\ge 1$ and $H(\A)=\{a_1, \ldots, a_h\}$, then $\{x_1^{d_1}, \ldots, x_{h}^{d_h}\}\subseteq M_{\A}^*$, where $d_i=\ord(a_i)$; one can verify that any other element of $M_{\A}^*$ is a ``mixed'' monomial. We always have the bound $N(\A)\ge 0$ and, in fact, $N(\A)$ counts the number of mixed monomials appearing in $M_{\A}^*$. In the previous example, we have $M_{\A}^*=\{x^2, y^2, xy\}$ and $N(\A)=1$. 

Notice that any element of $M_{\A}\supseteq M_{\A}^*$ is a finite product of elements in $M_{\A}^*$; in fact, pick an element $\x^{\alpha}\in M_{\A}$. If $m_0:=\x^{\alpha}\in M_{\A}^*$, we are done. Otherwise, $\x^{\alpha}$ is divisible by some $\x^{\beta}\in M_{\A}^*\setminus \{\x^{\alpha}\}$. From the definition of $M_{\A}$, it follows that $m_1:={\x^{\alpha}}/{\x^{\beta}}$ is also an element of $M_{\A}$. We then proceed in the same way for the element $m_1={\x^{\alpha}}/{\x^{\beta}}$. This give us a sequence of monomials $\{m_0, m_1, \ldots \}$ such that the corresponding sequence of multidegrees $\{ \mathrm{mdeg}(m_0), \mathrm{mdeg}(m_1), \ldots \}$ is decreasing (with respect to the graded lex order). Therefore, after a finite number of steps we will arrive in an element $m_j\in M_{\A}^*$ and this process gives us $\x^{\alpha}$ as a finite product of elements in $M_{\A}^*$. In particular, if $\A$ is of type $(h, 0)$, then $M_{\A}^*\cup V_{h}$ and $M_{\A}\cup V_h$ generate the same $\F_q$-algebra, i.e., $M_{\A}^*\cup V_{h}$ generates
$R_{\A}$ as an $\F_q$-algebra. We finish this section showing that $M_{\A}^*$ is minimal in some sense.

\begin{lemma}\label{monomial-minimal}
For any $\x^{\alpha}\in M_{\A}^*$, $\x^{\alpha}$ cannot be written as a polynomial expression in terms of the elements in $M_{\A}^*\setminus \{\x^{\alpha}\}$.
\end{lemma}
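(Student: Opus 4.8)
The plan is to argue by contradiction, exploiting the fact that all elements of $M_\A$ are monomials in the variables $x_1,\ldots,x_h$ and that a polynomial identity can be read off monomial by monomial. Suppose, for contradiction, that some $\x^\alpha\in M_\A^*$ can be written as $\x^\alpha=P(\y_1,\ldots,\y_r)$, where $\y_1,\ldots,\y_r$ are the distinct elements of $M_\A^*\setminus\{\x^\alpha\}$ and $P$ is a polynomial with coefficients in $\F_q$. Expanding $P$, the right-hand side is an $\F_q$-linear combination of monomials, each of which is a product $\y_1^{e_1}\cdots\y_r^{e_r}$ of the $\y_i$'s. Since the left-hand side $\x^\alpha$ is a single monomial, comparing terms forces $\x^\alpha$ to equal (up to a nonzero scalar, hence exactly, since $\x^\alpha$ is monic) one of these products $\y_1^{e_1}\cdots\y_r^{e_r}$ with $\sum e_i\ge 1$.

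Next I would split into cases according to whether the exponent vector $(e_1,\ldots,e_r)$ has total weight $1$ or at least $2$. If $\sum e_i=1$, then $\x^\alpha=\y_j$ for some $j$, contradicting that the $\y_i$ are \emph{distinct} from $\x^\alpha$. If $\sum e_i\ge 2$, then $\x^\alpha$ is a product of at least two elements of $M_\A$ (counted with multiplicity): there is some $\y_j$ (with $e_j\ge 1$) dividing $\x^\alpha$, and the cofactor $\x^\alpha/\y_j=\y_1^{e_1}\cdots\y_j^{e_j-1}\cdots\y_r^{e_r}$ is a nonconstant monomial, being a product of the remaining (at least one) factors from $M_\A$. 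In particular $\y_j$ is an element of $M_\A\setminus\{\x^\alpha\}$ that properly divides $\x^\alpha$ — note $\y_j\ne\x^\alpha$ since it is one of the chosen generators, and the quotient is nonconstant so the division is proper. But this directly contradicts the defining property of $M_\A^*$, namely that $\x^\alpha\in M_\A^*$ is not divisible by any element of $M_\A\setminus\{\x^\alpha\}$.

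The one point that requires a little care — and which I expect to be the main (though modest) obstacle — is the justification that in an $\F_q$-polynomial identity $\x^\alpha=P(\y_1,\ldots,\y_r)$ one may legitimately "compare monomials." The subtlety is that distinct exponent vectors $(e_1,\ldots,e_r)$ could in principle produce the same monomial $\y_1^{e_1}\cdots\y_r^{e_r}$ in $\mathcal A_n$, so cancellation among the terms of $P$ is conceivable; one must ensure the resulting combination still contains at least one term equal to $\x^\alpha$ with nonzero coefficient. This is handled by the observation that $\x^\alpha$ is a monomial: group the terms of the expansion of $P(\y_1,\ldots,\y_r)$ by the actual monomial in $\mathcal A_n$ they represent, obtaining $\x^\alpha=\sum_{\beta} c_\beta\,\x^\beta$ with the $\x^\beta$ pairwise distinct monomials of $\mathcal A_n$ and each $\x^\beta$ a product of the $\y_i$'s; by uniqueness of the monomial expansion (the representation of a polynomial as $\sum a_\alpha\x^\alpha$ is unique), exactly one $\x^\beta$ equals $\x^\alpha$, and that $\x^\beta$ arises from at least one genuine term $\y_1^{e_1}\cdots\y_r^{e_r}$ of positive weight in $P$. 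Once this is in place, the case analysis above closes the argument, and together with the preceding paragraph (which shows $M_\A^*\cup V_h$ generates $R_\A$) we conclude that $M_\A^*\cup V_h$ is in fact a minimal generating set in the sense that no one of the monomials $\x^\alpha\in M_\A^*$ is redundant.
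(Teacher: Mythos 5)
Your proposal is correct and follows essentially the same route as the paper: the paper reduces the claim to membership of $\x^{\alpha}$ in the monomial ideal generated by $M_{\A}^*\setminus\{\x^{\alpha}\}$ and then cites the standard fact that a monomial lies in a monomial ideal iff it is divisible by one of the generators, which is exactly the monomial-comparison argument you carry out by hand before invoking the defining (indivisibility) property of $M_{\A}^*$. The extra care you take about possible cancellation among terms of $P$ is precisely the content of that standard lemma, so there is no gap.
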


\begin{proof}
Suppose that there is an element $\x^{\alpha}\in M_{\A}^*$ with this property; such a polynomial expression in terms of the elements in $M_{\A}^*\setminus \{\x^{\alpha}\}$ has constant term equals zero (we can see this, for instance, evaluating at the point $(0, \ldots, 0)\in \F_q^n$). In particular, $\x^{\alpha}$ belongs to the monomial ideal generated by the set $M_{\A}^*\setminus \{\x^{\alpha}\}$ in $\mathcal A_n$. But it is well known that, a monomial belongs to the monomial ideal $I\subset \mathcal A_n$ generated by a set $C$ if and only if the monomial itself is divisible by some element in $C$. But, from definition, $\x^{\alpha}$ is not divisible by any element of $M_{\A}\setminus \{ \x^{\alpha}\}\supseteq M_{\A}^*\setminus \{ \x^{\alpha}\}$ and we get a contradiction.

\end{proof}

\section{The structure of the fixed-point subring $R_{\A}$}
In the previous section we characterize the fixed-point subring $R_{\A}$ in the case that $\A$ is of type $(0, t)$ or $(h, 0)$. We show how this extends to the general case.  

\begin{proposition}\label{fixed-ring}
Suppose that $\A\in G^n$ is of type $(h, t)$. There exist unique elements $\A_1$ and $\A_2$ with the following properties: 
\begin{enumerate}[(i)]
\item $\A_1$ if of type $(h, 0)$.
\item The first $h$ and the last $n-h-t$ coordinates of $\A_2$ are the identity matrix and the remaining $t$ (in the middle) are elements of $t-$type.
\item $\A=\A_1\cdot \A_2$. 
\end{enumerate}
Moreover, $R_{\A}=R_{\A_1}\cap R_{\A_2}$ and, in particular, $R_{\A}$ is the $\F_q$-algebra generated by the elements of $M_{\A_1}^*\cup L(h, t)\cup V_{h+t}$. 
\end{proposition}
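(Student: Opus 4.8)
\emph{Proof idea.} The plan is to establish three things in order: the factorization $\A=\A_1\A_2$ (existence and uniqueness), the identity $R_{\A}=R_{\A_1}\cap R_{\A_2}$, and the description of the generating set. The step I expect to be the main obstacle is the inclusion $R_{\A}\subseteq R_{\A_1}\cap R_{\A_2}$ in the second part; everything else is bookkeeping built on the results of Section~2.

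For the factorization, write $\A=(A_1,\dots,A_n)$; conditions~(i) and~(ii) pin down every coordinate. For $1\le i\le h$ the $i$-th coordinate of $\A_2$ must be $\mathbf I$, so that of $\A_1$ is forced to be $A_i$; for $h<i\le n$ the $i$-th coordinate of $\A_1$ must be $\mathbf I$ (as $\A_1$ is of type $(h,0)$), so that of $\A_2$ is forced to be $A_i$, and since $\A$ is of type $(h,t)$ these are of $t$-type for $h<i\le h+t$ and equal $\mathbf I$ for $i>h+t$, exactly matching~(ii). Conversely this choice of $\A_1,\A_2$ satisfies (i)--(iii), so the pair exists and is unique. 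I would also record at this point that $\A_1$ and $\A_2$ commute, since in each coordinate one of the two matrices is $\mathbf I$.

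For $R_{\A}=R_{\A_1}\cap R_{\A_2}$, the inclusion ``$\supseteq$'' is immediate from Lemma~\ref{action}(d): $\A\circ f=(\A_1\A_2)\circ f=\A_1\circ(\A_2\circ f)$, which equals $f$ whenever $\A_1\circ f=\A_2\circ f=f$. For ``$\subseteq$'' the key observation is that $\ord(\A_1)$ divides $q-1$ while $\ord(\A_2)$ divides $p$, so $\gcd(\ord(\A_1),\ord(\A_2))=1$; combined with the commutativity of $\A_1$ and $\A_2$ this forces both $\A_1$ and $\A_2$ to be powers of $\A=\A_1\A_2$ (for instance $\A^{\ord(\A_2)}=\A_1^{\ord(\A_2)}$ generates $\langle\A_1\rangle$ and $\A^{\ord(\A_1)}=\A_2^{\ord(\A_1)}$ generates $\langle\A_2\rangle$, hence $\A_1,\A_2\in\langle\A\rangle$). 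Iterating Lemma~\ref{action}(d), any $f\in R_{\A}$ satisfies $\A^{j}\circ f=f$ for all $j\ge0$, so in particular $\A_1\circ f=\A_2\circ f=f$. The degenerate cases $h=0$ and $t=0$, in which $\A_1$ or $\A_2$ equals $\mathbf I$, are subsumed.

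For the generating set I would apply Lemma~\ref{separation} with the partition $\{x_1,\dots,x_h\}\mid\{x_{h+1},\dots,x_n\}$: every nonzero $f\in\mathcal A_n$ is uniquely $f=\sum_{\beta}\x^{\beta}P_{\beta}$ with the $\x^{\beta}$ distinct monomials in $x_1,\dots,x_h$ and the $P_{\beta}$ nonzero elements of $\F_q[x_{h+1},\dots,x_n]$. Since $\A_1$ scales $x_1,\dots,x_h$ and fixes the remaining variables, $\A_1\circ f=\sum_{\beta}a_{\beta}\x^{\beta}P_{\beta}$ in the notation of Proposition~\ref{homothety}, so by uniqueness $f\in R_{\A_1}$ iff $a_{\beta}=1$ for every $\beta$; by the argument in the proof of Proposition~\ref{homothety}, together with the fact that every element of $M_{\A_1}$ is a product of elements of $M_{\A_1}^{*}$, this is equivalent to each $\x^{\beta}$ being $1$ or a product of elements of $M_{\A_1}^{*}$. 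On the other hand $\A_2$ fixes $x_1,\dots,x_h$ and coincides with the automorphism $\Psi(h,t)$ of Corollary~\ref{trans} on $x_{h+1},\dots,x_n$, so $\A_2\circ f=\sum_{\beta}\x^{\beta}(\A_2\circ P_{\beta})$, and by uniqueness $f\in R_{\A_2}$ iff each $P_{\beta}$ is $\Psi(h,t)$-invariant, hence (Corollary~\ref{trans}) lies in the $\F_q$-algebra generated by $L(h,t)\cup V_{h+t}$. Combining these with $R_{\A}=R_{\A_1}\cap R_{\A_2}$, we get that $f\in R_{\A}$ iff in its decomposition every $\x^{\beta}$ is a product of elements of $M_{\A_1}^{*}$ and every $P_{\beta}\in\F_q[L(h,t)\cup V_{h+t}]$; any such $f$ lies in $\F_q[M_{\A_1}^{*}\cup L(h,t)\cup V_{h+t}]$. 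The reverse inclusion is clear, since each element of $M_{\A_1}^{*}$, each element of $L(h,t)$, and each variable in $V_{h+t}$ is fixed by both $\A_1$ and $\A_2$ and so lies in $R_{\A}$; this also shows $R_{\A}$ is finitely generated.
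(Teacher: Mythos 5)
Your proposal is correct and follows essentially the same route as the paper: the same coordinate-wise construction of $\A_1$ and $\A_2$, the same coprimality-of-orders argument (the paper writes explicit exponents $rD_1$, $sD_2$ where you argue via generators of $\langle\A\rangle$, but it is the same idea) to get $R_{\A}\subseteq R_{\A_1}\cap R_{\A_2}$, and the same use of Lemma~\ref{separation} with the partition $\{x_1,\dots,x_h\}$ versus the rest, reducing to Proposition~\ref{homothety} and Corollary~\ref{trans} via uniqueness of the decomposition. No gaps.
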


\begin{proof}
Write $\A=(A_1, \ldots, A_n)$ and set $\A_1=(A_1, \ldots, A_h, I, \ldots, I)\in G^n$, where each $A_i$ is of $h$-type. Given $\A$ of type $(h, t)$, such an $\A_1$ is unique. We notice that $\A_2=\A_1^{-1}\A$ has the required properties. 

It turns out that the elements $\A_1$ and $\A_2$ commute in $G^n$ and $D_1=\ord(\A_1)$, $D_2=\ord(\A_2)$ divide $q-1$ and $p$, respectively. In particular, since $p$ and $q-1$ are relatively prime, then so are $D_1$ and $D_2$. Let $r$ and $s$ be positive integers such that $rD_1\equiv 1\pmod {D_2}$ and $sD_2\equiv 1 \pmod {D_1}$. If $f\in \mathcal A_n$ is such that $\A\circ f=f$, then $$\A^{rD_1}\circ f=\A^{sD_2}\circ f=f.$$ Since $\A_1$ and $\A_2$ commute in $G^n$, it follows that $\A^{rD_1}=\A_1^{rD_1}\cdot \A_2^{rD_1}=\A_2$ and $\A^{sD_2}=\A_1^{sD_2}\cdot \A_2^{sD_2}=\A_1$. In particular, $\A_1\circ f=\A_2\circ f=f$. Therefore $R_{\A}\subseteq R_{\A_1}\cap R_{\A_2}$. The reverse inclusion is trivial and then $R_{\A}=R_{\A_1}\cap R_{\A_2}$. 

Let $R$ be the $\F_q$-algebra generated by the elements of $M_{\A_1}^*\cup L(h, t)\cup V_{h+t}$. From Theorems \ref{translation}, \ref{homothety} and Corollary \ref{trans}, we see that any element $f\in R$ satisfies $\A_1\circ f=\A_2\circ f=f$ and then $R\subseteq R_{\A_1}\cap R_{\A_2}=R_{\A}$. Conversely, suppose that $f\in R_{\A}=R_{\A_1}\cap R_{\A_2}$. From Lemma \ref{separation}, $f$ can be written uniquely as
$$f=\sum_{\alpha}\x^{\alpha}P_{\alpha}$$
where $B\subset \N^{h}$ is a finite set, each $\x^{\alpha}$ is a monomial in $\F_q[x_1, \ldots, x_h]$ and $P_{\alpha}$ is a nonzero polynomial in $\F_{q}[x_{h+1}, \ldots, x_n]$. Since $\A_2$ fixes each monomial $\x^{\alpha}, \alpha \in B$ and $\A_2\circ f=f$, we obtain $\A_2 \circ P_{\alpha}=P_{\alpha}$ and then, from Corollary \ref{trans}, we see that each $P_{\alpha}$ is a polynomial expression in terms of the elements in $L(h, t)\cup V_{h+t}$. Also, since $\A_1$ fixes each polynomial $P_{\alpha}, \alpha \in B$ and $\A_1\circ f=f$, we obtain $(\A_1 \circ \x^{\alpha})\cdot P_{\alpha}=\x^{\alpha}\cdot P_{\alpha}$ and then, since $P_{\alpha}$ is nonzero, we conclude that $\A_1 \circ \x^{\alpha}=\x^{\alpha}$. Therefore, from Theorem \ref{homothety}, we have that each $\x^{\alpha}$ is a polynomial expression in terms of the elements in $M_{\A_1}^*$. In particular, $f$ must be a polynomial expression in terms of the elements of $M_{\A_1}^*\cup L(h,t )\cup V_{h+t}$, i.e., $f\in R$. Thus $R=R_{\A}$, as desired.
\end{proof}
From now, if $\A$ is an element of type $(h, t)$, we say that the identity $\A=\A_1\A_2$ as in Proposition \ref{fixed-ring} is the {\it canonical decomposition} of $\A$. 

For instance, consider the element $\A\in G^5$ of type $(2, 2)$ with canonical decomposition $\A=\A_1\A_2$, where $H(\A)=H(\A_1)=\{-1, -1\}$. The ring $R_{\A}$ comprises the polynomials $f\in A_5$ such that
$$f(x_1, \ldots, x_5)=f(-x_1, -x_2, x_3+1, x_4+1, x_5).$$
In this case, Proposition \ref{fixed-ring} says that $R_{\A}=\F_q[x_1^2, x_2^2, x_1x_2, x_3-x_4, x_4^p-x_4, x_5]$. 

We ask if $M_{\A_1}^*\cup L(h, t)\cup V_{h+t}$ contains redundant elements, i.e., can we remove some elements and still generate the ring $R_{\A}$? This lead us to introduce the following:

\begin{definition}
Suppose that $R\in \mathcal A_n$ is a finitely generated $\F_q$-algebra and let $S$ be a set of generators for $R$. We say that $S$ is a minimal generating set for $R$ if there is no proper subset $S'\subset S$ such that $S'$ generates $R$. 
\end{definition}

In other words, minimal generating sets $S$ are those ones with the property that no element $E$ of $S$ can be written as a polynomial expression in terms of the elements in $S\setminus \{E\}$. We will prove that the set of generators for $R_{\A}$ given in Proposition \ref{fixed-ring} is minimal, but first we explore the {\it algebraic independence} on the set $M_{\A_1}^*\cup L(h, t)\cup V_{h+t}$ (which is, somehow, stronger than the concept of redundant elements).

\subsection{Algebraic independence in positive characteristic}
If $K$ is an arbitrary field, given polynomials $f_1, \ldots, f_m$ in $K[x_1, \ldots, x_n]$, we say that $f_1, \ldots, f_m$ are {\it algebraically independent} if there is no nonzero polynomial $P\in K[y_1, \ldots, y_m]$ such that $P(f_1, \ldots, f_m)$ is identically zero in $K[x_1, \ldots, x_n]$. For instance, the polynomials $x^2$ and $y$ are algebraically independent over $K[x, y]$, but $x^2+y^2$ and $x+y$ are not algebraically independent over $K[x, y]$, where $K$ is any field of characteristic two; $x^2+y^2+(x+y)^2=0$. From definition, if $f_1, \ldots, f_m$ are algebraically independent, any subset of such polynomials has the same property.

Given polynomials $f_1, \ldots, f_n$ in $K[x_1, \ldots, x_n]$, we define their {\it Jacobian} as the polynomial
$$\det (J(f_1, \ldots, f_n)),$$
where $J(f_1, \ldots, f_n)$ is the $n\times n$ matrix with entries $a_{ij}=\frac{\partial f_i}{\partial x_j}$; here, $\frac{\partial f_i}{\partial x_j}$ denotes the {\it partial derivative} of $f_i$ with respect to $x_j$. The well known {\it Jacobian Criterion} says that, over characteristic zero, a set of $n$ polynomials in $K[x_1, \ldots, x_n]$ is algebraically independent if and only if their Jacobian is nonzero. This may fail in positive characteristic; notice that $x^p$ and $y^p$ are algebraically independent over $\F_p[x, y]$, but $\det (J(x^p, y^p))=0$. However, we have at least one direction of this result:

\begin{theorem}[Jacobian Criterion - weak version] Suppose that $f_1, \ldots, f_n$ is a set of polynomials in $\F_q[x_1, \ldots, x_n]$ such that their Jacobian is nonzero. Then $f_1, \ldots, f_n$ are algebraically independent.
\end{theorem}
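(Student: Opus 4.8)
The plan is to prove the contrapositive in the following form: if $f_1, \ldots, f_n$ are algebraically dependent, then their Jacobian vanishes. So suppose there is a nonzero polynomial $P \in \F_q[y_1, \ldots, y_n]$ with $P(f_1, \ldots, f_n) = 0$ in $\mathcal{A}_n$. Among all such $P$, choose one of minimal total degree; I would want to argue that $P$ cannot be of the form $Q^p$ for any polynomial $Q$, since $Q(f_1,\ldots,f_n)^p = 0$ would force $Q(f_1,\ldots,f_n)=0$ with $\deg Q < \deg P$, contradicting minimality (using that $\mathcal{A}_n$ is an integral domain, and that raising to the $p$-th power is injective in characteristic $p$). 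Over $\F_q$, a polynomial is a $p$-th power precisely when every monomial appearing in it has all exponents divisible by $p$, equivalently when $\partial P/\partial y_j = 0$ for all $j$; hence the minimality of $P$ guarantees that $\partial P/\partial y_j$ is a nonzero polynomial for at least one index $j$.

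The next step is the chain rule. Differentiating the identity $P(f_1, \ldots, f_n) = 0$ with respect to each variable $x_i$ gives
$$\sum_{j=1}^{n} \left.\frac{\partial P}{\partial y_j}\right|_{(f_1,\ldots,f_n)} \cdot \frac{\partial f_j}{\partial x_i} = 0, \qquad i = 1, \ldots, n.$$
Writing $v_j := \left.\frac{\partial P}{\partial y_j}\right|_{(f_1,\ldots,f_n)} \in \mathcal{A}_n$, this says that the vector $v = (v_1, \ldots, v_n)^{T}$ lies in the kernel of the transpose of the Jacobian matrix $J(f_1, \ldots, f_n)$ (viewed over the fraction field of $\mathcal{A}_n$). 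If $v \neq 0$, then $J(f_1, \ldots, f_n)$ is singular over that field, so $\det J(f_1, \ldots, f_n) = 0$ in $\mathcal{A}_n$, which is exactly what we want.

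So the crux is to show $v \neq 0$, i.e.\ that some $v_j = \left.(\partial P/\partial y_j)\right|_{(f_1,\ldots,f_n)}$ is a nonzero element of $\mathcal{A}_n$. We know $\partial P/\partial y_j$ is a nonzero polynomial in $\F_q[y_1,\ldots,y_n]$ of degree strictly less than $\deg P$; if it evaluated to $0$ at $(f_1, \ldots, f_n)$, that would be an algebraic dependence of smaller degree than $P$ — contradiction. (One must only check $\partial P/\partial y_j$ really has smaller total degree, which is immediate, and is nonzero for the $j$ selected above.) This contradiction with the minimality of $P$ forces $v \neq 0$, completing the argument. The main obstacle, and the only place where positive characteristic genuinely bites, is precisely ensuring that differentiating $P$ does not annihilate it; this is handled by the reduction to $P$ not a $p$-th power via the minimal-degree choice, together with the characterization of $p$-th powers over $\F_q$ as those polynomials all of whose partial derivatives vanish.
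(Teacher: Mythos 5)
The paper does not actually prove this theorem: it is quoted as a known fact (the ``weak'' direction of the Jacobian criterion), so there is no in-paper argument to compare against. Your proposal is a correct and complete proof of the standard kind. The two places where care is needed are exactly the ones you flag: (1) a minimal-degree dependence $P$ cannot have all partial derivatives zero, because over the perfect field $\F_q$ that would force $P=Q^p$, and then $Q(f_1,\ldots,f_n)^p=0$ in the integral domain $\mathcal A_n$ gives a lower-degree dependence $Q$; it is worth saying explicitly that perfectness of $\F_q$ is what lets you extract the $p$-th root of the coefficients, since over a non-perfect field ``all partials vanish'' does not imply ``$p$-th power.'' (2) The evaluated gradient vector $v$ is nonzero again by minimality, since $\partial P/\partial y_j$ has strictly smaller total degree whenever it is nonzero. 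With $v$ a nonzero kernel vector of the transposed Jacobian over the fraction field of $\mathcal A_n$, the determinant vanishes, which is the contrapositive of the statement. No gaps.
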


In particular, we obtain the following:

\begin{corollary}\label{AI}
For any nonnegative integers $h$ and $t$ such that $h+t\le n$ and any sequence $d_1, \ldots, d_h$ (which is empty for $h=0$) of divisors of $q-1$, the $n$ elements of $\{x_1^{d_1}, \ldots, x_h^{d_h}\}\cup L(h, t)\cup V_{h+t}$ are algebraically independent.
\end{corollary}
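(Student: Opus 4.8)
The plan is to apply the weak Jacobian Criterion to the $n$ polynomials listed, so the whole task reduces to showing that the Jacobian determinant $\det J$ of
$$x_1^{d_1},\ \ldots,\ x_h^{d_h},\ \ x_{h+1}-x_{h+2},\ \ldots,\ x_{h+t-1}-x_{h+t},\ \ x_{h+t}^p-x_{h+t},\ \ x_{h+t+1},\ \ldots,\ x_n$$
is a nonzero element of $\F_q[x_1,\ldots,x_n]$ (with the obvious conventions when $h=0$ or $t=0$ or $t=1$). First I would observe that the partial derivative $\partial(x_i^{d_i})/\partial x_j$ vanishes unless $j=i$, so the first $h$ rows of $J$ form a block supported entirely on the first $h$ columns, with diagonal entries $d_i x_i^{d_i-1}$. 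Crucially, since each $d_i$ divides $q-1$ and $q-1$ is coprime to $p=\Char\F_q$, we have $d_i\ne 0$ in $\F_q$, so this upper-left $h\times h$ block is diagonal with nonzero determinant $d_1\cdots d_h\, x_1^{d_1-1}\cdots x_h^{d_h-1}$.

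Next I would note that the remaining generators — the differences $x_{h+i}-x_{h+i+1}$, the polynomial $x_{h+t}^p-x_{h+t}$, and the bare variables $x_{h+t+1},\ldots,x_n$ — involve only the variables $x_{h+1},\ldots,x_n$, so their partial derivatives with respect to $x_1,\ldots,x_h$ are all zero. Hence $J$ is block lower-triangular (in fact block diagonal) with respect to the splitting of indices into $\{1,\ldots,h\}$ and $\{h+1,\ldots,n\}$, and $\det J$ equals the product of the determinant of the upper-left $h\times h$ block computed above and the determinant of the $(n-h)\times(n-h)$ lower-right block $J'$ built from the last $n-h$ generators in the variables $x_{h+1},\ldots,x_n$. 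So it suffices to show $\det J'\ne 0$.

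For $\det J'$ I would exhibit the matrix explicitly. Relabel $y_i=x_{h+i}$ for $1\le i\le n-h$. For $t\ge 2$ the rows of $J'$ are: row $i$ ($1\le i\le t-1$) coming from $y_i-y_{i+1}$, which is $e_i-e_{i+1}$; row $t$ coming from $y_t^p-y_t$, which (since the derivative of $y_t^p$ is zero in characteristic $p$) is $-e_t$; and rows $t+1,\ldots,n-h$ coming from the variables $y_{t+1},\ldots,y_{n-h}$, giving $e_{t+1},\ldots,e_{n-h}$. This matrix is upper triangular after noting that the last $n-h-t$ rows already pick out the last coordinates; restricting to the first $t$ coordinates leaves the $t\times t$ matrix whose rows are $e_1-e_2,\,e_2-e_3,\,\ldots,\,e_{t-1}-e_t,\,-e_t$, which is upper triangular with diagonal entries $1,1,\ldots,1,-1$, hence has determinant $-1\ne 0$. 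The cases $t=1$ (row $-e_1$, then $e_2,\ldots,e_{n-h}$, determinant $-1$) and $t=0$ (identity matrix, determinant $1$) are immediate. Combining, $\det J = \pm\, d_1\cdots d_h\, x_1^{d_1-1}\cdots x_h^{d_h-1}\ne 0$ in $\F_q[x_1,\ldots,x_n]$, and the weak Jacobian Criterion gives the algebraic independence. There is no real obstacle here: the only point that needs care — and the reason the hypothesis $d_i\mid q-1$ matters — is ensuring the scalars $d_i$ do not vanish modulo $p$, which is exactly where coprimality of $q-1$ and $p$ is used.
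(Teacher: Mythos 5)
Your proof is correct and follows exactly the paper's argument: compute the Jacobian of the $n$ listed polynomials, observe that it equals $\varepsilon(t)\,d_1\cdots d_h\,x_1^{d_1-1}\cdots x_h^{d_h-1}$ (nonzero because each $d_i$ divides $q-1$ and hence is prime to $p$), and conclude by the weak Jacobian Criterion. The paper states this as a ``direct verification''; you have simply written out the block-triangular structure explicitly.
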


\begin{proof}
By a direct verification we see that the Jacobian of the $n$ polynomials in the set $\{x_1^{d_1}, \ldots, x_h^{d_h}\}\cup L(h, t)\cup V_{h+t}$ equals $1$ if $h=t=0$ and $$\varepsilon(t)\cdot (d_1\cdots d_h)\cdot (x_1^{d_1-1}\cdots x_h^{d_h-1}),$$ if $h\ne 0$, where
$\varepsilon(t)=1$ for $t=0$ and $\varepsilon(t)=-1$ for $t\ne 0$. Since each $d_i$ is a divisor of $q-1$ (which is prime to the characteristic $p$), this Jacobian is never zero and the result follows from the (weak) Jacobian Criterion for $\F_q$.
\end{proof}

We are now able to prove the minimality of $M_{\A_1}^*\cup L(h, t)\cup V_{h+t}$ as a set of generators for $R_{\A}$.

\begin{proposition}
Let $\A\in G^n$ be an element of type $(h, t)$ and $\A=\A_1\A_2$ its canonical decomposition. Then $M_{\A_1}^*\cup L(h, t)\cup V_{h+t}$ is a minimal generating set for $R_{\A}$. 
\end{proposition}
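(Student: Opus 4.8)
The plan is to show that no element of $M_{\A_1}^*\cup L(h,t)\cup V_{h+t}$ can be written as a polynomial expression in terms of the others. Let me denote this set by $S$ and split it into three blocks: the monomial part $M_{\A_1}^*$ in the variables $x_1,\ldots,x_h$, the translation part $L(h,t)$ in the variables $x_{h+1},\ldots,x_{h+t}$, and the trivial part $V_{h+t}=\{x_{h+t+1},\ldots,x_n\}$. The blocks involve disjoint sets of variables, which is the structural fact I will lean on repeatedly. First I would observe that $S$ contains the $n-t$ algebraically independent elements $\{x_1^{d_1},\ldots,x_h^{d_h}\}\cup V_{h+t}$ together with $x_{h+1}-x_{h+2},\ldots,x_{h+t-1}-x_{h+t}$; adjoining the latter $t-1$ elements keeps things algebraically independent by Corollary~\ref{AI}. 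So the only generators that are \emph{not} automatically algebraically independent from the rest are the ``mixed'' monomials of $M_{\A_1}^*$ and the single element $x_{h+t}^p-x_{h+t}$; everything else is handled instantly by algebraic independence, since an element of an algebraically independent set cannot lie in the $\F_q$-algebra generated by the others.

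Next I would handle $x_{h+t}^p-x_{h+t}$. Suppose it equals $P$ evaluated at the remaining generators. Using Lemma~\ref{separation} to separate the variables $x_1,\ldots,x_{h+t-1}$ from $x_{h+t},\ldots,x_n$, and noting that among the remaining generators only $x_{h+t-1}-x_{h+t}$ involves $x_{h+t}$ while the mixed monomials and $x_j^{d_j}$ involve only $x_1,\ldots,x_h$, I would specialize $x_1=\cdots=x_{h+t-1}=0$ (equivalently, pass to the quotient killing those variables). This kills every mixed monomial and every $x_j^{d_j}$, sends $x_{h+t-1}-x_{h+t}\mapsto -x_{h+t}$, and leaves $x_{h+t}^p-x_{h+t}$ a polynomial in $x_{h+t}$ and $x_{h+t+1},\ldots,x_n$. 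But $x_{h+t},x_{h+t+1},\ldots,x_n$ are algebraically independent, so $x_{h+t}^p-x_{h+t}$ would have to be a polynomial in $x_{h+t}$ alone expressed in terms of $-x_{h+t}$ — fine so far — yet this forces $P$ to not really involve the mixed monomials for this to be consistent; the cleaner route is to compare degrees in the variables $x_1,\ldots,x_h$: the left side has degree $0$ in those variables, so $P$ cannot involve any mixed monomial nor any $x_j^{d_j}$ nontrivially, reducing to the univariate statement of Theorem~\ref{LR} that $x_{h+t}^p-x_{h+t}$ is not a nonconstant polynomial in lower-degree things — here simply that it cannot be a polynomial in $x_{h+t-1}-x_{h+t}$ and the $x_j$'s of $V_{h+t}$ because of the independence of $x_{h+t},x_{h+t+1},\ldots$ together with a degree-in-$x_{h+t}$ count modulo $p$.

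The main obstacle, and the case I would spend the most care on, is a mixed monomial $\x^{\alpha}\in M_{\A_1}^*$. Here I cannot use algebraic independence directly since $M_{\A_1}^*$ generally is not algebraically independent. Instead I would argue with the monomial structure: suppose $\x^{\alpha}=P(\ldots)$ where the arguments are the other elements of $S$. Evaluating at $(0,\ldots,0)$ shows $P$ has zero constant term, and specializing $x_{h+1}=\cdots=x_n=0$ kills every element of $L(h,t)\cup V_{h+t}$ and leaves $\x^{\alpha}$ equal to a polynomial expression, with zero constant term, in the elements of $M_{\A_1}^*\setminus\{\x^{\alpha}\}$ inside $\F_q[x_1,\ldots,x_h]$. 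Then I invoke exactly Lemma~\ref{monomial-minimal}, which states that $\x^{\alpha}$ cannot be written as a polynomial expression in terms of the elements of $M_{\A_1}^*\setminus\{\x^{\alpha}\}$ — contradiction. The only subtlety to spell out is that the specialization $x_{h+1}=\cdots=x_n=0$ is a ring homomorphism $\mathcal A_n\to\F_q[x_1,\ldots,x_h]$ carrying the purported identity to an identity of the required shape, which is immediate. Assembling the three cases — algebraic independence for the ``pure'' generators, a degree count for $x_{h+t}^p-x_{h+t}$, and Lemma~\ref{monomial-minimal} for mixed monomials — gives that $S$ is minimal.
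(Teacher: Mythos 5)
Your overall toolkit---Lemma~\ref{monomial-minimal} plus a specialization for the monomial generators, Corollary~\ref{AI} for the translation generators, and a variable count for $V_{h+t}$---is the same as the paper's, and your treatment of the mixed monomials (specialize $x_{h+1}=\cdots=x_n=0$, then invoke Lemma~\ref{monomial-minimal}) is a clean, correct rendering of the paper's reduction modulo the ideal generated by $L(h,t)\cup V_{h+t}$. The gap is in your first step. Minimality of $S$ requires that no generator lie in the $\F_q$-algebra generated by \emph{all} the remaining elements of $S$, and $S$ contains the mixed monomials, which do \emph{not} belong to the algebraically independent set $\{x_1^{d_1},\ldots,x_h^{d_h}\}\cup L(h,t)\cup V_{h+t}$ of Corollary~\ref{AI}. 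Algebraic independence of a subset $T\subseteq S$ only rules out expressing an element of $T$ in terms of $T$ minus that element; it says nothing about expressions that also use the extra generators. (A one-element set such as $\{x^2y^2\}$ is algebraically independent, yet $x^2y^2=(xy)^2$ becomes redundant the moment $xy$ is also a generator---this is exactly the phenomenon that forces the passage from $M_{\A_1}$ to $M_{\A_1}^*$.) So the sentence ``everything else is handled instantly by algebraic independence'' does not actually dispose of the pure powers $x_j^{d_j}$, the differences $x_{h+i}-x_{h+i+1}$, or the variables of $V_{h+t}$ whenever mixed monomials are present, i.e.\ precisely in the cases where minimality is a nontrivial assertion.

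The repair costs nothing, and you already have the pieces. For $x_j^{d_j}$: it is itself an element of $M_{\A_1}^*$, so your own third case (specialize $x_{h+1}=\cdots=x_n=0$, then Lemma~\ref{monomial-minimal}) applies verbatim; there is no reason to separate the pure powers from the mixed monomials, since the correct tool for every element of $M_{\A_1}^*$ is divisibility in the monomial ideal, not the Jacobian. For the generators in $L(h,t)\cup V_{h+t}$: first specialize $x_1=\cdots=x_h=0$, which kills every element of $M_{\A_1}^*$ (each is a nonconstant monomial in $x_1,\ldots,x_h$) and fixes every element of $L(h,t)\cup V_{h+t}$; only then does Corollary~\ref{AI}, applied to $L(h,t)\cup V_{h+t}$, finish the argument---and it also handles $x_{h+t}^p-x_{h+t}$ directly, making your degree-in-$x_{h+t}$-mod-$p$ digression unnecessary. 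This two-sided specialization is precisely the content of the paper's reductions modulo the ideals $I_T$ and $I_H$.
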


\begin{proof}
We already know that this set is a generator. To prove the minimality of such set, let $I_T$ be the ideal generated by $L(h, t)\cup V_{h+t}$ over the ring $R_{\A}$  (with the convention that $I_T$ is the zero ideal if the corresponding set is empty). We first show that no element of $M_{\A_1}^*$ is redundant. For this, suppose that an element $\x^{\alpha}\in M_{\A_1}^*$ is a polynomial expression in terms of the elements in $M_{\A_1}^*\cup L(h, t)\cup V_{h+t}\setminus\{\x^{\alpha}\}$. Looking at the quotient $R_{\A}/I_T$, this yields an equality
$$x^{\alpha}\equiv P_{\alpha} \pmod{I_T},$$
where $P_{\alpha}$ is a polynomial expression in terms of the elements in $M_{\A_1}^*\setminus\{\x^{\alpha}\}$. In other words, $\x^{\alpha}-P_{\alpha}$ is an element of $I_T$. One can see that this implies $\x^{\alpha}-P_\alpha=0$, a contradiction with Lemma \ref{monomial-minimal}. In the same way (taking $I_H$ as the ideal generated by $M_{\A_1}^*\cup V_{h+t}$ over $R_{\A}$) we see that if there is a redundant element $T$ in $L(h, t)$, then such a $T$ can be written as a polynomial expression in terms of the elements of $L(h, t)\setminus \{T\}$. But this yields a nonzero polynomial $P\in \F_q[y_1, \ldots, y_{t}]$ such that
$$P(T_1, \ldots, T_t)=0,$$
where $T_i$ runs through the elements of $L(h, t)$, which is impossible since Lemma \ref{AI} ensures that these elements are algebraically independent. Finally, since the variable $x_i$ does not appear in the set $M_{\A_1}^*\cup L(h, t)\cup V_{h+t}\setminus\{x_i\}$ for $h+t<i\le n$, it follows that no element $x_i\in V_{h+t}$ can be written as a polynomial expression in terms of the elements of $M_{\A_1}^*\cup L(h, t)\cup V_{h+t}\setminus\{x_i\}$. This finishes the proof.
\end{proof}

For an element $\A$ of type $(h, t)$ with canonical decomposition $\A=\A_1\A_2$ we say that $S_{\A}:=M_{\A_1}^*\cup L(h, t)\cup V_{h+t}$ is the {\it canonical generating set} for $R_{\A}$. We also set $N_{\A}=|S_{\A}|$ which is, from definition, equal to $N(\A_1)+n$.

\subsection{Free algebras}
Given a field $K$ and a finitely generated $K$-algebra $R\subseteq K[x_1, \ldots, x_n]$, we say that $R$ is {\it free} if $R$ can be generated by a finite set $f_1, \ldots, f_m\in K[x_1, \ldots, x_n]$ comprising algebraically independent elements. In other words, $R$ (as a ring) is isomorphic to the ring of $m$ variables over $K$, for some $m$.

As follows, we have a simple criterion for $R_{\A}$ to be a free $\F_q$-algebra.
\begin{theorem}\label{free}
Let $\A\in G^n$ be an element of type $(h, t)$ and $\A=\A_1\A_2$ its canonical decomposition. Write $H(\A)=H(\A_1)=\{a_1, \ldots, a_h\}$ for $h\ge 1$ and $d_i=\ord(a_i)>1$ for $1\le i\le h$. The following are equivalent:
\begin{enumerate}[(i)]
\item $R_{\A}$ is free.
\item $h=0, 1$ or $h>1$ and the numbers $d_i$ are pairwise coprime.
\item $N(\A_1)=0$.
\item $R_{\A}$ is isomorphic to $\mathcal A_n$.
\end{enumerate}
\end{theorem}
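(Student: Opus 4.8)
\emph{Proof plan.} I would separate the easy implications from the two substantive ones. The implications (iii) $\Rightarrow$ (iv) $\Rightarrow$ (i) are immediate: if $N(\A_1)=0$ then, since $\{x_1^{d_1},\dots,x_h^{d_h}\}\subseteq M_{\A_1}^*$ always holds, one has $M_{\A_1}^*=\{x_1^{d_1},\dots,x_h^{d_h}\}$ (empty when $h=0$), so the canonical generating set $S_{\A}=\{x_1^{d_1},\dots,x_h^{d_h}\}\cup L(h,t)\cup V_{h+t}$ consists of the $n$ algebraically independent elements of Corollary \ref{AI}; hence $R_{\A}=\F_q[S_{\A}]\cong\mathcal A_n$, which is free. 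I would also record at the outset that $R_{\A}$ has transcendence degree $n$ over $\F_q$, since it contains those $n$ algebraically independent elements and lies inside $\mathcal A_n$; this makes (i) and (iv) equivalent, because a free presentation of $R_{\A}$ must then use exactly $n$ algebraically independent generators. What remains is (i) $\Rightarrow$ (iii) and the combinatorial equivalence (ii) $\Leftrightarrow$ (iii).

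For (i) $\Rightarrow$ (iii) the plan is to put a grading on $R_{\A}$ that makes ``number of generators'' a legitimate invariant. Using Proposition \ref{fixed-ring} and the block structure of $\A=\A_1\A_2$ --- the variable groups $x_1,\dots,x_h$, $x_{h+1},\dots,x_{h+t}$ and $x_{h+t+1},\dots,x_n$ are each preserved --- one obtains a ring isomorphism
$$R_{\A}\;\cong\;\F_q[M_{\A_1}^*]\ \otimes_{\F_q}\ \F_q[Z_1,\dots,Z_{n-h}],$$
where $\F_q[M_{\A_1}^*]\subseteq\F_q[x_1,\dots,x_h]$ is the monomial subalgebra generated by $M_{\A_1}^*$ and the second factor is a genuine polynomial ring coming from $L(h,t)\cup V_{h+t}$ via Theorem \ref{translation}. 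Grading by the total degree of the $x$-monomials together with $\deg Z_j=1$ makes this a connected $\N$-graded $\F_q$-algebra; for any such algebra, graded Nakayama gives that the minimum number of generators as a plain $\F_q$-algebra equals $\dim_{\F_q}(R_+/R_+^2)$. Applied to $\mathcal A_n$ with its standard grading this shows $\mathcal A_n$ needs $n$ generators, while for $R_{\A}$ a basis of $(R_{\A})_+/(R_{\A})_+^2$ is given by the indecomposable monomials of $\F_q[M_{\A_1}^*]$, i.e. the $|M_{\A_1}^*|=h+N(\A_1)$ elements of $M_{\A_1}^*$, together with the $n-h$ classes of the $Z_j$; so $R_{\A}$ needs exactly $n+N(\A_1)$ generators. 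If $R_{\A}$ is free then $R_{\A}\cong\mathcal A_n$, which needs $n$, forcing $N(\A_1)=0$.

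The equivalence (ii) $\Leftrightarrow$ (iii) is combinatorial. Put $\mathcal S=\{c\in\N^h : a_1^{c_1}\cdots a_h^{c_h}=1\}$; one checks that $\mathcal S$ is the submonoid of $\N^h$ generated by $C_{\A_1}$ and that $M_{\A_1}^*$ corresponds precisely to the indecomposable elements of $\mathcal S$, so $N(\A_1)=0$ means $\mathcal S$ is free on $d_1e_1,\dots,d_he_h$. For $h\le 1$ this is clear. If $h>1$ and the $d_i$ are pairwise coprime, then for each $j$ the element $a_j^{c_j}=\prod_{i\ne j}a_i^{-c_i}$ has order dividing $d_j$ and also dividing $\prod_{i\ne j}d_i$, which is coprime to $d_j$; hence $d_j\mid c_j$, so $\mathcal S=d_1\N\times\cdots\times d_h\N$ and $N(\A_1)=0$. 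Conversely, if $h>1$ and $g=\gcd(d_i,d_j)>1$ for some $i\ne j$, then since $\F_q^*$ is cyclic the subgroup $\an{a_i}\cap\an{a_j}$ has order $g$, so $a_i^{d_i/g}\in\an{a_j}$, say $a_i^{d_i/g}a_j^{\beta_j}=1$ with $1\le\beta_j\le d_j-1$; the monomial $x_i^{d_i/g}x_j^{\beta_j}$ is then indecomposable in $\mathcal S$ (in any splitting $c+c'$ the relation $a_i^{c_i}\in\an{a_i}\cap\an{a_j}$ forces $(d_i/g)\mid c_i$, leaving only trivial splittings), hence a ``mixed'' element of $M_{\A_1}^*$, so $N(\A_1)\ge 1$. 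Assembling the three parts yields the equivalence of (i)--(iv).

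The delicate point is (i) $\Rightarrow$ (iii): the obstruction to freeness is ultimately a count of generators, but an irredundant generating set of a finitely generated $\F_q$-algebra need not have the minimum possible size, so the real work is producing the connected graded model above --- in which the minimum number of generators genuinely is a ring invariant and equals $n+N(\A_1)$. The ``only if'' half of (ii) $\Leftrightarrow$ (iii) is elementary but still requires the explicit construction of a mixed indecomposable monomial when pairwise coprimality fails.
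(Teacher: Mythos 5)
Your proof is correct, but it routes the hard implication differently from the paper. The paper proves (i) $\Rightarrow$ (ii) directly: assuming $\gcd(d_1,d_2)=d>1$ it constructs a mixed monomial $x_1^{v_1}x_2^{v_2}\in M_{\A_1}^*$ and shows it is an irreducible element of $R_{\A}$ dividing $(x_1^{d_1})^j\cdot x_2^{d_2}$ but neither factor, so $R_{\A}$ is not a UFD and hence not a polynomial ring; the remaining implications (ii) $\Rightarrow$ (iii) $\Rightarrow$ (iv) $\Rightarrow$ (i) are as in your argument. You instead prove (i) $\Rightarrow$ (iii) by exhibiting $R_{\A}$ as a connected graded algebra $\F_q[M_{\A_1}^*]\otimes_{\F_q}\F_q[Z_1,\dots,Z_{n-h}]$ and invoking graded Nakayama to identify the minimal number of $\F_q$-algebra generators with $\dim_{\F_q}\bigl((R_{\A})_+/(R_{\A})_+^2\bigr)=n+N(\A_1)$, then pinning $R_{\A}\cong\mathcal A_n$ via transcendence degree; and you close the loop with a combinatorial (ii) $\Leftrightarrow$ (iii), whose forward half coincides with the paper's and whose converse builds essentially the same mixed indecomposable monomial $x_i^{d_i/g}x_j^{\beta_j}$ that the paper builds inside its UFD argument (your identification of $M_{\A_1}^*$ with the indecomposables of the solution monoid $\mathcal S$ is correct and is implicit in the paper's proof of Proposition \ref{homothety}). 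The trade-off: the paper's UFD route is more elementary but leans on the asserted-without-detail facts that $R_{\A}$ is a domain and that elements of $M_{\A_1}^*$ are irreducible in $R_{\A}$; your Nakayama route uses a standard graded-algebra tool and yields the stronger byproduct that $S_{\A}$ has the minimum possible cardinality among all generating sets of $R_{\A}$ (not merely that it is irredundant), which in fact sharpens the paper's minimality result. Both arguments are complete and valid.
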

\begin{proof}
\textbf{(i) $\rightarrow$ (ii)}: it sufficient to prove that, if $h>1$ and there are two elements $d_i$ and $d_j$ not relatively prime, then $R_{\A}$ is not a free $\F_q$-algebra. Without loss of generality, suppose that $\gcd(d_1, d_2)=d>1$. If $R_{\A}$ were a free $\F_q$-algebra, then it would be isomorphic to the ring $K[y_1, \ldots, y_m]$ of $m$ variables over $K$ for some $m$, which is always an {\it Unique Factorization Domain}. As we will see, the ring $R_{\A}$ does not have this property. 

For a given primitive element $\theta\in \F_{q}^*$, notice that $a_1=\theta^{\frac{q-1}{d_1}r_1}$ and $a_2=\theta^{\frac{q-1}{d_2}r_2}$ for some positive integers $r_1\le d_1$ and $r_2\le d_2$ such that $\gcd(r_1, d_1)=\gcd(r_2, d_2)=1$. In particular, since $d$ divides $d_1$, $d$ is coprime with $r_1$ and so there exists a nonnegative integer $j\le d-1$ such that $jr_1\equiv -r_2\pmod d$; since $d$ coprime with $r_2$, $j\ne 0$. Notice that
$$a_1^{\frac{j}{d}d_1}a_2^{\frac{d_2}{d}}=\theta^{\frac{(q-1)(jr_1+r_2)}{d}}=1,$$
since $jr_1+r_2$ is divisible by $d$. In particular, $(\frac{jd_1}{d}, \frac{d_2}{d}, 0\ldots, 0)\in \N^h$ satisfies Eq.~\eqref{product-one}. Clearly $u_1:=\frac{jd_1}{d}<d_1$ and $u_2:=\frac{d_2}{d}<d_2$, and then $\y=x_1^{u_1}x_2^{u_2}$ belongs to $M_{\A_1}$. It follows from definition that this monomial is divisible by a monomial $\x:=x_1^{v_1}x_2^{v_2}\in M_{\A_1}^*$ with $v_i\le u_i<d_i$; in particular, $a_1^{v_1}a_2^{v_2}=1$ and, since $v_i<d_i$, it follows that $v_1, v_2>0$. 

Is not hard to see that $R_{\A}$, viewed as a ring, is an {\it Integral Domain} and any element of $M_{\A_1}^*$ is {\it irreducible} over $R_{\A}$. Notice that
$$\y^{d}=(x_1^{d_1})^j\cdot x_2^{d_2},$$
hence $\x=x_1^{v_1}x_2^{v_2}$ is an irreducible dividing $\y^{d}$ and, since $v_1, v_2>0$, $\x$ does not divide $$(x_1^{d_1})^j\quad \text{or}\quad x_2^{d_2}.$$ However, $x_1^{d_1}$ and  $x_2^{d_2}$ are in $R_{\A}$ and then $R_{\A}$ cannot be an Unique Factorization Domain.

\textbf{(ii) $\rightarrow$ (iii)}: For $h=0$ or $1$, $M_{\A_1}^*=\emptyset$ or $\{x_1^{d_1}\}$, respectively, and in both cases $N(\A_1)=0$. Let $h>1$ and suppose that the numbers $d_i$ are pairwise coprime. We are going to find explicitly the set $M_{\A_1}^*$: suppose that $(b_1, \ldots, b_h)\in \N^{h}$, where $b_i\le d_i$, at least one $b_i$ is nonzero and
$$a_1^{b_1}\cdots a_h^{b_h}=1.$$
Set $D=d_1\cdots d_h$ and $D_j=\frac{D}{d_j}$, for $1\le j\le h$. Raising powers $D_j$ in the previous equality we obtain
$$a_j^{b_jD_j}=1,$$
hence $d_j$ divides $b_jD_j$. In particular, since the numbers $d_i$ are pairwise coprime, it follows that $d_j$ and $D_j$ are coprime and then we conclude that $d_j$ divides $b_j$. Since $0\le b_j\le d_j$, it follows that, for each $1\le j\le h$, either $b_j=0$ or $b_j=d_j$. This shows that $M_{\A_1}^*=\{x_1^{d_1}, \ldots, x_h^{d_h}\}$ and then $$N(\A_1)=|M_{\A_1}^*|-h=h-h=0$$

\textbf{(iii) $\rightarrow$ (iv)}: If $N(\A_1)=0$, we know that $M_{\A_1}^*=\{x_1^{d_1}, \ldots, x_h^{d_h}\}$. From Proposition \ref{fixed-ring} and Corollary \ref{AI} it follows that $R_{\A}$ is an $\F_q$-algebra generated by $n$ algebraically independent elements in $\mathcal A_n$. In particular, $R_{\A}$ is isomorphic to $\mathcal A_n$.

\textbf{(iv)$\rightarrow$ (i)}: This follows directly by definition.
\end{proof}

In other words, Theorem \ref{free} above says that $R_{\A}$ is free if and only if $M_{\A_1}^*$ has no mixed monomials. For instance, if $q=2$, for any $n\ge 1$ and $\A\in G^n$, $\A$ has no elements of $h$-type as coordinates. In particular, the algebra $R_{\A}$ is always isomorphic to $\mathcal A_n$. 

We have a sharp upper bound on the number of coordinates of $h-$type in an element $\A$ such that $R_{\A}$ is free:

\begin{corollary}
Suppose that $q>2$ and let $\omega(q-1)$ be the number of distinct prime divisors of $q-1$. The following hold:
\begin{enumerate}[(i)]
\item If $\A\in G^n$ is an element of type $(h, t)$ with $h>\omega(q-1)$, then $R_{\A}$ is not free. 
\item For any nonnegative integers $h\le \omega(q-1)$ and $t$ such that $h+t\le n$, there is an element $\A$ of type $(h, t)$ such that $R_{\A}$ is free.
\end{enumerate}
\end{corollary}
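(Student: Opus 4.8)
The plan is to reduce both parts directly to the characterization in Theorem \ref{free}: for $\A$ of type $(h,t)$ with canonical decomposition $\A=\A_1\A_2$, $H(\A)=H(\A_1)=\{a_1,\ldots,a_h\}$ and $d_i=\ord(a_i)$, the algebra $R_{\A}$ is free if and only if $h\le 1$, or $h>1$ and the $d_i$ are pairwise coprime. Throughout, recall that each $d_i$ divides $q-1$ and satisfies $d_i>1$, and that $q>2$ forces $q-1\ge 2$, so $\omega(q-1)\ge 1$.

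For part (i), assume $\A$ is of type $(h,t)$ with $h>\omega(q-1)$; in particular $h\ge 2$. Each $d_i>1$ has at least one prime divisor $\ell_i$, and since $d_i\mid q-1$ this $\ell_i$ lies among the $\omega(q-1)$ primes dividing $q-1$. The assignment $i\mapsto \ell_i$ is then a map from $\{1,\ldots,h\}$ into a set of size $\omega(q-1)<h$, so by the pigeonhole principle there exist $i\ne j$ with $\ell_i=\ell_j=:\ell$. Then $\ell\mid\gcd(d_i,d_j)$, so the $d_i$ are not pairwise coprime, and Theorem \ref{free} gives that $R_{\A}$ is not free.

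For part (ii), fix nonnegative integers $h\le\omega(q-1)$ and $t$ with $h+t\le n$, and construct $\A$ explicitly. If $h=0$, any element of type $(0,t)$ works, since $R_{\A}$ is free by Theorem \ref{free} (indeed it is a genuine polynomial ring by Theorem \ref{translation}). If $h\ge 1$, choose $h$ distinct primes $\ell_1,\ldots,\ell_h$ dividing $q-1$, which is possible because there are exactly $\omega(q-1)\ge h$ of them; since $\F_q^*$ is cyclic of order $q-1$, for each $i$ there is $a_i\in\F_q^*$ with $\ord(a_i)=\ell_i$, and $a_i\ne 0,1$ because $\ell_i>1$. Let $\A=(A_1,\ldots,A_n)$ with $A_i=\left(\begin{matrix}a_i&0\\0&1\end{matrix}\right)$ for $1\le i\le h$, $A_i=\left(\begin{matrix}1&1\\0&1\end{matrix}\right)$ for $h<i\le h+t$, and $A_i=I$ for $h+t<i\le n$. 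Then $\A$ is of type $(h,t)$ (the three slot counts $h$, $t$, $n-h-t$ are all nonnegative by $h+t\le n$, and $a_i\ne 1$ makes the first $h$ matrices of $h$-type), with $d_i=\ell_i$ pairwise coprime; hence $R_{\A}$ is free by Theorem \ref{free}. Together, (i) and (ii) show that $\omega(q-1)$ is the exact threshold.

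Once Theorem \ref{free} is available, the argument is essentially bookkeeping: the only genuine content is the pigeonhole step in (i) and the standard fact that a cyclic group of order $q-1$ realizes every divisor of $q-1$ as an element order, used in (ii). I therefore do not anticipate a real obstacle; the one point requiring care is checking that the $\A$ built in (ii) truly has type $(h,t)$ in the normalized sense used throughout — that is, that the chosen $a_i$ satisfy $a_i\neq 1$ and that reordering variables is not needed — which is immediate from $\ell_i>1$ and the way the coordinates are laid out.
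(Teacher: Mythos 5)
Your proposal is correct and follows essentially the same route as the paper: part (i) via the pigeonhole principle applied to prime factors of the $d_i$ among the $\omega(q-1)$ primes dividing $q-1$, and part (ii) by choosing $h$ distinct prime divisors of $q-1$ and realizing them as element orders in the cyclic group $\F_q^*$, both combined with Theorem \ref{free}. The only cosmetic difference is that the paper splits part (ii) into the cases $\omega(q-1)\le 1$ and $\omega(q-1)>1$, whereas you treat $h=0$ separately and $h\ge 1$ uniformly; both are valid.
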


\begin{proof}
\begin{enumerate}[(i)]
\item Let $\A$ be an element of type $(h, t)$ with $h>\omega(q-1)$ such that $H(\A)=\{a_1, \ldots, a_h\}$ and $d_i=\ord(a_i)>1$ for $1\le i\le h$. Since $h>\omega(q-1)$, from the {\it Pigeonhole Principle}, there exist two elements $d_i$ and $d_j$ that are divisible by some prime factor $r$ of $q-1$ and it follows from Theorem \ref{free} that $R_{\A}$ is not free. 
\item If $\omega(q-1)\le 1$, then $h\le 1$ and Theorem \ref{free} says that, for any element $\A$ of type $(h, t)$, the $\F_q$-algebra $R_{\A}$ is free. Suppose that $\omega(q-1)>1$, $2\le h\le \omega(q-1)$ and let $p_1, \ldots, p_h$ be distinct prime factors of $q-1$. For each $1\le i\le h$, let $\theta_i\in \F_q^*$ be an element such that $\ord(\theta_i)=p_i$. For each nonnegative integer $t$ with $h+t\le n$, consider $\A$ the element of type $(h, t)$ such that $H(\A)=\{\theta_1, \ldots, \theta_h\}$. Since the numbers $p_i$ are pairwise coprime, from Theorem \ref{free}, $R_{\A}$ is a free $\F_q$-algebra.
\end{enumerate}
\end{proof}

\section{Minimal product-one sequences in $\F_q^*$ and bounds for $N(\A_1)$}

In the previous section we give a characterization of the ring $R_{\A}$ as a finitely generated $\F_q$-algebra, finding explicitly a minimal generating set $S_{\A}$ for $R_{\A}$. How large can be the set $S_{\A}$? We have seen that $|S_{\A}|=n+N(\A_1)$ and actually $N(\A_1)=0$ when $\A$ is of type $(h, t)$ for $h=0, 1$ and some special cases of $h\ge 2$. Is then natural to ask what happens in the general case $h\ge 2$. In this section, we show that $N(\A_1)$ is related to the number of minimal solutions of Eq.~\eqref{product-one} and show how this can be translated to the study of minimal product-one sequences in $\F_q^*$. We start with some basic theory on product-one sequences.

\begin{definition}
Given a finite abelian group $H$ (written multiplicatively), a sequence of elements $(a_1, \ldots a_k)$ (not necessarily distinct) in $H$ is a product-one sequence if $a_1\cdots a_k=1$, where $1$ is the identity of $H$; the number $k$ is called the length of $(a_1, \ldots, a_k)$.  We say that the sequence $(a_1, \ldots, a_k)$ is a minimal product-one sequence if $a_1\cdots a_k=1$ and no subsequence of $(a_1, \ldots a_k)$ share the same property.
\end{definition}
Since we are working in abelian groups, we consider the sequences up to permutation of their elements. The so-called {\it Davenport constant} of $H$, denoted by $D(H)$, is the smallest positive integer $d$ such that any sequence in $H$ of $d$ elements contains a product-one subsequence. In other words, $D(H)$ is the maximal length of minimal product-one sequences in $H$. What are the sequences attaining this bound? In the case when $H$ is cyclic, things are well understood:

\begin{theorem}\label{zero-sum}
Suppose that $H=C_m$ is the cyclic group with $m$ elements. Then $D(C_m)=m$. Also, any minimal product-one sequence in $C_m$ of size $m$ is of the form $(g, \ldots, g)$ for some generator $g$ of $C_m$.
\end{theorem}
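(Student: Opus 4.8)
The plan is to establish the two assertions separately, phrasing everything multiplicatively in $C_m$ (one may identify $C_m$ with $\Z/m\Z$ if convenient). The equality $D(C_m)=m$ splits into an easy lower and upper bound. For the lower bound, if $g$ generates $C_m$ then the sequence $(g,\dots,g)$ of length $m$ satisfies $g^m=1$, whereas every proper nonempty subsequence is $(g,\dots,g)$ of some length $k$ with $1\le k\le m-1$ and $g^k\ne 1$; hence this is a minimal product-one sequence of length $m$, so $D(C_m)\ge m$ (and this already proves the ``if'' direction of the structural statement). For the upper bound, given any $(a_1,\dots,a_m)$, the $m+1$ partial products $\pi_0=1$, $\pi_j=a_1\cdots a_j$ ($1\le j\le m$) cannot be pairwise distinct in a group of order $m$, so $\pi_i=\pi_j$ for some $i<j$, and then $a_{i+1}\cdots a_j=1$ is a nonempty product-one subsequence; thus $D(C_m)\le m$.

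For the structural part, let $S$ be a minimal product-one sequence of length $m$ and fix an ordering $(a_1,\dots,a_m)$ of its entries. The first thing I would isolate is: for \emph{any} ordering of the entries of $S$, the $m$ partial products $\pi_0=1,\pi_1=a_1,\dots,\pi_{m-1}=a_1\cdots a_{m-1}$ are pairwise distinct — an equality $\pi_i=\pi_j$ with $i<j$ would make $a_{i+1}\cdots a_j=1$ a proper nonempty product-one subsequence, contradicting minimality. Hence these $m$ elements exhaust $C_m$, and $\pi_m=a_1\cdots a_m=1$. Note that so far minimality has been used only against \emph{contiguous} subsequences, which by itself is not enough to pin down $S$.

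The heart of the argument is to play two orderings against each other. Fix $i\in\{1,\dots,m-1\}$ and compare the ordering $(a_1,\dots,a_i,a_{i+1},\dots,a_m)$ with the one obtained by interchanging $a_i$ and $a_{i+1}$. Since $C_m$ is abelian, all partial products of the new ordering agree with $\pi_0,\dots,\pi_{m-1}$ except the $i$-th, which equals $\pi_{i-1}a_{i+1}$ instead of $\pi_i=\pi_{i-1}a_i$. Applying the observation of the previous paragraph to the new ordering, its first $m$ partial products are again $m$ distinct elements, i.e. $(C_m\setminus\{\pi_i\})\cup\{\pi_{i-1}a_{i+1}\}=C_m$; as $C_m\setminus\{\pi_i\}$ has $m-1$ elements, this forces $\pi_{i-1}a_{i+1}=\pi_i$, hence $a_{i+1}=a_i$. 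Since $i$ was arbitrary, $a_1=\cdots=a_m=:g$, so $S=(g,\dots,g)$, and now minimality of this sequence says precisely that $1,g,\dots,g^{m-1}$ are distinct, i.e. $\ord(g)=m$, i.e. $g$ generates $C_m$; together with the lower-bound computation this gives the stated equivalence. The main obstacle is exactly extracting information from the \emph{non-contiguous} subsequences of $S$: reordering the entries and re-running the contiguous (partial-product) argument on the reordered sequence is the device that accomplishes this, and getting the bookkeeping of which partial product changes under an adjacent transposition right is the only delicate point.
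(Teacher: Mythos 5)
Your proof is correct. Note that the paper itself offers no proof of this statement: it is quoted as a classical fact about the Davenport constant of cyclic groups (the inverse problem for $D(C_m)$), so there is no argument in the paper to compare against. Your lower bound and the pigeonhole argument on the partial products $\pi_0,\dots,\pi_m$ for the upper bound are the standard ones, and they correctly yield $D(C_m)=m$ under either of the two equivalent formulations the paper gives (for the ``maximal length of a minimal sequence'' formulation one just notes that a minimal product-one sequence of length $k>m$ would contain, among its first $m$ entries, a proper nonempty contiguous product-one block, contradicting minimality). The structural part is also sound: the key observation that for \emph{every} ordering of a length-$m$ minimal sequence the partial products $\pi_0,\dots,\pi_{m-1}$ must be pairwise distinct, hence exhaust $C_m$, combined with the adjacent transposition of $a_i$ and $a_{i+1}$ (which, by commutativity, changes only the $i$-th partial product from $\pi_{i-1}a_i$ to $\pi_{i-1}a_{i+1}$), forces $a_{i+1}=a_i$ for all $i$, and then minimality of $(g,\dots,g)$ is exactly the statement $\ord(g)=m$. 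This is a clean, self-contained, elementary proof of a result the paper leaves as a black box.
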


Recall that, for an element $\A$ of type $(h, 0)$ with $h\ge 1$ and $H(\A)=\{a_1, \cdots a_h\}$, the set $M_{\A}$ is defined as the set of monomials $\x=x_1^{b_1}\cdots x_h^{b_h}$ such that at least one $b_i$ is nonzero, $b_i\le \ord(a_i)$ and 
$$a_1^{b_1}\cdots a_h^{b_h}=1.$$
In particular, the monomial $\x\in M_{\A}$ can be associated to the following product-one sequence $a(\x)$ in the cyclic group $\F_q^*=C_{q-1}$: 
$$a(\x):=(\underbrace{a_1, \ldots, a_1}_{b_1\,\mathrm{times}}, \ldots, \underbrace{a_h, \ldots, a_h}_{b_h\,\mathrm{times}}).$$ 
Its length is $\sum_{j=1}^hb_j$.  We claim that, for each $\x \in M_{\A}^*$, its associated product-one sequence is minimal. In fact, if $a(\x)$ were not minimal, there would exist nonnegative integers $b_1', \cdots, b_h'$ such that $b_i'\le b_i$, at least one $b_i'$ is nonzero, at least one $b_j'$ is strictly smaller than the corresponding $b_j$ and
$$a_1^{b_1'}\cdots a_h^{b_h'}=1.$$
From definition, the monomial $\y=x_1^{b_1'}\cdots x_h^{b_h'}$ is in $M_{\A}$ and divides $\x$, a contradiction since $\x\in M_{\A}^*$. Based on this observation and Theorem \ref{zero-sum}, we can give a sharp upper bound for the numbers $N(\A_1)$:

\begin{theorem}\label{upper-bound}
Let $\A\in G^n$ be an element of type $(h, t)$ and $\A=\A_1\A_2$ its canonical decomposition, where $h\ge 2$. Write $H(\A)=H(\A_1)=\{a_1, \ldots, a_h\}$ and $d_i=\ord(a_i)>1$ for $1\le i\le h$. Also, let $\ell(\A)$ be the least common multiple of the numbers $d_1, \ldots, d_h$. The following hold:
\begin{enumerate}[a)]
\item $N(\A_1)\le \binom{\ell(\A)+h-1}{h-1}-h$ and, in particular, $|S_{\A}|\le \binom{\ell(\A)+h-1}{h-1}+n-h$.
\item $N(\A_1)=\binom{\ell(\A)+h-1}{h-1}-h$ if and only if $H(\A)=H(\A_1)=\{\theta, \theta, \ldots, \theta\}$, where $\theta$ is an element of order $\ell(\A)$ in $\F_q^*$.
\end{enumerate}
\end{theorem}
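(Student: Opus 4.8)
The plan is to reduce the statement to a counting problem about minimal product-one sequences in $\F_q^* = C_{q-1}$, via the correspondence $\x \mapsto a(\x)$ described above, and then invoke Theorem~\ref{zero-sum}. First I would set $\ell = \ell(\A)$ and observe that, since each $d_i$ divides $\ell$, every monomial $\x = x_1^{b_1}\cdots x_h^{b_h} \in M_{\A_1}$ has all exponents bounded by $b_i \le d_i \le \ell$; hence $M_{\A_1}$ (and a fortiori $M_{\A_1}^*$) is contained in the set of monomials in $x_1, \ldots, x_h$ all of whose exponents lie in $\{0, 1, \ldots, \ell\}$. However, the element $x_1^{\ell}x_2^{\cdots}$ is too crude; the right bound comes from noting that every $\x \in M_{\A_1}^*$ has an associated \emph{minimal} product-one sequence $a(\x)$ in $C_{q-1}$ of length $\sum_i b_i$, and that the $d_i$ all divide $q-1$, so $a(\x)$ is in fact a minimal product-one sequence in the subgroup of $\F_q^*$ of order $\ell$ (or rather, its length is at most the Davenport constant of $C_\ell$, which is $\ell$ by Theorem~\ref{zero-sum}). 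Consequently $\sum_{i=1}^h b_i \le \ell$ for every $\x \in M_{\A_1}^*$.

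For part a), this last inequality says $M_{\A_1}^*$ injects into the set of monomials $x_1^{b_1}\cdots x_h^{b_h}$ with $\sum b_i \le \ell$ and $\sum b_i \ge 1$; the number of such monomials is $\binom{\ell + h}{h} - 1$. But I can do better: the $h$ monomials $x_i^{d_i}$ lie in $M_{\A_1}^*$ and each has degree $d_i \le \ell$, while any \emph{mixed} monomial in $M_{\A_1}^*$ (one involving at least two distinct variables) has $\sum b_i \ge 2$ with each $b_i < d_i \le \ell$. A cleaner bookkeeping: $M_{\A_1}^*$ is contained in the set of monomials of total degree between $1$ and $\ell$ in $h$ variables, which has size $\binom{\ell+h}{h}-1$; subtracting appropriately and using $N(\A_1) = |M_{\A_1}^*| - h$ should be arranged to give exactly $N(\A_1) \le \binom{\ell+h-1}{h-1} - h$. (The binomial $\binom{\ell+h-1}{h-1}$ counts monomials of degree exactly $\ell$ in $h$ variables; the argument is that distinct elements of $M_{\A_1}^*$ remain distinct after multiplying each by a suitable power of some $x_i$ to bring the total degree up to exactly $\ell$, because no element of $M_{\A_1}^*$ divides another and all have degree $\le \ell$ — this injection into degree-$\ell$ monomials is the crux.) The bound on $|S_\A| = n + N(\A_1)$ is then immediate.

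For part b), the "if" direction is a direct computation: when $H(\A) = \{\theta, \ldots, \theta\}$ with $\ord(\theta) = \ell$, Eq.~\eqref{product-one} becomes $\theta^{b_1 + \cdots + b_h} = 1$, i.e. $\ell \mid \sum b_i$; combined with $0 < b_i \le \ell$ this forces $1 \le \sum b_i \le h\ell$, and the minimal (non-divisible) solutions are exactly those with $\sum b_i = \ell$ and each $b_i < \ell$, together with the $h$ "pure" monomials $x_i^\ell$. One checks that $M_{\A_1}^*$ then consists precisely of \emph{all} monomials of total degree $\ell$ in $x_1, \ldots, x_h$ (the pure ones $x_i^\ell$ included), so $|M_{\A_1}^*| = \binom{\ell+h-1}{h-1}$ and $N(\A_1) = \binom{\ell+h-1}{h-1} - h$, attaining the bound. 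For the "only if" direction I would argue contrapositively: if the $d_i$ are not all equal to $\ell$, or more precisely if the multiset $H(\A)$ is not $\{\theta,\ldots,\theta\}$ for a single order-$\ell$ element $\theta$, then the injection of part a) into degree-$\ell$ monomials fails to be surjective. Concretely, pick an index with $d_i < \ell$; then $x_i^\ell$ is not a monomial appearing via $M_{\A_1}$ at all (its reduction is $x_i^0$), so at least one degree-$\ell$ monomial is missed, forcing strict inequality. The remaining subcase — all $d_i = \ell$ but the $a_i$ are not all equal to a common generator $\theta$ — is handled by exhibiting a degree-$\ell$ monomial $x_i^{c}x_j^{\ell-c}$ that is \emph{not} a minimal product-one monomial (because $a_i^c a_j^{\ell - c} \ne 1$ for the relevant $c$, or because it is divisible by a smaller element of $M_{\A_1}$), again losing a monomial in the count.

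The main obstacle I anticipate is pinning down the exact injection in part a) so that the bound comes out as $\binom{\ell+h-1}{h-1} - h$ rather than the weaker $\binom{\ell+h}{h} - 1$: one must verify that multiplying each antichain element of $M_{\A_1}^*$ up to total degree exactly $\ell$ can be done injectively, which uses both that $M_{\A_1}^*$ is an antichain under divisibility (Lemma~\ref{monomial-minimal} / definition of $M^*$) and that every element has degree at most $\ell$ (the Davenport-constant input). The case analysis in the "only if" part of b) — separating "some $d_i < \ell$" from "all $d_i = \ell$ but not a common generator" — is the second delicate point, and Theorem~\ref{zero-sum}'s characterization of the extremal minimal product-one sequences in $C_\ell$ as exactly $(g, \ldots, g)$ for a generator $g$ is precisely what closes the second subcase.
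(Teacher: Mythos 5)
Your overall strategy coincides with the paper's: bound the total degree of each element of $M_{\A_1}^*$ by $\ell(\A)$ via the Davenport constant of $C_{\ell(\A)}$, inject $M_{\A_1}^*$ into the degree-$\ell(\A)$ monomials by padding, and analyse when the injection is a bijection. Part a) and the ``if'' direction of b) are essentially right, with one detail worth pinning down: padding ``by a suitable power of \emph{some} $x_i$'' is not automatically injective even on an antichain (e.g.\ $x_1x_2$ padded by $x_1$ and $x_1^2$ padded by $x_2$ collide at $x_1^2x_2$); the paper fixes the padding variable to be $x_h$ for every element, and injectivity then follows from the antichain property exactly as you indicate.

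The genuine gap is in the ``only if'' direction of b), in the subcase where all $d_i=\ell(\A)$ but the $a_i$ are not all equal. Your plan is to exhibit a degree-$\ell(\A)$ monomial $x_i^cx_j^{\ell(\A)-c}$ that fails to lie in $M_{\A_1}^*$ and conclude that a monomial is ``lost in the count.'' But the count compares $|M_{\A_1}^*|$ with $|\mathcal M_h(\ell(\A))|$ \emph{through the padding injection}: a degree-$\ell(\A)$ monomial that is not itself in $M_{\A_1}^*$ can perfectly well be the image of a lower-degree element of $M_{\A_1}^*$, so its absence from $M_{\A_1}^*$ does not break surjectivity. Concretely, take $h=2$, $\ell=4$, $a_1=\theta$, $a_2=\theta^3$ with $\ord(\theta)=4$: then $x_1x_2^{3}$ satisfies $a_1a_2^{3}\ne 1$ (your criterion), yet it is the image of $x_1x_2\in M_{\A_1}^*$ under the padding map, so it is not a lost monomial. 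What must actually be shown is that the specific monomials $x_ix_j^{\ell(\A)-1}$ \emph{do} lie in $M_{\A_1}^*$ when equality holds (whence $a_ia_j^{\ell(\A)-1}=1$ and $a_i=a_j$ directly, without even needing the extremal case of Theorem~\ref{zero-sum}). For $h>2$ this follows because the preimage of $x_ix_j^{\ell(\A)-1}$ under the padding map is forced to be that monomial itself; for $h=2$ the preimage is only forced to be $x_1x_2^{s}$ for some $s$, and the paper needs a separate combinatorial argument (the exponents $s(k)$ form a permutation of $\{1,\dots,\ell(\A)-1\}$ with $s(1)$ maximal, hence $s(1)=\ell(\A)-1$) to conclude. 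This forced-preimage/permutation step is the missing idea; the contrapositive route you sketch does not supply it.
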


\begin{proof}
\begin{enumerate}[a)]
\item Since each $d_i$ divides $q-1$, it follows that $\ell(\A)$ divides $q-1$. Let $C_{\ell(\A)}\subseteq \F_{q}^*$ be the cyclic group of order $\ell(\A)$. In particular, since $a_i^{\ell(\A)}=1$ for any $1\le i\le h$, each $a_i$ is in $C_{\ell(\A)}$. We have seen that any element $x_1^{b_1}\cdots x_h^{b_h}\in M_{\A_1}^*$ can be associated to a minimal product-one sequence in $\F_{q}^*$ having length $\sum_{j=1}^hb_j$. Actually, since each $a_i$ is in $C_{\ell(\A)}$, such a sequence is in $C_{\ell(\A)}$. From Theorem \ref{zero-sum}, it follows that its length $\sum_{j=1}^hb_j$ is at most $\ell(\A)$. In particular, any monomial $x_1^{b_1}\cdots x_h^{b_h}\in M_{\A_1}^*$ is such that $\sum_{j=1}^hb_j\le \ell(\A)$. If $\mathcal M_h(d)$ denotes the set of all monomials $x_1^{r_1}\cdots x_h^{r_h}$ such that $\sum_{i=1}^hr_i=d$, we define the following map:
$$
 \begin{array}{rccl}
  \Lambda_h : & M_{\A_1}^* &\longrightarrow& \mathcal M_h(\ell(\A))\\
  &x_1^{b_1}\cdots x_h^{b_h} &\longmapsto& x_1^{b_1}\cdot x_2^{b_2}\cdots x_h^{b_h+\ell(\A)-(b_1+\cdots +b_h)}.
\end{array}
$$
Clearly, $\Lambda_h$ is well defined. We claim that $\Lambda_h$ is one-to-one. In fact, if there are two distinct elements $\x_1= x_1^{b_1}\cdots x_h^{b_h}$ and $\x_1'=x_1^{b_1'}\cdots x_h^{b_h'}$ in $M_{\A_1}^*$ such that $\Lambda_h(\x_1)=\Lambda_h(\x_1')$, we have $b_i=b_i'$ for $1\le i\le h-1$ and then, since the elements are distinct, it follows that $b_h\ne b_h'$. For instance, suppose $b_h>b_h'$, hence $\x_1$ is divisible by $\x_1'$, a contradiction with the definition of $M_{\A_1}^*$. In particular, we have shown that $|M_{\A_1}^*|\le |\mathcal M_h(\ell(\A))|$. A simple calculation yields $|\mathcal M_h(\ell(\A))|=\binom{\ell(\A)+h-1}{h-1}$. Thus,
$$N(\A_1)=|M_{\A_1}|-h\le \binom{\ell(\A)+h-1}{h-1}-h.$$

\item Suppose that $N(\A_1)=\binom{\ell(\A)+h-1}{h-1}-h$. In particular, $$|M_{\A_1}^*|=N(\A_1)+h=\binom{\ell(\A)+h-1}{h-1}=|\mathcal M_h(\ell(\A))|,$$ 
and then the map $\Lambda_h$ defined above is an one-to-one correspondence. 
In particular, the elements $x_i^{\ell(\A)}$ are in the image of $M_{\A_1}^*$ by $\Lambda_h$ and this implies $x_i^{\ell(\A)}\in M_{\A_1}^*$ for any $1\le i <h$. From the definition of $M_{\A_1}^*$, it follows that $d_i=\ell(\A)$ for $1\le i<h$. Similarly, if we define the map $\Lambda_h'$ as 
$$
 \begin{array}{rccl}
  \Lambda_h' : & M_{\A_1}^* &\longrightarrow& \mathcal M_h(\ell(\A))\\
  &x_1^{b_1}\cdots x_h^{b_h} &\longmapsto& x_1^{b_1+\ell(\A)-(b_1+\cdots +b_h)}\cdot x_2^{b_2}\cdots x_h^{b_h},
\end{array}
$$
one can show that $\Lambda_h'$ must be an one-to-one correspondence and in the same way we obtain $d_i=\ell(\A)$ for any $1<i\le h$. 
Thus $d_i=\ell(\A)$ for any $1\le i\le h$, i.e., the multiplicative orders of the elements $a_i$ are the same. We split into cases:
\begin{enumerate}[]
\item {\bf Case 1.} $h=2$. Since $\Lambda_h$ is onto, it follows that $x_1^{k}x_2^{\ell(\A)-k}$ is in the image of $M_{\A_1}^*$ by $\Lambda_h$, for any $1\le k<\ell(\A)$. But the pre-image of such element is $x_1^kx_2^{s(k)} \in M_{\A_1}^*$ for some positive integer $1\le s(k)<\ell(\A)$. From the definition of $M_{\A_1}^*$, $s(k)\ne s(k')$ if $k\ne k'$. Also, $x_1x_2^{s(1)}$ cannot divide $x_1^{k}x_2^{s(k)}$ for any $k>1$, i.e., $s(1)>s(k)$ for any $1<k<\ell(\A)$. In other words, $\{s(1), \ldots, s(\ell(\A)-1)\}$ is a permutation of $\{1, \ldots, \ell(\A)-1\}$ and has $s(1)$ as its greatest element, hence $s(1)=\ell(\A)-1$. Therefore, $x_1x_2^{\ell(\A)-1}$ is in $M_{\A_1}^*$ and it follows from definition that $a_1\cdot a_2^{\ell(\A)-1}=1$, i.e., $a_1=a_2$. Thus $\theta=a_1=a_2$ is the desired element.

\item {\bf Case 2.} $h>2$. Again, since $\Lambda_h$ is onto, it follows that $x_ix_j^{\ell(\A)-1}$ is in $M_{\A_1}^*$ if $1\le i< j<h$. In the same way of the previous item, we conclude that $a_i=a_j$ for $1\le i<j<h$ and, in particular, $a_i=a_2$ for $1\le i<h$.
Recall that $\Lambda_h'$ is also onto and as before we can see that this implies $a_i=a_j$ if $1<i<j\le h$ and, in particular, $a_i=a_2$ for $1<i\le h$. Therefore $a_i=a_2$ for all $1\le i\le h$ and so $\theta=a_2$ is the desired element.
\end{enumerate}

Conversely, if $H(\A)=\{\theta, \ldots, \theta\}$ for some element $\theta$ of order $d=\ell(\A)$, we can easily verify that any element of $\mathcal M_{h}(\ell(\A))$ is in $M_{\A_1}^*$ and then $|\mathcal M_{h}(\ell(\A))|\le |M_{\A_1}^*|$. Since $\Lambda_h$ is one-to-one, $\Lambda_h$ must be an one-to-one correspondence (in fact, $\Lambda_h$ will be the identity map in this case). Thus $|M_{\A_1}^*|=|\mathcal M_{h}(\ell(\A))|$, i.e., $N(\A_1)=|M_{\A_1}^*|-h=\binom{\ell(\A)+h-1}{h-1}-h$.

\end{enumerate}
\end{proof}

Since the number $\ell(\A)$ defined above is always a divisor of $q-1$ and $|S_{\A}|=N(\A_1)+n$, Theorem \ref{upper-bound} implies the following:

\begin{corollary}
Let $\A\in G^n$ be an element of type $(h, t)$, where $h\ge 2$. Then $|S_{\A}|\le \binom{q+h-2}{h-1}+n-h$ with equality if and only if there exists a primitive element $\theta\in \F_{q}^*$ such that $H(\A)=\{\theta, \ldots, \theta\}$.
\end{corollary}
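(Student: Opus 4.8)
The plan is to specialize Theorem \ref{upper-bound} to the extremal value of $\ell(\A)$. Recall that $\ell(\A)$ is the least common multiple of the orders $d_1,\ldots,d_h$, and each $d_i$ divides $q-1$; hence $\ell(\A)$ is a divisor of $q-1$, so $\ell(\A)\le q-1$. Since the map $d\mapsto \binom{d+h-1}{h-1}$ is strictly increasing in $d$ for $h\ge 2$, Theorem \ref{upper-bound}(a) gives
$$N(\A_1)\le \binom{\ell(\A)+h-1}{h-1}-h\le \binom{(q-1)+h-1}{h-1}-h=\binom{q+h-2}{h-1}-h.$$
Adding $n$ to both sides and using $|S_{\A}|=N(\A_1)+n$ yields the claimed bound $|S_{\A}|\le \binom{q+h-2}{h-1}+n-h$.

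For the equality case, I would trace through when each of the two inequalities above is tight. The second inequality is an equality if and only if $\ell(\A)=q-1$, since $\binom{d+h-1}{h-1}$ is strictly increasing in $d$. Given that $\ell(\A)=q-1$, the first inequality is an equality, by Theorem \ref{upper-bound}(b), if and only if $H(\A)=H(\A_1)=\{\theta,\ldots,\theta\}$ for an element $\theta\in\F_q^*$ of order $\ell(\A)=q-1$, i.e., a primitive element. Conversely, if $H(\A)=\{\theta,\ldots,\theta\}$ with $\theta$ primitive, then $d_i=\ord(\theta)=q-1$ for all $i$, so $\ell(\A)=q-1$, and Theorem \ref{upper-bound}(b) gives $N(\A_1)=\binom{q+h-2}{h-1}-h$, hence $|S_{\A}|=\binom{q+h-2}{h-1}+n-h$. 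This establishes the equivalence.

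There is really no serious obstacle here: the corollary is a direct packaging of Theorem \ref{upper-bound} together with the elementary fact that $\ell(\A)\mid q-1$ and the monotonicity of binomial coefficients in the upper argument. The only point requiring a moment of care is the reduction of the equality condition: one must observe that tightness in $|S_{\A}|\le\binom{q+h-2}{h-1}+n-h$ forces \emph{both} $\ell(\A)=q-1$ (from strict monotonicity) and the structural condition of Theorem \ref{upper-bound}(b), and then recognize that ``order $q-1$ in $\F_q^*$'' is precisely the definition of a primitive element. I would also remark explicitly that the hypothesis $h\ge 2$ is what makes $\binom{d+h-1}{h-1}$ genuinely strictly increasing in $d$ (for $h=1$ it is constant), so that the monotonicity argument applies.
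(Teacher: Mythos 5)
Your proof is correct and follows exactly the route the paper intends: the corollary is stated there as an immediate consequence of Theorem \ref{upper-bound} together with the observations that $\ell(\A)$ divides $q-1$ and $|S_{\A}|=N(\A_1)+n$. Your explicit tracking of when each of the two inequalities is tight (strict monotonicity of $\binom{d+h-1}{h-1}$ in $d$ for $h\ge 2$, plus part (b) of the theorem) is precisely the argument the paper leaves implicit.
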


In the case $q=3$, notice that $-1\in \F_3$ is the only nonzero element with order greater than one. In particular, for $h\ge 2$ and $A$ an element of type $(h, t)$, $|S_{\A}|$ always attain the bound $\binom{q+h-2}{h-1}+n-h=n+\frac{h(h-1)}{2}$.

We have seen that the bounds for the number $N(\A_1)$ or even the criterion for when $R_{\A}$ is free depend only on the order of the elements in $H(\A)$. We finish this section with a simple example, showing that $N(\A_1)$ depends strongly on the elements of $H(\A)$, not only on their orders.

\begin{example}
Suppose that $q\equiv 1\pmod 8$ and let $\lambda$ be an element of order $8$ in $\F_q^*$. Let $\A$ and $\A'$ the elements of type $(2, 0)$ in $G^2$ such that $H(\A)=\{\lambda^3, \lambda^2\}$ and $H(\A')=\{\lambda^6, \lambda^7\}$. In other words, $\A\circ f(x, y)=f(\lambda^3\cdot x, \lambda^2\cdot  y)$ and $\A'\circ f(x, y)=f(\lambda^6\cdot x, \lambda^7\cdot x)$. Both sets $H(\A)$ and $H(\A')$ have an element of order $8$ and an element of order $4$. By a direct calculation we find $M_{\A_1}^*=\{x^8, x^2y, y^4\}$ and $M_{\A'_1}^*=\{x^4, xy^6, x^2y^4, x^3y^2, y^8\}$. Hence $N(\A_1)=1$ and $N(\A_1')=3$.
\end{example}

\section{Invariants through the action of Sylow subgroups of $G^n$}
In the previous sections we explore the structure of the fixed-point subring $R_{\A}$ arising from the $\F_q$-automorphism induced by an element $\A\in G^n$. In this section we consider a more restricted class of invariants. For a sugbroup $H\in G^n$, we define $R_{H}$ the set of elements in $\mathcal A_n$ that are fixed by any element $\A\in H$. In other words, $R_{H}=\{f\in \mathcal A_n\, |\, \A\circ f=f\, , \forall \A\in H\}$, is the {\it fixed-point} subring of $\mathcal A_n$ by $H$. We consider the ring of invariants $R_{H}$, for $H$ a Sylow subgroup of $G^n$.

Recall that $G^n$ has $[q(q-1)]^n$ elements and let $q-1=r_1^{\beta_1}\cdots r_s^{\beta_s}$, be the prime factorization of $q-1$, where $q$ is a power of a prime $p$ and $s=\omega(q-1)$. From definition, the Sylow subgroups of $G^n$ are those ones of order equal to the maximal power of a prime dividing $[q(q-1)]^n$; the Sylow $p$-subgroups of $G^n$ have order $q^{n}$ and, for each $1\le i\le s$, the Sylow $r_i$-subgroups of $G^n$ have order $r_i^{n\beta_i}$. It is well known that any two Sylow $r$-subgroups are conjugated and, by small modification of Theorem \ref{HT}, we see that any two conjugated groups $H, H'\in G^n$ have isomorphic fixed-point subrings. In particular, we just have to work with specific Sylow $r$-groups of $G^n$. We will naturally choose the simplest ones. 

We summarize here the ideas contained in this section. Essentially, we try to find a set of generators for $H$ such that their correspondents $\F_q$-automorphisms leave fixed all but one variable in $\{x_1, \ldots, x_n\}$. Using separation of variables (Lemma \ref{separation}) we characterize independently the rings of invariants for each automorphism. The ring $R_{H}$ will be the intersection of such rings; at this step, we follow as in the proof of Proposition \ref{fixed-ring}. For simplicity, we omit proofs that are completely analogous to the ones that we have already done.

\subsection{Homotheties and Sylow $r_i$-subgroups}
We start fixing some notation. For any nonzero element $a\in \F_q$, set $A(a)=\left(\begin{matrix}a&0\\0&1\end{matrix}\right)\in G$. For each prime $r_i$ dividing $q-1$, let $G(r_i)\le G$ be the set of matrices $A(a)$, where $a\in \F_q^*$ is such that $a^{r_i^{\beta_i}}=1$. Clearly $G(r_i)$ is a group with $r_i^{\beta_i}$ elements. Therefore, $$H(r_i):=\underbrace{G(r_i)\times \cdots \times G(r_i)}_{n\,\mathrm{times}}\le G^n$$ has order $r_i^{n\beta_i}$, i.e., $H(r_i)$ is a Sylow $r_i$-subgroup of $G^n$. If $\theta_i\in \F_q^{*}$ is an element of order $r_i^{\beta_i}$, we can verify that $H(r_i)$ is generated by $\{\A_j(\theta_i),\, 1\le j\le n\}$, where $\A_j(\theta_i)=(I, \ldots, A(\theta_i), \ldots, I)$ is the element of $G^n$ such that its $k$-th coordinate is the identity matrix $I$ for $k\ne j$ and the $j$-th coordinate of $\A_j(\theta_i)$ is the matrix $A(\theta_i)$. In particular, the $\F_q$-automorphism induced by $\A_j(\theta_i)$ fixes each variable $x_k$ for $k\ne j$ and maps $x_j$ to $\theta_ix_j$. Since $\{\A_j(\theta_i), 1\le j\le n\}$ generates $H(r_i)$, we have that $f\in R_{H(r_i)}$ if and only if
\begin{equation}\label{homo}f=\A_1(\theta_i)\circ f=\A_2(\theta_i)\circ f=\cdots =\A_n(\theta_i)\circ f.\end{equation}
In other words,
$$f(x_1, \ldots, x_n)=f(\theta_ix_1, x_2, \ldots, x_n)=f(x_1,\theta_ix_2, \ldots, x_n)=\cdots=f(x_1, x_2, \ldots, \theta_ix_n).$$

We obtain the following:

\begin{proposition}\label{homo2}
For a fixed $i$ such that $1\le i\le s=\omega(q-1)$, set $d(i)=r_i^{\beta_i}$. Then $$R_{H(r_i)}=\F_q[x_1^{d(i)}, \ldots, x_n^{d(i)}].$$ In particular, $R_{H(r_i)}$ is a free $\F_q$-algebra, isomorphic to $\mathcal A_n$.  
\end{proposition}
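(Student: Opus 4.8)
The plan is to exploit the fact that, in contrast with the single-element situation of Proposition~\ref{homothety} (where invariance under $\A$ only imposes a product-one condition on the exponents of each monomial), membership in $R_{H(r_i)}$ demands \emph{simultaneous} invariance under all $n$ generators $\A_1(\theta_i),\ldots,\A_n(\theta_i)$, and this is rigid enough to pin down every monomial separately. I would start from the unique monomial expansion $f=\sum_{\alpha\in B}c_\alpha\x^{\alpha}$ of a nonzero $f\in\mathcal A_n$, where $B\subseteq\N^n$ is finite and each $c_\alpha\in\F_q^*$, and record that, since $\A_j(\theta_i)$ induces the substitution $x_j\mapsto\theta_ix_j$ and fixes the remaining variables, one has $\A_j(\theta_i)\circ f=\sum_{\alpha\in B}\theta_i^{\alpha_j}c_\alpha\x^{\alpha}$ for each $j$.

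By the uniqueness of the monomial expansion, the chain of equalities in Eq.~\eqref{homo} holds if and only if $\theta_i^{\alpha_j}c_\alpha=c_\alpha$ for every $\alpha\in B$ and every $j\in\{1,\ldots,n\}$; since $c_\alpha\neq 0$, this is equivalent to $\theta_i^{\alpha_j}=1$ for all such $\alpha,j$, that is, to $d(i)=\ord(\theta_i)=r_i^{\beta_i}$ dividing each coordinate $\alpha_j$. Hence $f\in R_{H(r_i)}$ precisely when every monomial occurring in $f$ is a monomial in $x_1^{d(i)},\ldots,x_n^{d(i)}$, which gives $R_{H(r_i)}\subseteq\F_q[x_1^{d(i)},\ldots,x_n^{d(i)}]$. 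The reverse inclusion is immediate: each $x_k^{d(i)}$ is fixed by every $\A_j(\theta_i)$ because $\theta_i^{d(i)}=1$, and $R_{H(r_i)}$ is a ring, so it contains the $\F_q$-algebra generated by the $x_k^{d(i)}$. Alternatively, one could run the variable-by-variable argument sketched in the introduction to this section, applying Theorem~\ref{LR}(ii) to the substitution $x_j\mapsto\theta_ix_j$ over the coefficient ring $\F_q[x_k:k\neq j]$ and intersecting as in the proof of Proposition~\ref{fixed-ring}; the direct monomial computation above seems shorter.

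For the final assertion, note that $d(i)=r_i^{\beta_i}$ is a divisor of $q-1$, so Corollary~\ref{AI} (applied with $h=n$, $t=0$ and $d_k=d(i)$ for all $k$, in which case $L(h,t)$ and $V_{h+t}$ are empty) shows that $x_1^{d(i)},\ldots,x_n^{d(i)}$ are algebraically independent over $\F_q$; equivalently, the $\F_q$-algebra homomorphism $\mathcal A_n\to\mathcal A_n$ sending $x_k\mapsto x_k^{d(i)}$ is injective because it carries distinct monomials to distinct monomials, and its image is exactly $R_{H(r_i)}$. Thus $R_{H(r_i)}$ is generated by $n$ algebraically independent elements, hence free, and the displayed map exhibits the isomorphism with $\mathcal A_n$. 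I do not anticipate a genuine obstacle here: the only point requiring care is the bookkeeping that the conjunction of identities in Eq.~\eqref{homo}, together with $\theta_i$ having order exactly $d(i)$, forces $d(i)\mid\alpha_j$ for \emph{every} $j$ at once — and it is precisely this rigidity that makes $R_{H(r_i)}$ far simpler than a generic fixed-point subring $R_{\A}$.
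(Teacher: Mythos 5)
Your proof is correct, and it takes a more direct route than the paper's. The paper first writes $R_{H(r_i)}=\bigcap_{1\le j\le n}R_{\A_j(\theta_i)}$, identifies each $R_{\A_j(\theta_i)}$ as $\F_q[x_1,\ldots,x_j^{d(i)},\ldots,x_n]$ via a translated version of Proposition~\ref{homothety}, and then computes the intersection by the same separation-of-variables technique used in Proposition~\ref{fixed-ring}; you instead impose all $n$ invariance conditions simultaneously on the unique monomial expansion of $f$, so that $\theta_i^{\alpha_j}c_\alpha=c_\alpha$ with $c_\alpha\ne 0$ forces $d(i)\mid\alpha_j$ for every $j$ in one stroke. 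Both arguments ultimately rest on the uniqueness of the monomial expansion, but yours collapses the intersection step into a single computation and is self-contained, whereas the paper's version is more modular, reusing the already-established machinery (and this modularity is what lets the paper dispatch the translation case, Proposition~\ref{trans2}, by the same template, where a purely monomial argument would not apply). Your treatment of the freeness claim via Corollary~\ref{AI} with $h=n$, $t=0$, $d_k=d(i)$ matches the paper exactly, and the auxiliary injectivity remark is a valid alternative.
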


\begin{proof}
From Eq.~\eqref{homo} we can see that $$R_{H(r_i)}=\bigcap_{1\le j\le n}R_{\A_j(\theta_i)}.$$ 
Also, a ``translated'' version of Proposition \ref{homothety} for each $\A_j(\theta_i)$ yields $R_{\A_j(\theta_i)}=\F_q[x_1, \ldots, x_j^{d(i)}, \ldots, x_n]$. Following the proof of Proposition \ref{fixed-ring} we obtain
$$\bigcap_{1\le j\le n}R_{\A_j(\theta_i)}=\F_q[x_1^{d(i)}, \ldots, x_n^{d(i)}].$$
Therefore, $R_{H(r_i)}=\F_q[x_1^{d(i)}, \ldots, x_n^{d(i)}]$. Since $d(i)$ is a divisor of $q-1$, it follows from Corollary \ref{AI} that $R_{H(r_i)}$ is generated by $n$ algebraically independent elements of $\mathcal A_n$. In particular, $R_{H(r_i)}$ is a free $\F_q$-algebra, isomorphic to $\mathcal A_n$.  
\end{proof}

\subsection{Translations and Sylow $p$-subgroups}
For any element $a\in \F_q$, set $B(a)=\left(\begin{matrix}1&a\\0&1\end{matrix}\right)\in G$. Also, let $G(p)\le G$ be the set of matrices of the form $B(a)$ for some $a\in \F_q$. Clearly $G(p)$ is a group with $q$ elements. Hence $$H(p):=\underbrace{G(p)\times \cdots \times G(p)}_{n\,\mathrm{times}}\le G^n$$ has order $q^n$, i.e., $H(p)$ is a Sylow $p$-subgroup of $G^n$. Notice that $G(p)$ is generated by $\{\B_j(a), \, a\in \F_q, \, 1\le j\le n\}$, where $\B_j(a)=(I, \ldots, B(a), \ldots, I)$ is the element of $G^n$ such that its $i$-th coordinate is the identity matrix $I$ for $i\ne j$ and the $j$-th coordinate of $\B_j(a)$ is the matrix $B(a)$. We start looking at the case $n=1$. Notice that $B(a)\circ f=f(x+a)$. From Theorem 2.5 of \cite{LR17} we can deduce the following:

\begin{lemma}\label{translation2}
A polynomial $f(x)\in \F_q[x]$ satisfies $f(x)=f(x+b)$ for all $b\in \F_q$ if and only if $f(x)=g(x^q-x)$, for some $g(x)\in \F_q[x]$.
\end{lemma}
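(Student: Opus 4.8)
The plan is to reduce this to the single-variable translation result already recorded as Theorem~\ref{LR}(i). The key observation is that the condition "$f(x)=f(x+b)$ for \emph{all} $b\in\F_q$" is much stronger than the condition "$f(x)=f(x+b)$ for one fixed $b$", and in fact it suffices to impose it for two well-chosen values of $b$. First I would apply Theorem~\ref{LR}(i) with $b=1$: since $f(x)=f(x+1)$, we get $f(x)=g_1(x^p-x)$ for some $g_1\in\F_q[x]$. If $p=q$ we are already done, so assume $q=p^k$ with $k\ge 2$. Now pick any element $c\in\F_q\setminus\F_p$; the hypothesis also gives $f(x)=f(x+c)$, so Theorem~\ref{LR}(i) yields $f(x)=g_2(x^p-c^{p-1}x)$ for some $g_2\in\F_q[x]$.

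The heart of the argument is to combine these two representations. From $f(x)=g_1(x^p-x)$ one sees that $f$ is invariant under $x\mapsto x+b$ for every $b$ in the kernel of the additive map $x\mapsto x^p-x$, i.e.\ for every $b\in\F_p$; similarly $f(x)=g_2(x^p-c^{p-1}x)$ shows $f$ is invariant under $x\mapsto x+b$ for every $b$ in the kernel of $x\mapsto x^p-c^{p-1}x$, which is the $\F_p$-line $\F_p\cdot c$. But actually we already know $f$ is invariant under \emph{all} translations by hypothesis, so the cleaner route is: the set $W=\{b\in\overline{\F_q}: f(x+b)=f(x)\}$ is an additive subgroup of $\overline{\F_q}$ containing $\F_q$, hence $f(x+b)=f(x)$ as polynomials for all $b\in\F_q$, and we want to show $f$ factors through $x^q-x$. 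I would argue that $f'(x)=0$: differentiating $f(x+b)=f(x)$ is unhelpful directly, so instead note that $f(x)-f(0)$ vanishes at every point of $\F_q$ (taking $x=0$ and letting $b$ range over $\F_q$ in $f(b)=f(0)$)... but that only handles constants.

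The genuinely correct mechanism, which I would carry out in detail, is induction on $\deg f$ combined with Theorem~\ref{LR}(i). Write $f(x)=g_1(x^p-x)$ as above. I claim $g_1$ itself satisfies $g_1(y)=g_1(y+b')$ for all $b'\in\F_q$. Indeed, the map $x\mapsto x^p-x$ is a surjective $\F_p$-linear endomorphism of $\F_q$ with image all of $\F_q$ (since its kernel $\F_p$ has size $p$ and $q/p$ counts... wait, the image of $x\mapsto x^p-x$ on $\F_q$ is exactly the trace-zero hyperplane, not all of $\F_q$). So this needs care: $g_1$ is only forced to be invariant under translations by elements of $\mathrm{Tr}$-zero. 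To get the full conclusion I would iterate: having $f=g_1\circ(x^p-x)$, restrict attention to which translations of $g_1$ are forced, apply Theorem~\ref{LR}(i) again to $g_1$, and continue. Since $q=p^k$, after $k$ steps the composition $x^p-x$ iterated $k$ times, suitably normalized, equals (a scalar multiple of) $x^q-x$ up to lower-order $\F_q$-linear combinations of the $x^{p^i}$; tracking that the accumulated linearized polynomial is exactly $x^q-x$ is the bookkeeping core of the proof.

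The main obstacle I anticipate is precisely this bookkeeping: showing that the linearized polynomial obtained by composing the successive "$x^p - (\text{const})x$" maps coming from $k$ applications of Theorem~\ref{LR}(i) is the specific polynomial $x^q-x$ and not some other $\F_q$-linearized polynomial of degree $q$ with kernel $\F_q$. One clean way to sidestep the iteration entirely: observe that $x^q-x=\prod_{\gamma\in\F_q}(x-\gamma)$, that the hypothesis says $f(x)\in\F_q[x]$ is fixed by the full group of translations $\{x\mapsto x+b: b\in\F_q\}$, and that the invariant ring of $\F_q[x]$ under this group of order $q$ is generated by the orbit product $\prod_{\gamma\in\F_q}(x-\gamma)=x^q-x$ (a standard fact, e.g.\ via the theory of the "additive polynomial" $\prod_{b\in\F_q}(x-b)$, or by a degree count: $[\F_q(x):\F_q(x^q-x)]=q$ equals the group order, so the fixed field is exactly $\F_q(x^q-x)$, and then an integrality/normalization argument pins the fixed \emph{ring} down to $\F_q[x^q-x]$). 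I would present the proof in this second form, deducing it as the $n=1$, $b$-ranges-over-all-of-$\F_q$ specialization of the argument already used for $H(p)$-invariants, which is the role this lemma plays in the paper.
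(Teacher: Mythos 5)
The paper does not actually prove this lemma; it simply says it ``can be deduced'' from Theorem 2.5 of \cite{LR17} (the single-translation statement recorded here as Theorem \ref{LR}(i)), so the intended argument is precisely the iteration you sketch first. Your worry about that route is legitimate but resolvable: from $f=g_1(x^p-x)$, invariance under $x\mapsto x+c$ forces $g_1(y+(c^p-c))=g_1(y)$, so $g_1$ is invariant under translation by the trace-zero hyperplane $\wp(\F_q)$; iterating Theorem \ref{LR}(i) along an $\F_p$-basis of that space produces, after $k$ steps ($q=p^k$), a representation $f=g(M(x))$ with $M$ a monic composition of separable degree-$p$ additive polynomials, hence a monic separable polynomial of degree $q$ that vanishes on all of $\F_q$ by construction --- and the only such polynomial is $\prod_{b\in\F_q}(x-b)=x^q-x$. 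That kernel-counting observation is the whole ``bookkeeping.'' The argument you actually commit to is different and also correct: Artin's theorem gives $[\F_q(x):\F_q(x)^T]=q$ for the order-$q$ translation group $T$, the inclusion $\F_q(x^q-x)\subseteq\F_q(x)^T$ together with $[\F_q(x):\F_q(x^q-x)]\le q$ forces equality of fields, and since $\F_q[x]$ is integral over the integrally closed ring $\F_q[x^q-x]$ (via $y^q-y-(x^q-x)=0$), any invariant polynomial lies in $\F_q[x^q-x]$. This buys self-containedness at the price of invoking Galois-theoretic machinery the paper never uses; the iteration stays entirely inside Theorem \ref{LR}, which is why the author treats the lemma as a corollary. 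Two small omissions: you never record the (one-line) converse direction, $g((x+b)^q-(x+b))=g(x^q-x)$ since $b^q=b$; and if you want the most elementary proof of all, divide $f$ by the monic polynomial $x^q-x$ and use uniqueness of quotient and remainder under $x\mapsto x+b$ together with induction on $\deg f$ --- the remainder, being invariant and of degree less than $q$, must be constant because it takes the value $f(0)$ on all of $\F_q$.
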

 
We now classify the fixed-point subring in the case of translations:

\begin{proposition}\label{trans2}
The fixed-point subring $R_{H(p)}$ of $\mathcal A_n$ by $H(p)$ satisfies $$R_{H(p)}=\F_q[x_1^q-x_1, \ldots, x_n^q-x_n].$$ In particular, $R_{H(p)}$ is a free $\F_q$-algebra, isomorphic to $\mathcal A_n$.  
\end{proposition}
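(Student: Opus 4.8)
The plan is to mimic the proof of Proposition~\ref{homo2}, replacing the role of homotheties by translations and using Lemma~\ref{translation2} in place of the ``translated'' version of Proposition~\ref{homothety}. First I would observe that, since $H(p)$ is generated by the elements $\B_j(a)$ with $a\in\F_q$ and $1\le j\le n$, a polynomial $f\in\mathcal A_n$ lies in $R_{H(p)}$ if and only if it is fixed by each $\B_j(a)$; grouping the generators according to the coordinate $j$ they act on, this is the same as saying that for every $1\le j\le n$, $f$ is fixed by the subgroup $\B_j(\F_q):=\{\B_j(a):a\in\F_q\}$. Hence $R_{H(p)}=\bigcap_{1\le j\le n} R_{\B_j(\F_q)}$, where $R_{\B_j(\F_q)}$ denotes the subring of polynomials fixed by all translations $x_j\mapsto x_j+a$, $a\in\F_q$.

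Next I would identify each $R_{\B_j(\F_q)}$. Fix $j$ and apply Lemma~\ref{separation} with the partition $\{x_j\}$ versus the remaining variables: write $f=\sum_{\alpha\in B}\x^{\alpha}P_{\alpha}(x_j)$, where each $\x^{\alpha}$ is a monomial in the variables $x_k$, $k\ne j$, and each $P_{\alpha}\in\F_q[x_j]$ is nonzero. Since the substitution $x_j\mapsto x_j+a$ fixes every $\x^{\alpha}$ and acts only on the $P_{\alpha}$, uniqueness in Lemma~\ref{separation} gives that $f$ is fixed by all of $\B_j(\F_q)$ if and only if $P_{\alpha}(x_j+a)=P_{\alpha}(x_j)$ for all $a\in\F_q$ and all $\alpha\in B$. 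By Lemma~\ref{translation2}, this holds exactly when each $P_{\alpha}$ is a polynomial in $x_j^q-x_j$, so $R_{\B_j(\F_q)}=\F_q[x_1,\ldots,x_{j-1},x_j^q-x_j,x_{j+1},\ldots,x_n]$.

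Finally I would compute the intersection $\bigcap_{1\le j\le n}R_{\B_j(\F_q)}$, exactly as in the proof of Proposition~\ref{fixed-ring}. The inclusion $\F_q[x_1^q-x_1,\ldots,x_n^q-x_n]\subseteq R_{H(p)}$ is immediate since each $x_j^q-x_j$ is fixed by every generator. For the reverse inclusion, take $f\in R_{H(p)}$ and peel off one variable at a time: using the decomposition of $f$ relative to $\{x_1\}$ and the fact that $f\in R_{\B_1(\F_q)}$, each coefficient is a polynomial in $x_1^q-x_1$ with coefficients in $\F_q[x_2,\ldots,x_n]$; since $f$ is also fixed by the generators acting on $x_2,\ldots,x_n$ and these fix $x_1^q-x_1$, those coefficients lie in the fixed-point subring of $\F_q[x_2,\ldots,x_n]$ under the remaining translations, and one proceeds inductively on the number of variables. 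This yields $R_{H(p)}=\F_q[x_1^q-x_1,\ldots,x_n^q-x_n]$. Since $q$ is a power of $p$, the Jacobian of $x_1^q-x_1,\ldots,x_n^q-x_n$ is the determinant of $-I$, namely $(-1)^n\ne 0$, so by the weak Jacobian Criterion these $n$ elements are algebraically independent; hence $R_{H(p)}$ is a free $\F_q$-algebra isomorphic to $\mathcal A_n$. I expect no serious obstacle here: the only point requiring a little care is making the iterated ``peeling'' argument precise, but this is already done in Proposition~\ref{fixed-ring} and the paper explicitly licenses omitting such analogous details.
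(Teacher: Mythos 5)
Your proposal is correct and follows essentially the same route as the paper: decompose $f$ via Lemma \ref{separation} relative to each single variable, apply Lemma \ref{translation2} to the coefficients to get $R_{H(p)}\subseteq \F_q[x_1,\ldots,x_j^q-x_j,\ldots,x_n]$ for each $j$, compute the intersection by the iterated argument of Proposition \ref{fixed-ring}, and finish with the Jacobian $(-1)^n$ for algebraic independence. No gaps.
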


\begin{proof}
The case $n=1$ follows directly from Lemma \ref{translation2}. Suppose that $n>1$. From Lemma \ref{separation}, any nonzero polynomial $f\in \mathcal A_n$ can be written uniquely as $\sum_{\alpha\in B}\x\cdot P_{\alpha}(x_1)$, where $B$ is a finite set, each $\x^{\alpha}$ is a monomial in $\F_q[x_2, \ldots, x_n]$ and $P_{\alpha}(x_1)$ is in $\F_q[x_1]$. In particular, if $f\in R_{H(p)}$, then $\B_1(a)\circ f=f$ for any $a\in \F_q$ and then
$$\sum_{\alpha\in B}\x\cdot P_{\alpha}(x_1)=\sum_{\alpha\in B}\x\cdot P_{\alpha}(x_1+a), a\in \F_q.$$
In other words, each polynomial $P_{\alpha}$ satisfies $P_{\alpha}(x)=P_{\alpha}(x+a)$ for any $a\in \F_q$. From Lemma \ref{translation2}, $P_{\alpha}$ is in $L_1:=\F_q[x_1^p-x_1]$. This shows that $R_{H(p)}\subseteq L_1[x_2, \ldots, x_n]$. In the same way, from 
$\B_i(a)\circ f=f$ for any $a\in \F_q$, we obtain $R_{H(p)}\subseteq L_i[x_1, \ldots, x_{i-1}, x_{i+1}, \ldots, x_n]$ for any $1\le i\le n$, where $L_i:=\F_q[x_i^q-x_i]$. Therefore,
$$R_{H(p)}\subseteq \bigcap_{1\le i\le n} L_i[x_1, \ldots, x_{i-1}, x_{i+1}, \ldots, x_n].$$
Following the proof of Proposition \ref{fixed-ring} we obtain
$$\bigcap_{1\le i\le n} L_i[x_1, \ldots, x_{i-1}, x_{i+1}, \ldots, x_n]=\F_q[x_1^q-x_1, \ldots, x_n^q-x_n],$$
and then $$R_{H(p)}\subseteq \F_q[x_1^q-x_1, \ldots, x_n^q-x_n].$$ The reverse inclusion is trivial. Notice that, if we set $f_i=x_i^q-x_i$, the Jacobian $\det(J(f_1, \cdots, f_n))$ equals $(-1)^n\ne 0$. It follows from the (weak) Jacobian Criterion that the polynomials $f_i$ are algebraically independent. In particular, $R_{H(p)}$ is a free $\F_q$-algebra, isomorphic to $\mathcal A_n$.  

\end{proof}

Combining Propositions \ref{homo2} and \ref{trans2} we conclude the following:

\begin{theorem}
Let $r$ be any prime dividing $p(q-1)$ and $H$ a Sylow $r$-subgroup of $G^n$. The fixed-point subring $R_{H}$ of $\mathcal A_n$ by $H$ is a free $\F_q$-algebra, isomorphic to $\mathcal A_n$.
\end{theorem}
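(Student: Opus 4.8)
The plan is to reduce the statement immediately to the two cases already handled. The theorem concerns a Sylow $r$-subgroup $H$ of $G^n$ for an arbitrary prime $r\mid p(q-1)$. First I would invoke the fact, noted in the text just before the statement, that all Sylow $r$-subgroups are conjugate in $G^n$ and that conjugate subgroups have isomorphic fixed-point subrings (the obvious adaptation of Theorem \ref{HT}(a) from cyclic subgroups $\langle\A\rangle$ to arbitrary subgroups $H$: if $H'=\A_0 H\A_0^{-1}$ then $\A_0^{-1}$ restricts to an $\F_q$-isomorphism $R_H\to R_{H'}$, because $\A'\circ f=f$ for all $\A'\in H$ iff $\A\circ(\A_0\circ f)=\A_0\circ f$ for all $\A\in H'$). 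Hence it suffices to compute $R_H$ for one convenient representative in each prime-order case.

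Next I would split on whether $r=p$ or $r=r_i$ for some prime $r_i\mid q-1$. In the case $r\mid q-1$, write $r=r_i$; then $H(r_i)=G(r_i)^n$ is a Sylow $r_i$-subgroup, and Proposition \ref{homo2} gives $R_{H(r_i)}=\F_q[x_1^{d(i)},\ldots,x_n^{d(i)}]$ with $d(i)=r_i^{\beta_i}$, which is free and isomorphic to $\mathcal A_n$. In the case $r=p$, the subgroup $H(p)=G(p)^n$ is a Sylow $p$-subgroup, and Proposition \ref{trans2} gives $R_{H(p)}=\F_q[x_1^q-x_1,\ldots,x_n^q-x_n]$, again free and isomorphic to $\mathcal A_n$. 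Composing with the conjugation isomorphism from the first step, for an arbitrary Sylow $r$-subgroup $H$ we get $R_H\cong R_{H(p)}$ or $R_H\cong R_{H(r_i)}$, hence $R_H$ is a free $\F_q$-algebra isomorphic to $\mathcal A_n$ in every case.

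There is essentially no hard step here, since Propositions \ref{homo2} and \ref{trans2} do the real work; the only point requiring a word of care is the passage from a specific Sylow subgroup to a general one via conjugacy. I would state explicitly the subgroup version of Theorem \ref{HT}(a): for any $\A_0\in G^n$ and any subgroup $H\le G^n$, the automorphism induced by $\A_0^{-1}$ maps $R_{\A_0 H\A_0^{-1}}$ isomorphically onto $R_H$, and if one side is free with algebraically independent generators $f_1,\ldots,f_m$ then so is the other, with generators $\A_0\circ f_i$ (algebraic independence is preserved by the $\F_q$-automorphism $\A_0$, since a nontrivial relation $P(\A_0\circ f_1,\ldots,\A_0\circ f_m)=0$ would pull back to $P(f_1,\ldots,f_m)=0$ under $\A_0^{-1}$). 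With this lemma in hand the proof is a two-line case distinction.
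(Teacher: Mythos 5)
Your proposal is correct and follows essentially the same route as the paper: the paper reduces to the specific Sylow subgroups $H(r_i)$ and $H(p)$ via the conjugacy remark (the subgroup version of Theorem \ref{HT}(a)) stated at the start of Section 5, and then simply combines Propositions \ref{homo2} and \ref{trans2}. Your explicit treatment of why conjugation preserves freeness and algebraic independence is a welcome bit of added care, but it is not a different argument.
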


\section{Elements of type $(2, 0)$ in $G^2$ and their fixed elements}
Let $\mathcal H$ be the subgroup of $G^2$ comprising the elements of type $(2, 0)$ in $G^2$. Throughout this section, $\A$ is always an element of $\mathcal H$, $\A=(A(a), A(b))$ for some $a, b\in \F_q^*$ and we write $\mathcal A_2=\F_q[x, y]$. We have seen that $R_{\A}$ is finitely generated and a minimal set of generators can be given by Eq.~\eqref{product-one}; the generators are of the form $x^iy^j$, where $i, j\in \N$. We have shown, with a single example, that the size of this generating set depend strongly on $a$ and $b$, not only on their orders. Here we give an alternative characterization of the invariant polynomials. 

For a monomial term $e\cdot x^iy^j$ with $e\in \F_{q}^*$, we say that $i+j\in \N$ is the degree of $e\cdot x^iy^j$. The elements of $\F_q$ are of degree zero. It turns out that $\F_q[x, y]$ is a graded ring with the following grading:
\begin{equation}\label{grading}\F_q[x, y]=\bigoplus_{n\ge 0} F_n,\end{equation}
where $F_n$ is the subset of $\F_q[x, y]$ composed by all polynomials that are sums of monomial terms of degree $n$. In particular, $F_0=\F_q$. In general, each $F_i$ is an abelian group with respect to the sum. Usually, we say that $g_n\in F_n\setminus\{0\}$ is a {\it form} of degree $n$ or an {\it homogeneous} polynomial of degree $n$. In other words, Eq.~\eqref{grading} says that any nonnzero polynomial $F\in \F_q[x, y]$ can be written uniquely as a sum $g_0+g_1+\cdots+g_n$, where each $g_i$ is either zero or a form of degree $i$ and $g_n\ne 0$. The elements of type $(2, 0)$ have a property that does not hold in the whole group $G^2$: the composition $\A\circ \x^{\alpha}$ equals $\x^{\alpha}$ times a constant. In particular, $\mathcal H$ acts on each set $F_n$ via the automorphisms $\A$. From uniqueness, we can see that $F\in \F_q[x, y]$ is fixed by $\A\in \mathcal H$ if and only if each of its homogeneous components are fixed. In particular, if $F_n(\A)$ denotes the subset of $F_n$ comprising the elements fixed by $\A$, then $R_{\A}$ is also a graded ring with the following grading:
$$R_{\A}=\bigoplus_{n\ge 0} F_n(\A).$$

Clearly $F_0(\A)=F_0=\F_q$. It is then sufficient to characterize each set $F_n(\A)$. We recall a standard property of homogeneous polynomials in two variables: for a given form $g\in \F_q[x, y]$ of degree $n$, there exists a polynomial $f\in \F_q[t]$ of degree at most $n$ such that 
\begin{equation}\label{dehomo}g=y^n\cdot f\left(\frac{x}{y}\right).\end{equation}

Also, $g$ is irreducible if and only if the univariate polynomial $f$ is irreducible and of degree $n$. Conversely, for a polynomial $f(t)\in \F_q[t]$ of degree at most $n$, we can associate to it a form $g$ of degree $n$ given by Eq.~\eqref{dehomo}. This is an one-to-one correspondence between $F_n$ and the set $\mathcal C_n$ of polynomials of degree at most $n$ in $\F_q[t]$; this correspondence induces an one-to-one correspondence between the set of irreducible polynomials in $F_n$ and the set of irreducible univariate polynomials of degree $n$ in $\F_q[t]$. As follows, we show how these correspondences behave through the action of $\mathcal H$:

\begin{lemma}\label{h:}
Let $g\in F_n\setminus \{0\}$ such that $g=y^nf\left(\frac{x}{y}\right)$, where $f\in \F_q[t]$. Also, let $\A\in \mathcal H$ and $\A=(A(a), A(b))$. Set $c=ab^{-1}$. Then $g\in F_n(\A)$ if and only if $$b^{-n}f(t)=f(ct).$$ In particular, for $n\ge 2$, $g$ is irreducible and $g\in F_n(\A)$ if and only if $f(t)\in \F_q[t]$ is irreducible of degree $n$, $b^n=1$ and $f(t)=f(ct)$.  
\end{lemma}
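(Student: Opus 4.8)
The plan is to transport the identity $\A\circ g=g$ through the dehomogenization correspondence of Eq.~\eqref{dehomo} and to read off a functional equation for the univariate polynomial $f$. First I would compute $\A\circ g$ directly. Since $\A=(A(a),A(b))$ acts on $\F_q[x,y]$ by the substitutions $x\mapsto ax$ and $y\mapsto by$, we get
$$\A\circ g=g(ax,by)=(by)^nf\!\left(\frac{ax}{by}\right)=b^ny^nf\!\left(c\cdot\frac{x}{y}\right),$$
where $c=ab^{-1}$. Now $b^nf(ct)$ and $f(t)$ both lie in the set $\mathcal C_n$ of polynomials of degree at most $n$, and the assignment $h(t)\mapsto y^nh(x/y)$ is the one-to-one correspondence between $\mathcal C_n$ and $F_n$ recalled above; it sends these two polynomials to $\A\circ g$ and $g$ respectively. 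Hence $\A\circ g=g$ is equivalent to $b^nf(ct)=f(t)$, i.e. to $b^{-n}f(t)=f(ct)$, which is the first assertion.

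For the ``in particular'' clause, fix $n\ge 2$. By the correspondence recalled before the lemma, $g$ is irreducible exactly when $f$ is irreducible of degree $n$, so it remains to check that, for such $f$, the condition $b^{-n}f(t)=f(ct)$ is equivalent to the conjunction ``$b^n=1$ and $f(t)=f(ct)$''. The implication $(\Leftarrow)$ is immediate: $b^n=1$ forces $b^{-n}=1$, so $b^{-n}f(t)=f(t)=f(ct)$. For $(\Rightarrow)$, evaluate $b^{-n}f(t)=f(ct)$ at $t=0$ to obtain $b^{-n}f(0)=f(0)$; since $f$ is irreducible of degree $n\ge 2$ it is not divisible by $t$, hence $f(0)\ne 0$ and therefore $b^{-n}=1$, i.e. $b^n=1$. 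Substituting this back into the functional equation gives $f(t)=f(ct)$, as required.

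The computation is essentially routine, and the bijectivity statement attached to Eq.~\eqref{dehomo} does all the bookkeeping. The one place that needs a small idea is the $(\Rightarrow)$ direction of the second part, where the constraint $b^n=1$ has to be extracted: the key is that an irreducible polynomial of degree at least $2$ has nonzero constant term, so the functional equation, read at $t=0$, collapses to $b^{-n}=1$. (Comparing top-degree coefficients in the same identity would further give $c^n=1$, but this is not needed for the statement.)
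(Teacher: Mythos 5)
Your proof is correct and follows essentially the same route as the paper: compute $\A\circ g=b^ny^nf\left(c\cdot\frac{x}{y}\right)$, transfer the identity to the univariate polynomial via the dehomogenization correspondence, and extract $b^n=1$ in the irreducible case from the nonvanishing constant term at $t=0$. The only cosmetic difference is that you invoke injectivity of the map $h(t)\mapsto y^nh(x/y)$ on $\mathcal C_n$ where the paper evaluates at $z=x/y$ over $\overline{\F}_q$; both are valid ways to pass from the identity of forms to the polynomial identity $b^{-n}f(t)=f(ct)$.
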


\begin{proof}
An easy calculation yields $\A\circ g=g(ax, by)=b^ny^nf\left(c\cdot \frac{x}{y}\right)$. In particular, $\A\circ g=g$ implies $$b^ny^nf\left(\frac{cx}{y}\right)=y^nf\left(\frac{x}{y}\right).$$
If we set $z=\frac{x}{y}$ for $x, y$ in the algebraic closure $\overline{\F}_q$ of $\F_q$, $y\ne 0$, from the previous equality we obtain $b^nf(cz)=f(z)$ for any $z\in \overline{\F}_q$. In particular we have the polynomial identity $b^{-n}f(t)=f(ct)$. The converse is trivially true. Note that $g\in F_n$ is irreducible if and only if $f(t)\in \F_q[t]$ is irreducible of degree $n$. In particular, for $n\ge 2$, $f(t)$ has nonzero constant term $a_0$ and the equality $b^{-n}f(t)=f(ct)$ yields $a_0b^n=a_0$, hence the equality $b^{-n}f(t)=f(ct)$ is equivalent to $b^n=1$ and $f(t)=f(ct)$.
\end{proof}

If $f$ is of degree $d\le n$, a comparison on the leading coefficients in the equality $b^{-n}f(t)=f(ct)$ yields $b^{-n}=c^{d}$. Conversely, if $b^{-n}=c^{d}$ for some $d\le n$, then $x^{d}y^{n-d}$ is an element of $F_n(\A)$.  Set $k=\ord(b)$, the multiplicative order of $b$. Let $S(a, b)$ be the set of nonnegative integers $n\le k$ such that there exists a nonnegative integer $d\le n$ with $b^{-n}=c^{d}$. In particular, from Lemma \ref{h:}, for each $n\in \N$, we have $F_n(\A)\ne \{0\}$ if and only if $n\equiv r\pmod k$ for some $r\in S(a, b)$ and, in this case, $F_n(\A)$ contains only elements of the form $y^nf\left(\frac{x}{y}\right)$, where either $f=0$ or $\deg(f)\le n$. If $[n]$ denotes the least nonnegative integer $r$ such that $n\equiv r\pmod k$, we may rewrite 
$$R_{\A}=\bigoplus_{[n]\in S(a, b)\atop n\in \N} F_n(\A),$$
where $F_n(\A)$ comprises the polynomials of the form $y^nf\left(\frac{x}{y}\right)$ for some $f(t)\in \F_q[t]$ of degree $d\le n$, such that either $f=0$ or $c^d=b^{-n}$ and $f(ct)=b^{-n}f(t)$.

Let $n$ be a positive integer such that $[n]\in S(a, b)$ and let $d_0\le n$ be the least nonnegative integer such that $c^{d_0}=b^{-n}$ . For a nonnegative integer $d$ such that $d\le n$, we have $c^d=b^{-n}$ if and only if $d\equiv d_0\pmod \ell$, where $\ell=\ord(c)$. In particular, $d=d_0+s\cdot \ell$ for some $0\le s\le s(n)$, where $s(n)=\lfloor\frac{n-d_0}{\ell}\rfloor$.

If $f(t)=a_dt^d+\cdots+a_it+a_0$ is a polynomial of degree $d=d_0+s\cdot \ell$, the equality $f(ct)=b^{-n}f(t)$ yields $a_ib^{-n}=a_ic^i$, i.e., $$a_i=c^{d_0-i}a_i.$$ The last equality is equivalent to $a_i=0$ for $c^{d_0-i}\ne 1$, i.e., $i\not\equiv d_0\pmod \ell$. This shows that $f(ct)=b^{-n}f(t)$ if and only if there exists a polynomial $h(t)\in \F_q[t]$ such that $f(t)=t^{d_0}h(t^{\ell})$. The degree of $h(t)$ equals $\frac{d-d_0}{\ell}=s$ and, in particular, there are exactly $(q-1)q^{s}$ choices for $h(t)$. 

Combining all those observations, we conclude the following:
\begin{theorem}
Let $n$ be a positive integer such that $[n]\in S(a, b)$ and $b^{-n}=c^{d_0}$. The set $F_n(\A)$ comprises the polynomials of the form $y^{n-d_0}x^{d_0}h\left(\frac{x^{\ell}}{y^{\ell}}\right)$, where $h=0$ or $h$ is an univariate polynomial of degree at most $s(n)$. In particular, the size of $F_n(\A)$ equals
$$1+(q-1)\sum_{0\le s\le s(n)}q^{s}=q^{s(n)+1}.$$
\end{theorem}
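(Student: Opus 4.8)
The plan is to reduce everything, via Lemma \ref{h:} and the degree-$n$ dehomogenization $g=y^{n}f(x/y)$ that identifies $F_{n}$ with the set $\mathcal C_{n}$ of polynomials of degree at most $n$ in $\F_{q}[t]$, to an explicit description and count of the univariate $f(t)\in\F_{q}[t]$ with $\deg f\le n$ satisfying the functional equation $f(ct)=b^{-n}f(t)$. First I would dispose of the trivial form $g=0$ (equivalently $f=0$, equivalently $h=0$), which always lies in $F_{n}(\A)$; for nonzero forms, Lemma \ref{h:} says precisely that $g=y^{n}f(x/y)\in F_{n}(\A)$ if and only if $b^{-n}f(t)=f(ct)$.

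Next I would read off the structure of the solution set coefficient by coefficient. Writing $f(t)=\sum_{i=0}^{d}a_{i}t^{i}$ with $a_{d}\ne 0$, comparison of leading coefficients in $f(ct)=b^{-n}f(t)$ forces $c^{d}=b^{-n}=c^{d_{0}}$, hence $d\equiv d_{0}\pmod{\ell}$ with $\ell=\ord(c)$; combined with $d\le n$ this gives $d=d_{0}+s\ell$ for a unique $s$ with $0\le s\le s(n)$. Comparing the coefficient of $t^{i}$ for each $i$ gives $a_{i}c^{i}=c^{d_{0}}a_{i}$, i.e. $a_{i}(c^{d_{0}-i}-1)=0$, so $a_{i}=0$ whenever $i\not\equiv d_{0}\pmod{\ell}$. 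Thus $f(t)=t^{d_{0}}h(t^{\ell})$ for some $h\in\F_{q}[t]$ with $\deg h=(d-d_{0})/\ell=s\le s(n)$; conversely every such $f$ (with the convention $h=0$ giving $f=0$) satisfies the equation, and it has degree at most $n$ since $d_{0}+\ell\deg h\le d_{0}+\ell s(n)\le n$ by the definition of $s(n)$. Substituting back, $g=y^{n}f(x/y)=y^{n}(x/y)^{d_{0}}h((x/y)^{\ell})=y^{\,n-d_{0}}x^{d_{0}}h(x^{\ell}/y^{\ell})$, which is exactly the claimed shape, and one checks this is genuinely a form of degree $n$ because each monomial $x^{d_{0}+\ell j}y^{\,n-d_{0}-\ell j}$ has nonnegative exponents for $0\le j\le s(n)$.

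For the cardinality, the correspondence $g\mapsto f\mapsto h$ sets up a bijection between $F_{n}(\A)$ and the set of $h\in\F_{q}[t]$ with $\deg h\le s(n)$ (the zero polynomial included), of which there are $q^{s(n)+1}$. Equivalently, isolating $h=0$ and grouping the remaining $h$ by exact degree $s$ for $0\le s\le s(n)$ — each such degree allowing $q-1$ choices of nonzero leading coefficient and $q$ choices for each of the $s$ lower coefficients — yields $1+(q-1)\sum_{0\le s\le s(n)}q^{s}$, and the finite geometric sum collapses this to $q^{s(n)+1}$.

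Since nearly every ingredient is already assembled in the discussion preceding the statement, I do not expect a genuine obstacle. The only delicate bookkeeping is verifying that the constraint $\deg h\le s(n)$ is exactly equivalent to $\deg f\le n$, so that the map into $\mathcal C_{n}$ is onto the solution set and the bijection with $F_{n}(\A)$ is exact; and keeping track of the zero polynomial (the summand $1$) consistently on both the algebraic and the counting sides, including the degenerate cases $d_{0}=n$ (where $s(n)=0$) and $c=1$ (where $\ell=1$, $d_{0}=0$, $s(n)=n$).
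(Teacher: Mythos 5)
Your proposal is correct and follows essentially the same route as the paper: Lemma \ref{h:} reduces membership in $F_n(\A)$ to the functional equation $f(ct)=b^{-n}f(t)$, the leading-coefficient comparison pins $d\equiv d_0\pmod{\ell}$, the coefficientwise comparison forces $f(t)=t^{d_0}h(t^{\ell})$, and the count is the same geometric sum. The extra bookkeeping you flag (the zero polynomial, and the equivalence $\deg h\le s(n)\Leftrightarrow \deg f\le n$) is handled implicitly in the paper's discussion and is correctly resolved in your write-up.
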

Let us see what happens with the irreducible homogeneous: recall that, for $n\ge 2$, $g=y^nf\left(\frac{x}{y}\right)$ is irreducible and fixed by $\A$ if and only if $f$ is irreducible of degree $n$, $b^n=1$ and $f(t)=f(ct)$. If we write $f(x)=a_nx^n+\cdots+a_1x+a_0$, equality $f(t)=f(ct)$ yields $a_i=c^ia_i$. The last equality is equivalent to $a_i=0$ for $i\not\equiv 0\pmod \ell$, i.e., $f(t)=h(t^{\ell})$ for some polynomial $h\in \F_q[t]$ and then $n$ must be divisible by $\ell$. In particular, $c^n=1$ and, since $b^n=1$, it follows that $a^n=1$ and then $n$ must be divisible by $\mathrm{lcm}(\ord(a), \ord(b))=\ord(\A)$. Writing $D=\ord(\A)$ and $n=Dm$, we see that  $g=y^nf\left(\frac{x}{y}\right)$ is irreducible and fixed by $\A$ if and only if $f(t)=h(t^{\ell})$ is irreducible, where $h(t)$ has degree $\frac{Dm}{\ell}$. 

According to Theorem 3 of \cite{cohen}, the number of {\bf monic} irreducible polynomials of the form $h(t^{\ell})$, where $h(t)$ has degree $\frac{Dm}{\ell}$, equals
$$\frac{\Phi(\ell)}{Dm}\sum_{d|\frac{Dm}{\ell}\atop \gcd(d, \ell)=1}\mu(d)(q^{\frac{Dm}{d\ell}}-1),$$
where $\Phi$ is the {\it Euler Phi} function and $\mu$ is the {\it Mobius} function. For $n\ge 2$, let $N(\A, n)$ be the number of irreducible homogeneous polynomials of degree $n$ that are invariant by $\A\in \mathcal H$. In particular, $N(\A, n)=0$ if $n$ is not divisible by $D=\ord(\A)$ and, for $n=Dm$,
$$N(\A, Dm)=(q-1)\cdot \frac{\Phi(\ell)}{Dm}\sum_{d|\frac{Dm}{\ell}\atop \gcd(d, \ell)=1}\mu(d)(q^{\frac{Dm}{d\ell}}-1).$$

After simple calculations we obtain $N(\A, Dm)=\frac{(q-1)\Phi(\ell)}{Dm}q^{\frac{Dm}{\ell}}+R(q, m)$, where $$|R(q, m)|\le q^{\frac{Dm}{2\ell}+1}.$$

Also, for $n=Dm$, it follows from definition that $s(n)=\frac{n}{\ell}=\frac{Dm}{\ell}$, hence $|F_n(\A)|=q^{\frac{Dm}{\ell}+1}$ and
$$\frac{N(\A, Dm)}{|F_n(\A)|}=\frac{q-1}{q}\cdot \frac{\Phi(\ell)}{Dm}+d(q, m),$$
where $|d(q, m)|\le q^{-\frac{Dm}{2\ell}}$. In particular, for $q$ large, the density of the irreducible polynomials in $F_{Dm}(\A)$ is close to $\frac{\Phi(\ell)}{Dm}$.

Recall that, for $B=
\left(\begin{matrix}
a_1&a_2\\
a_3&a_4
\end{matrix}\right)
$ in $\GL_2(\F_q)$ and a polynomial $f(t)\in \F_q[t]$ of degree $n$, $B\diamond f=(a_3t+a_4)^nf\left(\frac{a_1t+a_2}{a_3t+a_4}\right)$. In particular, for $A(c):=\left(\begin{matrix}
c&0\\
0&1
\end{matrix}\right)$, we have $A(c)\diamond f=f(ct)$ and then, the irreducible elements of $F_{Dm}(\A)$ are of the form $y^{Dm}f\left(\frac{x}{y}\right)$, where $f$ is an irreducible polynomial of degree $Dm$ in $\F_q[t]$ such that $A(c)\diamond f=f$. 

According to Theorem 3 of \cite{Gar11}, the irreducible polynomials $f$ of degree $Dm$ such that $A(c)\diamond f=f$ are exactly the irreducible factors of degree $Dm$ of the polynomials $ct^{q^r-1}-1, r>0$. In particular, the irreducible elements of $F_{Dm}(\A)$ are exactly the irreducible (homogeneous) factors of degree $Dm$ of
$$ax^{q^r-1}-by^{q^r-1}, r>0.$$ 

\subsection{The compositions $A\diamond f$ and invariant homogeneous of $\F_q[x, y]$}
There is a more general action of $\GL_2(\F_q)$ on the polynomial ring $\F_q[x, y]$: given $A=\left(\begin{matrix}
a_1&a_2\\
a_3&a_4
\end{matrix}\right)\in\GL_2(\F_q)$ and $f\in \F_q[x, y]$, we define $$A\ast f=f(a_1x+a_2y, a_3x+a_4y).$$ 

Notice that, considering the diagonal elements of $\GL_2(\F_q)$, this is the action of $\mathcal H$ previously considered: for $A=\left(\begin{matrix}
a&0\\
0&b
\end{matrix}\right) \in\GL_2(\F_q)$ and $\A=(A(a), A(b))\in \mathcal H$, we have $A\ast f=\A\circ f$.

This is a classical group action in the {\it Invariant Theory of Finite Groups}. It can be verified that, for any $A\in \GL_2(\F_q)$, the composition $A\ast f$ preserves the degree of the homogeneous components of $f$: in fact, $\GL_2(\F_q)$ acts on the set $F_n$ of homogeneous polynomials of degree $n$ in $\F_q[x, y]$ via the compositions $A\ast f$. For more details, see Chapter 7 of \cite{Cox}.

In particular, if $\mathcal R_A$ denotes the ring of invariants and $\mathcal F_n(A)$ denotes the set of homogeneous invariants of degree $n$, $\mathcal R_A$ is a graded ring with the following grading:
$$\mathcal R_{A}=\bigoplus_{n\ge 0} \mathcal F_n(A).$$

What about the components $\mathcal F_n(A)$? Recall that any element $f\in F_n$ can be written uniquely as $y^ng(\frac{x}{y})$, for some $g(t)\in \F_q[t]$ of degree at most $n$. In particular, $A\ast f=f$ if and only if

$$(a_3x+a_4y)^ng\left(\frac{a_1x+a_2y}{a_3x+a_4y}\right)=y^ng\left(\frac{x}{y}\right).$$
In a similar way as before (setting $t=x/y$), we see that the previous equality is equivalent to the polynomial identity \begin{equation}\label{multiaction}(a_3t+a_4)^ng\left(\frac{a_1t+a_2}{a_3t+a_4}\right)=g(t).\end{equation}
If $a_3=a_2=0$, we are back to the diagonal case. For $a_3\ne 0$, $g$ must be of degree $n$ and Eq.~\eqref{multiaction} is equivalent to $A\diamond g=g$. In this case, we have an one-to-one correspondence between the elements of $\mathcal F_n(A)$ and the univariate polynomials $g$ of degree $n$ such that $A\diamond g=g$; clearly this induces a correspondence on the respective irreducible polynomials. 

For the case $a_2\ne 0$, note that any homogeneous $f\in F_n$ can be written uniquely as $x^nh\left(\frac{y}{x}\right)$, where $h(t)\in \F_q[t]$ is a polynomial of degree at most $n$:  in this case, we obtain a condition like Eq.~\eqref{multiaction}. In fact, we have an one-to-one correspondence between the elements of $\mathcal F_n(A)$ and the univariate polynomials $h$ of degree $n$ such that $\overline{A}\diamond h=h$, where $\overline{A}\in \GL_2(\F_q)$ is the matrix obtained after interchanging the arrows of $A$.

For instance, consider $\mathcal B=\left(\begin{matrix}
0&1\\
1&0
\end{matrix}\right)
$. Note that $\mathcal B\ast f=f(y, x)$; this is the only nontrivial permutation of the variables. The fixed-point subring comprises the {\it symmetric} polynomials in two variables. It is well known that, in this case, the fixed-point subring is generated by the symmetric polynomials $\sigma_1=x+y$ and $\sigma_2=xy$, i.e., $\mathcal R_{\mathcal B}=\F_q[x+y, xy]$. Here we may characterize the fixed-point subring by their grading components: since $\mathcal B\diamond g$ is the {\it reciprocal} of $g$, $\mathcal F_n(\mathcal B)$ comprises the polynomials of the form $y^ng(x/y)$, where $g(t)\in \F_q[t]$ is a {\it self-reciprocal} polynomial of degree $n$. 

In general, the study of Eq.~\eqref{multiaction} yields a characterization of the elements contained in each component $\mathcal F_n(A)$; from this characterization, we may find enumeration formulas for the number of (irreducible) polynomials in $\mathcal F_n(A)$.
\section{Conclusions}
We have noticed that, for  
$$G=\left\{\left(\begin{matrix}
a&b\\
0&1
\end{matrix}\right), a, b\in \F_q, a\ne 0 \right\},
$$ 
the group $G^n\subset \GL_2(\F_q)^n$ acts on the ring of polynomials in $n$ variables over $\F_q$. For $\A\in G^n$, we have explored the algebraic properties of the fixed-point subring $R_{\A}$. In particular, we have seen that $R_{\A}$ is always finitely generated as an $\F_q-$algebra and a minimal generating set $S_{\A}$ for $R_{\A}$ can be explicitly computed. We have given a criteria for when $R_{\A}$ is free, we have provided upper bounds for the size of $S_{\A}$ and characterized the elements $\A$ for which this bound is attained. In our approach, some algebraic structures of $R_{\A}$ are naturally related to other topics, such as the action studied in \cite{Gar11} and minimal product-one sequences in abelian groups.

\begin{center}{\bf Acknowledgments}\end{center}
This work was conducted during a scholarship supported by the Program CAPES-PDSE (process - 88881.134747/2016-01) at Carleton University. Financed by CAPES - Brazilian Federal
Agency for Support and Evaluation of Graduate Education within the Ministry of Education of Brazil.


\end{document}